\newtheorem{theorem}{Theorem}
\newtheorem{lemma}{Lemma}
\newcommand{\R}{\mathbb{R}}
\newcommand{\N}{\mathbb{N}}
\newcommand{\Normal}{\mathcal{N}}
\newcommand{\GOE}{\mathrm{GOE}}
\DeclareMathOperator{\var}{var}
\DeclareMathOperator{\cov}{cov}
\DeclareMathOperator{\corr}{corr}
\DeclareMathOperator{\disc}{disc}
\DeclareMathOperator{\tr}{tr}
\DeclareMathOperator{\diag}{diag}
\DeclareMathOperator{\rank}{rank}
\DeclareMathOperator{\symvec}{symvec}
\DeclareMathOperator{\row}{row}
\newcommand{\abs}[1]{\left\lvert\ifblank{#1}{\:\cdot\:}{#1}\right\rvert}
\newcommand{\norm}[1]{\left\lVert\ifblank{#1}{\:\cdot\:}{#1}\right\rVert}
\newcommand{\inp}[2]{\langle#1,#2\rangle}
\newcommand{\of}[1]{{\left(#1\right)}}
\let\oldexp\exp
\RenewDocumentCommand{\exp}{g}{%
\IfNoValueTF{#1}{\oldexp}%
{\,{\oldexp}\of{#1}}}
\NewDocumentCommand{\E}{o g}{%
\IfNoValueTF{#2}{\operatorname{\mathbb{E}}\IfValueT{#1}{_{#1}}}%
{\,{\operatorname{\mathbb{E}}\IfValueT{#1}{_{#1}}}\of{#2}}}
\RenewDocumentCommand{\P}{o g}{%
\IfNoValueTF{#2}{\operatorname{\mathbb{P}}\IfValueT{#1}{_{#1}}}%
{\,{\operatorname{\mathbb{P}}\IfValueT{#1}{_{#1}}}\of{#2}}}
\NewDocumentCommand{\ind}{g}{%
\IfNoValueTF{#1}{\operatorname{\mathbb{I}}}%
{\,\mathbb{I}\of{#1}}}
\begin{document}
    \title{Asymptotic Bounds and Online Algorithms for Average-Case Matrix Discrepancy}

    \author[1]{Dmitriy Kunisky\thanks{Email: \texttt{kunisky@jhu.edu}. Partially supported by ONR Award N00014-20-1-2335 and a Simons Investigator Award to Daniel Spielman.}}
    \affil[1]{\large Department of Applied Mathematics \& Statistics, Johns Hopkins University}

    \author{Timm Oertel\thanks{Email: \texttt{timm.oertel@fau.de}.}}
    \affil[2]{Department of Data Science, FAU Erlangen-N\"{u}rnberg}

    \author[2]{Nicola Wengiel\thanks{Email: \texttt{nicola.wengiel@fau.de}.}}

    \author[3]{Peiyuan Zhang\thanks{Email: \texttt{peiyuan.zhang@yale.edu}. Partially supported by a Yale University Fund to Amin Karbasi.}}

    \affil[3]{Department of Electrical Engineering, Yale University}

    \date{}

    \maketitle
    \thispagestyle{empty}

    \begin{abstract}
        We study the matrix discrepancy problem in the average-case setting. Given a sequence of $m \times m$ symmetric matrices $A_1,\ldots,A_n$, its discrepancy is defined as the minimal spectral norm over all signed sums $\sum_{i=1}^n x_iA_i$ with $x_1,\ldots,x_n \in \{\pm1\}$. Our contributions are twofold. First, we study the asymptotic discrepancy of random matrices. When the matrices belong to the Gaussian orthogonal ensemble, we provide a sharp characterization of the asymptotic discrepancy and show that the limiting distribution is concentrated around $\Theta(\sqrt{nm}4^{-(1 + o(1))n/m^2})$, under the assumption $m^2 \ll n/\log{n}$. We observe that the trivial bound $O(\sqrt{nm})$ cannot be improved when $n \ll m^2$ and show that this phenomenon occurs for a broad class of random matrices. In the case $n = \Omega(m^2)$, we provide a matching upper bound. Second, we analyse the matrix hyperbolic cosine algorithm, an online algorithm for matrix discrepancy minimization due to Zouzias~(2011), in the average-case setting. We show that the algorithm achieves with high probability a discrepancy of $O(m\log{m})$ for a broad class of random matrices, including Wigner matrices with entries satisfying a hypercontractive inequality and Gaussian Wishart matrices.
    \end{abstract}

    \clearpage

    \tableofcontents
    \thispagestyle{empty}

    \clearpage

    \section{Introduction}
    \pagenumbering{arabic}

    Discrepancy theory is a branch of combinatorics with applications in different fields of mathematics and computer science. For a thorough introduction to the topic, we refer the reader to the books by Chazelle \cite{Chazelle00} and Matoušek \cite{Matousek10}. Here we are mainly concerned with matrix discrepancy, a natural extension of vector discrepancy that has gained a lot of attention in recent years. We begin our exposition by providing a brief overview of vector and matrix discrepancy.

    \subsection{Literature overview}

    The classical vector discrepancy problem can be described as follows. Given a sequence of vectors $v_1,\ldots,v_n \in \R^m$ satisfying $\norm{v_i}_\infty \leq 1$, the goal is to find a signing $x \in \{\pm1\}^n$ that attains the minimal discrepancy
    \begin{equation*}
        \disc(v_1,\ldots,v_n) \colonequals \min_{x \in \{\pm1\}^n} \norm{\sum_{i=1}^n x_iv_i}_\infty,
    \end{equation*}
    where $\norm{y}_\infty \colonequals \max_{j \in [m]} |y_j|$ denotes the maximum norm. A standard union bound argument together with the Chernoff inequality yields an $O(\sqrt{n\log{m}})$ bound \cite{Matousek10}, achieved with high probability by $x_i \in \{\pm 1\}$ drawn independently and uniformly at random. In his seminal work, Spencer \cite{Spencer85} improved this bound to $O(\sqrt{n\log(2m/n)})$ when $n \leq m$. The case $n > m$ can be reduced to the case $n = m$ using an iterated rounding technique \cite{Beck81}, which leads to a general bound of the form
    \begin{equation}
        \label{eq:SpencerResult}
        \disc(v_1,\ldots,v_n) \leq C
        \begin{cases}
            \sqrt{n\log(2m/n)} & \text{if}~n \leq m, \\
            \sqrt{m} & \text{if}~n \geq m,
        \end{cases}
    \end{equation}
    for some constant $C > 0$. This bound is known to be tight up to the value of $C$ \cite{Alon16}. The above setting where the vectors are restricted to have $\ell^{\infty}$-norm at most one is only one of several interesting settings; other restrictions include sparse vectors, corresponding to the Beck-Fiala conjecture, and vectors with Euclidean norm at most one, corresponding to the Koml\'{o}s conjecture. Both conjectures are still open, but Spencer's result motivated the development of several powerful techniques to tackle these problems \cite{Banaszczyk98,Banaszczyk12,Giannopoulos97,Gluskin88}.

    Over the past few years, vector discrepancy has proven to be a useful tool with various applications. For example, it was used by Hoberg and Rothvoss \cite{Hoberg17} to give the best known approximation algorithm for bin packing, by Bansal, Charikar, Krishnaswamy and Li \cite{Bansal14} to improve rounding of linear programs for broadcast scheduling, and by Chandrasekaran and Vempala \cite{Vempala14} to study integer feasibility of random polytopes. Other problems like prefix discrepancy and vector balancing are closely related to vector discrepancy and have also benefited this progress, leading to even more applications \cite{Barany81,Eisenbrand18,Jansen19,Nikolov17,Sevastjanov94}.

    \textbf{Random instances.} While earlier results in discrepancy theory were mainly concerned with worst-case instances, the study of average-case instances has lately received more attention. A few works have also considered a smoothed analysis of discrepancy problems, where a small random pertubation is applied to the input \cite{Bansal22b,Bansal22c}. Here we focus on the purely probabilistic setting, in which essentially two random models have been considered.

    One line of work, initiated by Karmarkar, Karp, Lueker and Odlyzko \cite{Karmarkar86} and continued by Costello \cite{Costello09} and Turner, Meka and Rigollet \cite{Turner20}, studied the asymptotic discrepancy of $m$-dimensional standard Gaussian vectors $v_1,\ldots,v_n$. The work of Costello \cite{Costello09} considered the constant dimension regime $m = O(1)$. Its main result shows that the distribution of a random variable counting the number of low discrepancy solutions tends to a Poisson distribution. In particular, the author concluded that
    \begin{equation}
        \label{eq:CostelloResult}
        \lim_{n \to \infty} \P{\disc(v_1,\ldots,v_n) \leq \gamma\sqrt{\frac{\pi n}{2}}2^{-n/m}} = 1 - \exp{-2\gamma^m}
    \end{equation}
    for any $\gamma > 0$. The work of Turner, Meka and Rigollet \cite{Turner20} addressed the increasing dimension regime $m = \omega(1)$. Using the second moment method, they showed that
    \begin{equation}
        \label{eq:TurnerResult}
        \lim_{n \to \infty} \P{\beta\sqrt{\frac{\pi n}{2}}2^{-n/m} \leq \disc(v_1,\ldots,v_n) \leq \gamma\sqrt{\frac{\pi n}{2}}2^{-n/m}} = 1
    \end{equation}
    for any $\beta < 1 < \gamma$. In the latter result, it was further assumed that the number of vectors grows asymptotically faster than the dimension, that is, that $m = o(n)$. This is necessary to beat an $O(\sqrt{n})$ bound: for a fixed signing $x \in \{\pm1\}^n$, we have that $\P{\norm{\sum_{i=1}^n x_iv_i}_\infty \leq \delta} \leq O(\delta/\sqrt{n})^m$. If $m = \Omega(n)$, a first moment bound shows that $\P{\disc(v_1,\ldots,v_n) \leq \delta}$ tends to zero for $\delta = n^{1/2-\varepsilon}$. In the regime $n = \Theta(m)$, there is a much more detailed characterization of the typical discrepancy as a function of the aspect ratio $\lim_{n \to \infty} n/m$. This setting, up to changes of notation, goes by the name of the \emph{symmetric binary perceptron} model and is of interest in statistical physics and machine learning \cite{Abbe22,Altschuler23,Aubin19,Perkins21,Sah23}.

    Another line of work studied the discrepancy of $m$-dimensional random vectors $v_1,\ldots,v_n$ with independent Bernoulli distributed entries. After initial results by Hoberg and Rothvoss \cite{Hoberg19}, Franks and Saks \cite{Franks20} and Potukuchi \cite{Potukuchi18}, the final result due to Altschuler and Niles-Weed \cite{Altschuler22} shows that $\disc(v_1,\ldots,v_n) \leq 1$ with high probability for $n = \Omega(m\log{m})$. The hidden constant must be larger than $(2\log{2})^{-1}$, which is the threshold where the expected number of low discrepancy solutions becomes large; this constant cannot be improved without further assumptions.

    \textbf{Online setting.} Spencer's original proof of his result \cite{Spencer85} was based on the non-constructive method of partial coloring. The first polynomial-time algorithm computing a signing achieving the same discrepancy was given by Bansal~\cite{Bansal10}, and led to many further ideas for algorithms finding low-discrepancy signings \cite{Alweiss21,Bansal19a,Bansal19b,Bansal22a,Dadush16,Harshaw24,Liu22,Lovett15,Rothvoss17}. Beyond that, one may ask the more stringent question of whether \emph{online} algorithms can achieve the same discrepancy. In the online setting, an adversary picks a vector $v_t \in [-1,1]^m$ at each time $t = 1,\ldots,n$, and we must choose a sign $x_t \in \{-1,1\}$ irrevocably without knowledge of the upcoming vectors (i.e., of $v_{t^{\prime}}$ for $t^{\prime} > t$). The goal is to keep the discrepancy $\norm{\sum_{i=1}^n x_iv_i}_\infty$, or the prefix discrepancy $\max_{t=1,\ldots,n} \norm{\sum_{i=1}^t x_iv_i}_\infty$, as small as possible. The naive algorithm that picks $x_t \in \{-1,1\}$ at random already achieves an optimal discrepancy of $O(\sqrt{n\log{m}})$ with high probability, as shown by Spencer~\cite{Spencer94}. A lower bound of $\Omega(\sqrt{n})$ follows from a rather simple strategy for the adapative adversary, namely choosing the vector $v_t \in [-1,1]^m$ orthogonal to the current signed sum $\sum_{i=1}^{t-1} x_iv_i$.

    This hopeless situation has led to a shift of attention to other variations in which the power of the adversary is restricted. Here we focus on the stochastic model, where the vectors $v_1,\ldots,v_n$ are drawn independently from some distribution that is known to the online algorithm. For $v_1,\ldots,v_n$ uniformly drawn from $\{-1,1\}^m$, Bansal and Spencer \cite{Bansal20a} presented an online algorithm that achieves with high probability a discrepancy of $O(\sqrt{m})$ and a prefix discrepancy of $O(\sqrt{m}\log{n})$. This result is optimal up to constants when $n = \Theta(m)$, as demonstrated by Gamarnik, Kızıldağ, Perkins and Xu \cite{Gamarnik23}. For general distributions supported on $[-1,1]^m$, Bansal, Jiang, Singla and Sinha \cite{Bansal20b} showed a high probability bound of $O(m^2\log(mn))$ on the prefix discrepancy, which was improved by Bansal, Jiang, Meka, Singla and Sinha \cite{Bansal21} to $O(\sqrt{m}\log^4(mn))$. Compared with the bounds on the true discrepancy for average-case instances, these results show that online algorithms can achieve optimal or nearly optimal discrepancy for small $n = \Theta(m)$. However, these algorithms do not capture the decaying discrepancy once $n \gg m$. One intuition for this is that the cancellations possible in a long stream of vectors involve vectors far apart in the stream, but an online algorithm cannot backtrack and change the signs of vectors it has seen already to achieve these cancellations.

    Another assumption that may be viewed as lying in between the adaptive adversary and the stochastic model is the \emph{oblivious} adversary. Here the adversary is forced to fix the vectors in advance, while the online algorithm can use randomized strategies. For vectors $v_1,\ldots,v_n$ satisfying $\norm{v_i}_2 \leq 1$, Alweiss, Liu and Sawhney \cite{Alweiss21} showed that a simple self-balancing random walk can find signs so that all partial sums are $O(\sqrt{\log(mn)})$-subgaussian, and in particular which achieves a prefix discrepancy of $O(\log(mn))$ against an oblivious adversary with high probability. Kulkarni, Reis and Rothvoss \cite{Kulkarni24} improved this result by showing the existence of an online algorithm that decides signs so that all partial sums are $10$-subgaussian, and gives an optimal $O(\sqrt{\log{n}})$ discrepancy against an oblivious adversary.

    \textbf{Matrix discrepancy.} A natural generalization of vectors to matrices leads to the following problem. Given symmetric matrices $A_1,\ldots,A_n \in \R^{m \times m}$ satisfying $\norm{A_i} \leq 1$, the goal is to find a signing $x \in \{\pm1\}^n$ with minimal discrepancy
    \begin{equation*}
        \disc(A_1,\ldots,A_n) \colonequals \min_{x \in \{\pm1\}^n} \norm{\sum_{i=1}^n x_iA_i},
    \end{equation*}
    where $\norm{}$ denotes the spectral norm. The spectral norm of a matrix $A$ is defined as $\norm{A} \colonequals \max_{\norm{x}_2 = 1} \norm{Ax}_2$ and corresponds to the largest absolute value of its eigenvalues when $A$ is symmetric. As in the vector case, an $O(\sqrt{n\log{m}})$ bound can be obtained by a union bound and the matrix Chernoff inequality \cite{Tropp12}. It is conjectured that a matrix version of Spencer's discrepancy bound \cite{Spencer85} holds, that is, $\disc(A_1,\ldots,A_n) = O(\sqrt{m\log(2m/n)})$ when $n \leq m$. However, in the matrix setting, the iterated rounding technique only applies when $n > m^2$, and therefore we expect a general bound of the more complicated form
    \begin{equation}
        \label{eq:MatrixSpencerConjecture}
        \disc(A_1,\ldots,A_n) \leq C
        \begin{cases}
            \sqrt{n\log(2m/n)} & \text{if}~n \leq m, \\
            \sqrt{n} & \text{if}~m \leq n \leq m^2, \\
            m & \text{if}~n \geq m^2
        \end{cases}
    \end{equation}
    for some constant $C > 0$. Although the conjecture is still open, there has been considerable progress. Hopkins, Raghavendra and Shetty \cite{Hopkins22} proved the conjecture under the additional assumption that the matrices have Frobenius norm at most $O(n^{1/4})$. Dadush, Jiang and Reis \cite{Dadush22} established the conjecture for block-diagonal matrices with block size less than $O(n/m)$. More recently Bansal, Jiang and Meka \cite{Bansal23} proved the conjecture when the matrices have rank at most $O(n/\log^3{n})$.

    As noted by Bansal, Jiang and Meka \cite{Bansal23}, matrix discrepancy is closely related to random matrix theory, in particular to a detailed understanding of the random spectral norm of $\sum_{i=1}^n x_iA_i$ for a random signing $x \in \{\pm1\}^n$. Aside from being a theoretically exciting question, several applications of matrix discrepancy are known. For example, matrix discrepancy played an important role in the resolution of the Kadison-Singer problem due to Marcus, Spielman and Srivastava \cite{Marcus15}, was applied by Reis and Rothvoss \cite{Reis20} in the context of graph sparsification, and was shown by Hopkins, Raghavendra and Shetty \cite{Hopkins22} to have interesting connections with quantum communication.

    \subsection{Main results}

    In this work, alongside Maillard \cite{Maillard24}, we initiate the study of the matrix discrepancy problem for random instances. Motivated in part by the open matrix Spencer conjecture, our goal is to understand to what extent vector discrepancy results transfer to the matrix setting. Here we mainly focus on asymptotic bounds and online algorithms in the average-case setting.

    \textbf{Notation.} The asymptotic notations $O,\Omega,\Theta,o,\omega$ have their standard meaning and should be understood in the limit $n \to \infty$. We use the shorthand notation $a_n \lesssim b_n$ for $a_n \leq (1 + o(1))b_n$ and $a_n \approx b_n$ for $a_n = (1 + o(1))b_n$. The subscript of sequences is usually omitted. For example, when writing $a \approx 1$, this means that $a = a_n$ is a sequence that tends to one as $n$ goes to infinity. We use the Vinogradov notation $a \ll b$ to denote that $a \leq \varepsilon b$ for some sufficiently small $\varepsilon > 0$. For a natural number $n \in \N$, we denote $[n] \colonequals \{1, \dots, n\}$. We write $\delta_{ab}$ for the Kronecker delta, equal to 1 if $a = b$ and 0 otherwise.

    For a matrix $A \in \R^{m \times m}$, we denote its Froebnius norm by $\norm{A}_F \colonequals \sqrt{\tr(A^TA)}$ and its trace norm by $\norm{A}_* \colonequals \tr(\sqrt{A^TA})$. For two matrices $A,B \in \R^{m \times m}$, we denote its Frobenius inner product by $\inp{A}{B} \colonequals \tr(A^TB)$. For a symmetric matrix $A \in \R^{m \times m}$, we write $\lambda_1(A) \leq \ldots \leq \lambda_m(A)$ for its ordered eigenvalues and let $\lambda_{\min}(A) \colonequals \lambda_1(A)$, $\lambda_{\max}(A) \colonequals \lambda_m(A)$ denote its smallest and largest eigenvalues, respectively. Then $\norm{A} = \max_{j \in [m]} \abs{\lambda_j(A)}$ and $\norm{A}_F = (\sum_{j \in [m]} \lambda_j(A)^2)^{1/2}$. We define the symmetric vectorization of $A$ as the $m(m+1)/2$-dimensional vector with entries $a_{ii}$ for $1 \leq i \leq n$ and $\sqrt{2}a_{ij}$ for $1 \leq i < j \leq m$, and denote it by $\symvec(A)$. Note that $\inp{A}{B} = \inp{\symvec(A)}{\symvec(B)}$ and $\norm{A}_F = \norm{\symvec(A)}_2$.

    We denote the indicator function of an event $E$ by $\ind{E}$. The expectation (also called mean) and variance of a random variable $X$ are denoted by $\E{X}$ and $\var(X) \colonequals \E{(X - \E{X})^2}$, respectively. If there is no risk of confusion, we drop the brackets in the notation $\E{X}$ and simply write $\E X$. The covariance and correlation between two random variables $X$ and $Y$ are denoted by $\cov(X,Y) \colonequals \E (X - \E{X})(Y - \E{Y})$ and $\corr(X,Y) \colonequals \cov(X,Y)(\var(X)\var(Y))^{-1/2}$, respectively. For a $k$-dimensional random vector $X$, we denote its covariance matrix with entries $\cov(X_i,X_j)$ for $1 \leq i,j \leq k$ by $\cov(X)$.

    We write $\Normal(\mu, \sigma^2)$ for the scalar Gaussian distribution with mean $\mu$ and variance $\sigma^2$, and $\Normal(\mu, \Sigma)$ for the multivariate Gaussian distribution with mean vector $\mu$ and covariance matrix $\Sigma$.
    We write $\GOE(m)$ for the law of an $m \times m$ symmetric random matrix $X$ with $X_{ij} = X_{ji} \sim \Normal(0, 1 + \delta_{ij})$ independently for all $i \leq j$.
    This is the \emph{Gaussian orthogonal ensemble (GOE)}.
    For a sequence of events $E_n$ (possibly over different probability spaces depending on $n$), we say that the sequence holds \emph{with high probability} if $\P{E_n} \to 1$.

    \textbf{Asymptotic results.} The first part of our work is concerned with the asymptotic discrepancy of random matrices. Many interesting random matrix models (also called random matrix \emph{ensembles}) could be considered here. We restrict ourselves to symmetric square matrices $X = (X_{ij})_{1 \leq i,j \leq m}$. A basic model for symmetric random matrices is the \emph{Wigner matrix ensemble}, in which the upper diagonal entries $(X_{ij})_{i \leq j}$ are jointly independent, and $X_{ji} \colonequals X_{ij}$ below the diagonal \cite{Tao12}.

    One important example is $X \sim \GOE(m)$, as defined above.
    Apart from being one of the most commonly studied models in random matrix theory due to its orthogonal symmetry \cite{Anderson10,Tao12}, the GOE can also be viewed as a natural generalization of Gaussian random vectors to symmetric matrices. We consider the question of whether asymptotic results analogous to the analysis of Gaussian vectors of \eqref{eq:CostelloResult} and \eqref{eq:TurnerResult} can be established in the matrix setting. In view of the fact that the spectral norm of GOE matrices is concentrated in the range $[2\sqrt{m} - O(m^{-1/6}),2\sqrt{m} + O(m^{-1/6})]$ according to the Tracy-Widom limit theorem \cite{Soshnikov99}, one would expect that the asymptotic discrepancy of GOE matrices is $\Theta(\sqrt{nm}4^{-n/m^2})$ with high probability, where $4^{-n/m^2} = 2^{-2n/m^2}$ takes into account that the dimension of $m \times m$ symmetric matrices is $m(m + 1) / 2 \approx m^2/2$. The following result confirms this intuitive guess.

    \begin{theorem}
        \label{thm:GaussianDiscrepancy}
        Let $A_1,\ldots,A_n \sim \GOE(m)$ independently for some $m = m(n)$.
        \begin{enumerate}[label=(\alph*)]
            \item Assume that $\omega(1) = m^2 \ll n/\log{n}$. Then, for any constants $\beta < 1 < \gamma$,
            \begin{equation*}
                \lim_{n \to \infty} \P{\beta\frac{2}{e^{3/4}}\sqrt{nm}4^{-\xi n/m^2} < \disc(A_1,\ldots,A_n) \leq \gamma\frac{2}{e^{3/4}}\sqrt{nm}4^{-\xi n/m^2}} = 1,
            \end{equation*}
            where $\xi = \xi(n)$ is a sequence with $\xi(n) \to 1$ as $n \to \infty$.
            \label{thm:GaussianDiscrepancy1}
            \item Assume that $m = O(1)$. Then, for any constant $\beta < 1$,
            \begin{equation*}
                \lim_{n \to \infty} \P{\disc(A_1,\ldots,A_n) > \beta\frac{2}{e^{3/4}}\sqrt{nm}4^{-\xi n/m^2}} \geq 1 - \beta^{\xi^{-1}m^2/2},
            \end{equation*}
            and for any constant $\gamma > 1$,
            \begin{equation*}
                \lim_{n \to \infty} \P{\disc(A_1,\ldots,A_n) \leq \gamma\frac{2}{e^{3/4}}\sqrt{nm}4^{-\xi n/m^2}} \geq \frac{1}{1 + 2\gamma^{-\xi^{-1} m^2/2}},
            \end{equation*}
            where $\xi = \xi(n)$ is a sequence under the same assumption as in Part \ref{thm:GaussianDiscrepancy1}.
            \label{thm:GaussianDiscrepancy2}
        \end{enumerate}
    \end{theorem}

    The theorem demonstrates that the (random) value of $\disc(A_1,\ldots,A_n)$ concentrates around the value $2e^{-3/4}\sqrt{nm}4^{-\xi n/m^2}$. Notice that this quantity tends to zero if $m^2 \ll n/\log{n}$. For comparison, the spectral norm of an arbitrary signed sum $\sum_{i=1}^n x_iA_i$ for some fixed $x_i$ is $\Theta(\sqrt{nm})$ with high probability. This highlights that drastic cancellations are possible if the number of matrices $n$ grows fast enough. We further remark that the assumption $m^2 \ll n/\log{n}$ in Part~\ref{thm:GaussianDiscrepancy1} is only required for the upper bound. We leave open the question of whether the upper bound also holds in the intermediate regime $\Omega(n/\log{n}) = m^2 = o(n)$. That the assumption $m^2 = o(n)$ is necessary to improve on the trivial bound $\Theta(\sqrt{nm})$ can be justified as follows: for a fixed signing $x \in \{\pm1\}^n$, we have that $\P{\norm{\sum_{i=1}^n x_iA_i}_\infty \leq \delta} \leq O(\delta/\sqrt{nm})^{(1 + o(1))m^2/2}$ by \cref{lem:SmallNormProbability}. Then, if $m^2 = \Omega(n)$, a first moment bound shows that $\P{\disc(A_1,\ldots,A_n) \leq \delta}$ tends to zero for $\delta = n^{1/2-\varepsilon}m^{1/2}$.

    One may interpret \cref{thm:GaussianDiscrepancy} as a matrix version of the asymptotic discrepancy results for Gaussian vectors discussed above, where Part~\ref{thm:GaussianDiscrepancy1} is the analog of result~\eqref{eq:TurnerResult} in the increasing dimension regime $m = \omega(1)$ and Part~\ref{thm:GaussianDiscrepancy2} is the analog of result~\eqref{eq:CostelloResult} in the constant dimension regime $m = O(1)$. However, unlike Costello's result \eqref{eq:CostelloResult}, we are not able to obtain the exact limiting distribution in the constant dimension regime. This is because our techniques, essentially an application of the second moment method, do not exploit higher moments.

    We organize the proof of \cref{thm:GaussianDiscrepancy} as follows: in \cref{sec:Preliminaries} we begin with some preliminary results on GOE matrices, and then give the proof based on the second moment method in \cref{sec:GaussianDiscrepancy}.

    More detailed information on the regime $n = \Theta(m^2)$ can be found in the parallel work of Maillard \cite{Maillard24}, which establishes a satisfiability transition in terms of the aspect ratio $\tau = \lim_{n \to \infty} n/m^2$. For the interesting regime $\kappa < 2$, the author identified functions $\tau_1(\kappa) ,\tau_2(\kappa)$ such that
    \begin{equation}
        \label{eq:MaillardLowerBound}
        \lim_{n \to \infty} \P{\disc(A_1,\ldots,A_n) > \kappa\sqrt{nm}} = 1
    \end{equation}
    if $\tau < \tau_1(\kappa)$, and
    \begin{equation}
        \label{eq:MaillardUpperBound}
        \lim_{n \to \infty} \P{\disc(A_1,\ldots,A_n) \leq \kappa\sqrt{nm}} = 1
    \end{equation}
    if $\tau > \tau_2(\kappa)$. The existence of a sharp threshold function $\tau_c$, with the property that $\disc(A_1,\ldots,A_n)$ is concentrated around the single value $\kappa\sqrt{nm}$ when $\tau = \tau_c(\kappa)$, follows from Theorem 7 of Altschuler \cite{Altschuler24}, and the above bounds locate it in the interval $\tau_1(\kappa) \leq \tau_c(\kappa) \leq \tau_2(\kappa)$.

    The lower bound \eqref{eq:MaillardLowerBound} illustrates once again that $\disc(A_1,\ldots,A_n) = \Omega(\sqrt{nm})$ when $n \ll m^2$.
    Our next result shows that this behavior in fact holds for a broad class of random matrices, under two conditions. The first condition is a uniform subexponential tail inequality for the linear forms $\inp{X}{Y}$ with respect to symmetric matrices $Y \in \R^{m \times m}$. To state this condition formally, we use the concept of the $\psi_r$-norm. Recall that the $\psi_r$-norm of a random variable $X$ is defined as
    \begin{equation*}
        \norm{X}_{\psi_r} \colonequals \inf\{C > 0 : \E\exp{\abs{X}^r/C^r} \leq 2\}.
    \end{equation*}
    A random variable with finite $\psi_1$-norm is called \emph{subexponential} and a random variable with finite $\psi_2$-norm is called \emph{subgaussian}; as is well-known, bounds on these norms are equivalent to subexponential and subgaussian tail bounds, respectively.
    The concept of the $\psi_r$-norm can be extended to $m$-dimensional random vectors $X$ by considering scalar projections
    \begin{equation*}
        \norm{X}_{\psi_r} \colonequals \sup_{\norm{Y}_2 = 1} \norm{\inp{X}{Y}}_{\psi_r}.
    \end{equation*}
    Likewise, we generalize the $\psi_r$-norm to $m \times m$ random matrices $X$ by
    \begin{equation*}
        \norm{X}_{\psi_r} \colonequals \norm{\symvec(X)}_{\psi_r} = \sup_{\norm{Y}_F = 1} \norm{\inp{X}{Y}}_{\psi_r}.
    \end{equation*}
    Our first condition below is a uniform bound on the $\psi_1$-norm.
    The second condition is a weaker concentration inequality for the Frobenius norm,
    \begin{equation}
        \label{eq:NormConcentration}
        \P{\abs{\frac{1}{m^2}\norm{X}_F^2 - 1} \geq \frac{1}{2}} = o\of{\frac{1}{m^2}}.
    \end{equation}

    \begin{theorem}
        \label{thm:GeneralDiscrepancy}
        Assume that $A_1,\ldots,A_n$ is a sequence of independent centered $m \times m$ random symmetric matrices, for some $m = m(n)$, that have $\|A_i\|_{\psi_1} \leq \psi$ for all $i \in [n]$ and satisfy Condition~\eqref{eq:NormConcentration}. Then, there exist constants $C_1,C_2 > 0$ such that for $n \ll m^2$,
        \begin{equation*}
            \lim_{n \to \infty} \P{C_1\sqrt{nm} \leq \disc(A_1,\ldots,A_n) \leq C_2m^{3/2}} = 1.
        \end{equation*}
        In particular, when $n = \Theta(m^2)$, we have that $\disc(A_1,\ldots,A_n) = \Theta(m^{3/2})$ with high probability as $n \to \infty$.
    \end{theorem}

    The proof of \cref{thm:GeneralDiscrepancy} can be found in \cref{sec:GeneralDiscrepancy}.
    We next discuss two applications of \cref{thm:GeneralDiscrepancy} to specific matrix ensembles. We first consider Wigner matrices of the form $A = X + X^T$, where $X$ is an $m \times m$ random matrix with independent and identically distributed subgaussian entries. It follows from Lemma 3.4.2 of \cite{Vershynin18} that $\symvec(A)$ is a subgaussian vector with $\psi_2$-norm bounded by a constant $C > 0$ depending only on the entry distribution. In particular, $\norm{\symvec(A)}_{\psi_1} \leq \norm{\symvec(A)}_{\psi_2} \leq C$. Furthermore, Theorem 3.1.1 of \cite{Vershynin18} implies that $\|\symvec(A)\|_F^2$ is concentrated around $m^2$. Since $\norm{A}_{\psi_1} = \norm{\symvec(A)}_{\psi_1}$ and $\norm{A}_F = \norm{\symvec(A)}_2$, a sequence $A_1,\ldots,A_n$ of $n$ independent copies of $A$ meets the requirements of \cref{thm:GeneralDiscrepancy} and the conclusion of the theorem applies. In particular, it applies to the case where $A_1,\ldots,A_n \sim \GOE(m)$.

    Our second application concerns Gaussian Wishart random matrices. We say that $W$ is a \emph{Wishart matrix} with rank $r = r(n) \leq m$ if it is of the form $W = GG^T$ for some $m \times r$ matrix $G$ with independent standard Gaussian entries. Note that $\rank(W) = r$ by construction. Unfortunately, \cref{thm:GeneralDiscrepancy} is not directly applicable in this situation, as Wishart matrices are not centered. We present a workaround in \cref{sec:GeneralDiscrepancy}, which leads to the following result.

    \begin{theorem}
        \label{thm:WishartDiscrepancy}
        Let $W_1,\ldots,W_n$ be a sequence of independent $m \times m$ Wishart matrices with rank $r \leq m$. There exist constants $C_1,C_2 > 0$ such that, for $n \ll m^2$,
        \begin{equation*}
            \lim_{n \to \infty} \P{C_1\sqrt{rnm} \leq \disc(A_1,\ldots,A_n) \leq C_2\sqrt{rm^3}} = 1.
        \end{equation*}
        In particular, when $n = \Theta(m^2)$, we have that $\disc(A_1,\ldots,A_n) = \Theta(\sqrt{rm^3})$ with high probability as $n \to \infty$.
    \end{theorem}

    \textbf{Matrix hyperbolic cosine algorithm.} In the second part of our work, we study the matrix hyperbolic cosine (MHC) algorithm given in \cref{alg:MatrixHyperbolicCosine}, which is a matrix version of an online algorithm for vector discrepancy that was introduced by Spencer \cite{Spencer77} and studied in the average-case setting by Bansal and Spencer \cite{Bansal20a}. In short, the matrix version of the algorithm maintains a single matrix $M$ of the current signed sum and in each step picks a sign in order to minimize the potential function $\tr\cosh(\alpha M)$, where $\alpha$ is an appropriately chosen parameter. The algorithm for matrices was first introduced and studied by Zouzias \cite{Zouzias12}.

    \begin{algorithm}
        \caption{Matrix hyperbolic cosine (MHC) algorithm}
        \label{alg:MatrixHyperbolicCosine}
        \begin{algorithmic}
            \Require{Sequence of matrices $A_1,\ldots,A_n \in \R^{m \times m}$ and parameter $\alpha > 0$.}
            \Ensure{Sequence of signs $x_1,\ldots,x_n \in \{\pm1\}$.}
            \State{Initialize $M \gets 0$.}
            \For{$t = 1,\ldots,n$}
                \State{Choose $x_t \in \{\pm1\}$ to minimize $\tr\cosh(\alpha(M + x_tA_t))$.}
                \State{Update $M \gets M + x_tA_t$.}
            \EndFor
            \State{\Return $x_1,\ldots,x_n$.}
        \end{algorithmic}
    \end{algorithm}

    The analysis of Zouzias \cite{Zouzias12} gives a bound of $O(\sqrt{n\log{m}})$ on the discrepancy achieved by the MHC algorithm. This is much larger than the bound suggested by the matrix Spencer conjecture when $n$ is large, and indeed is just the same as the non-commutative Khintchine inequality implies for random signs \cite{Lust91}. In fact, the purpose of Zouzias in analyzing the MHC algorithm was precisely to achieve the performance of random signs with a deterministic algorithm. Here we are instead interested in a sharper characterization of the algorithm's performance for random inputs. We introduce two conditions on random matrix distributions that allow us to establish upper bounds on the discrepancy achieved by the MHC algorithm. In these conditions, we use an additional parameter $r$, which should be thought of as the rank, although it can be chosen arbitrarily in order to fulfill the conditions.

    The first condition is an anti-concentration inequality, analogous to such conditions for random vectors. We say that an $m \times m$ random symmetric matrix $A$ satisfies the \emph{matrix anti-concentration inequality} with parameter $\eta > 0$ if
    \begin{equation}
        \label{eq:MatrixAntiConcentration}
        \E\abs{\inp{X}{A}} \geq \eta\sqrt{\frac{r}{m^3}}\norm{X}_*
    \end{equation}
    for all symmetric matrices $X \in \R^{m \times m}$. We will only ever prove a matrix anti-concentration condition through a stronger Khintchine-like inequality \eqref{eq:KhintchineAntiConcentration}. We state our main result in terms of this weaker condition to draw a parallel with \cite{Bansal20b} where such a condition was used in the vector case to prove weaker discrepancy results over a broader range of distributions.

    The second condition is a quantitative isotropy condition for the row space. The row space of a matrix $A$ is the span of its row vectors and denoted by $\row(A)$. We say that an $m \times m$ random matrix $A$ is unbiased with parameter $\theta > 0$ if
    \begin{equation}
        \label{eq:Unbiasedness}
        \norm{\E P_{\row(A)}} \geq \theta\frac{r}{m},
    \end{equation}
    where $P_V$ denotes the matrix of the orthogonal projection onto a subspace $V$. For intuition, note that the unbiasedness condition is only non-trivial when $r \ll m$ so that $A$ is low-rank. If $V$ is a uniformly distributed random subspace of dimension $r$, then by symmetry we have $\E P_V = \frac{r}{m}I_m$. The unbiasedness condition therefore says that the distribution of $\row(A)$ is quantitatively close to the uniform distribution on $r$-dimensional subspaces.

    \begin{theorem}
        \label{thm:GeneralAnalysis}
        Assume that $A$ is an $m \times m$ random symmetric matrix with $\norm{A} \leq 1$ that satisfies Conditions \eqref{eq:MatrixAntiConcentration} and \eqref{eq:Unbiasedness} for parameters $\eta,\theta > 0$. Let $A_1,\ldots,A_n$ be a sequence of $n$ independent copies of $A$, and let $x_1,\ldots,x_n \in \{\pm1\}$ denote the signs produced by \cref{alg:MatrixHyperbolicCosine} when run with parameter $\alpha \ll (rm)^{-1/2}$. Then,
        \begin{enumerate}[label=(\alph*)]
            \item with probability at least $1 - n^{-1}$,
            \begin{equation*}
                \max_{t=1,\ldots,n} \norm{\sum_{i=1}^t x_iA_i} \leq O(\sqrt{rm}\log{n}),
            \end{equation*}
            \label{thm:GeneralAnalysis1}
            \item with probability at least $1 - m^{-1}$,
            \begin{equation*}
                \norm{\sum_{i=1}^n x_iA_i} \leq O(\sqrt{rm}\log{m}),
            \end{equation*}
            \label{thm:GeneralAnalysis2}
        \end{enumerate}
        where the implicit constants depend only on the parameters $\eta$ and $\theta$.
    \end{theorem}

    Note that Part~\ref{thm:GeneralAnalysis1} of \cref{thm:GeneralAnalysis} provides a bound on the prefix discrepancy (that is, the maximal spectral norm of all partial sums), while Part~\ref{thm:GeneralAnalysis2} only bounds the discrepancy. The proof of \cref{thm:GeneralAnalysis} is given in \cref{sec:GeneralAnalysis}. Applications of \cref{thm:GeneralAnalysis} to specific matrix ensembles are discussed in \cref{sec:Applications} and include GOE matrices, general Wigner matrices with subgaussian entries, and Wishart matrices.

    \section{Preliminaries}
    \label{sec:Preliminaries}

    In this section, we collect relevant results on the Gaussian orthogonal ensemble. We begin with a quick reminder about the multivariate normal distribution. The density of $X = (X_1,\ldots,X_k) \sim \Normal(\mu, \Sigma)$ is given by
    \begin{equation}
        \label{eq:MultivariateNormalDistribution}
        x \mapsto \frac{1}{\sqrt{(2\pi)^k\det(\Sigma)}} \exp{-\frac{(x - \mu)^T\Sigma^{-1}(x - \mu)}{2}}, \quad x \in \R^k.
    \end{equation}
    In particular, the density of two jointly normal random variables $X,Y$ with mean zero, variance $\sigma^2 > 0$ and correlation $\rho \in (-1,1)$ is given by
    \begin{equation}
        \label{eq:BivariateNormalDistribution}
        (x,y) \mapsto \frac{1}{2\pi\sigma^2\sqrt{1 - \rho^2}} \exp{-\frac{x^2 - 2\rho xy + y^2}{2\sigma^2(1 - \rho^2)}}, \quad x,y \in \R.
    \end{equation}

    \textbf{Gaussian orthogonal ensemble.}
    Recall that $X \sim \GOE(m)$ if $X_{ij} = X_{ji} \sim \Normal(0, 1 + \delta_{ij})$ independently for $i \leq j$.
    GOE matrices are so called because they are invariant under orthogonal transformations, that is, if $Q$ is orthogonal, then $QXQ^T$ has the same distribution as $X$.
    Another remarkable fact is that the density of the eigenvalues of a GOE matrix can be written in closed form.
    We will use this to determine the asymptotics of the probability that a GOE matrix has small spectral norm.
    Using the expression for the normal density in \eqref{eq:MultivariateNormalDistribution}, the following is easy to derive.

    \begin{lemma}[Equation 2.5.1 in \cite{Anderson10}]
        \label{lem:GaussianMatrix1}
        The probability density function of the measure $\GOE(m)$ is given by
        \begin{equation*}
            X \mapsto K_m \exp{-\frac{1}{4}\tr(X^2)}
        \end{equation*}
        on the space of $m \times m$ symmetric matrices, where the normalization constant $K_m$ is defined as
        \begin{equation}
            \label{eq:NormalizationConstant1}
            K_m \colonequals 2^{-m/2}(2\pi)^{-m(m+1)/4}.
        \end{equation}
    \end{lemma}

    Since the trace is invariant under orthogonal transformations, the orthogonal invariance of the GOE follows immediately.
    We introduce some further notation. For $\lambda \in \R^m$ we define the \emph{Vandermonde determinant} by
    \begin{equation*}
        \Delta(\lambda) \colonequals \prod_{1 \leq i < j \leq m} (\lambda_j - \lambda_i),
    \end{equation*}
    and for $z > 0$ we define the Gamma function by
    \begin{equation*}
        \Gamma(z) \colonequals \int_0^\infty x^{z-1} e^{-x} dx.
    \end{equation*}
    The next lemma describes the joint distribution of the eigenvalues in the Gaussian orthogonal ensemble.

    \begin{lemma}[Theorem 2.5.2 in \cite{Anderson10}]
        \label{lem:GaussianEigenvalues1}
        Let $X \sim \GOE(m)$. The joint probability density function of the ordered eigenvalues $\lambda_1(X) \leq \ldots \leq \lambda_m(X)$ is given by
        \begin{equation*}
            \lambda \mapsto C_m \exp{-\frac{1}{4}\norm{\lambda}_2^2} \Delta(\lambda)
        \end{equation*}
        on the Weyl chamber $\R^m_\geq \colonequals \{\lambda \in \R^m : \lambda_1 \leq \ldots \leq \lambda_m\}$, where the constant $C_m$ is defined as
        \begin{equation}
            \label{eq:NormalizationConstant2}
            C_m \colonequals 2^{-m(m+3)/4} \prod_{i=1}^m \frac{1}{\Gamma(i/2)}.
        \end{equation}
    \end{lemma}

    For further details on GOE matrices, we refer to the book by Anderson, Guionnet and Zeitouni \cite{Anderson10}. We now turn to studying correlated GOE matrices, which arise naturally in our approach of using the second moment method.

    \textbf{Correlated GOE matrices.} We say that two GOE matrices $X$ and $Y$ have a correlation of $\rho \in [-1,1]$ if $\corr(X_{ij},Y_{ij}) = \rho$ for all $i \leq j$.
    We always assume that the entries of $X$ and $Y$ are jointly normally distributed (as will always be the case in our applications since the pairs of GOE matrices we consider will arise from linear combinations of independent GOE matrices).
    We have the following versions of the above results for the joint distribution of $(X, Y)$.

    \begin{lemma}
        \label{lem:GaussianMatrix2}
        The joint probability density function of two $m \times m$ GOE matrices with correlation $\rho \in (-1,1)$ is given by
        \begin{equation*}
            (X,Y) \mapsto K_{m}^2 (1 - \rho^2)^{-m(m+1)/4} \exp{-\frac{\tr(X^2 - 2\rho XY + Y^2)}{4(1 - \rho^2)}}
        \end{equation*}
        on the space of pairs of $m \times m$ symmetric matrices, where the constant $K_m$ is defined as in \eqref{eq:NormalizationConstant1}.
    \end{lemma}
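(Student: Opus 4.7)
The plan is to apply the explicit bivariate normal density from \eqref{eq:NormalDistribution2} entry by entry and then re-package the product in matrix-trace form. Since $X$ and $Y$ are both GOE matrices with jointly normal entries and correlation $\rho$, the pairs $(X_{ij}, Y_{ij})$ for $i \leq j$ constitute a family of independent bivariate normal vectors: for $i < j$ each has mean zero, common variance $\sigma^2 = 1$, and correlation $\rho$; for $i = j$ the common variance is $\sigma^2 = 2$ and the correlation is again $\rho$. The joint density of $(X, Y)$ therefore factors as
\begin{equation*}
    f(X,Y) = \prod_{i<j} \frac{1}{2\pi\sqrt{1-\rho^2}} \exp{-\frac{X_{ij}^2 - 2\rho X_{ij}Y_{ij} + Y_{ij}^2}{2(1-\rho^2)}} \cdot \prod_i \frac{1}{4\pi\sqrt{1-\rho^2}} \exp{-\frac{X_{ii}^2 - 2\rho X_{ii}Y_{ii} + Y_{ii}^2}{4(1-\rho^2)}}.
\end{equation*}

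Next I would deal with the constant. There are $m(m-1)/2$ off-diagonal pairs and $m$ diagonal pairs, so collecting the prefactors gives $(2\pi)^{-m(m-1)/2}(4\pi)^{-m}(1-\rho^2)^{-m(m+1)/4} = 2^{-m}(2\pi)^{-m(m+1)/2}(1-\rho^2)^{-m(m+1)/4}$, which matches $K_m^2(1-\rho^2)^{-m(m+1)/4}$ by the definition of $K_m$ in \eqref{eq:NormalizationConstant1}.

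For the exponent, the key identity is that for any symmetric matrix $X$ one has $\tr(X^2) = \sum_i X_{ii}^2 + 2\sum_{i<j} X_{ij}^2$, and by symmetry of both $X$ and $Y$ also $\tr(XY) = \sum_i X_{ii}Y_{ii} + 2\sum_{i<j}X_{ij}Y_{ij}$. Substituting these into $\tr(X^2 - 2\rho XY + Y^2)$ and dividing by $4(1-\rho^2)$ produces exactly the sum of the diagonal exponents (with denominator $4(1-\rho^2)$) and the off-diagonal exponents (where the factor of $2$ in front absorbs the difference, giving denominator $2(1-\rho^2)$). Combining the constant and exponent pieces yields the claimed formula.

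There is no real obstacle here beyond careful bookkeeping: one only has to keep track of the diagonal-versus-off-diagonal variance discrepancy (variance $2$ versus $1$) and make sure the factor $2$ picked up when converting the off-diagonal sums to traces matches the halving of the denominator in \eqref{eq:NormalDistribution2}. The hypothesis $\rho \in (-1,1)$ is used only to ensure that the bivariate densities in \eqref{eq:NormalDistribution2} are well-defined, i.e., that the covariance matrix of each pair $(X_{ij},Y_{ij})$ is nonsingular.
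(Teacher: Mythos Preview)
Your proposal is correct and follows essentially the same approach as the paper: factor the joint density over the independent entry pairs using \eqref{eq:NormalDistribution2}, split into diagonal and off-diagonal contributions, collect the constants to recover $K_m^2(1-\rho^2)^{-m(m+1)/4}$, and then use symmetry to rewrite the exponent as $\tr(X^2 - 2\rho XY + Y^2)/(4(1-\rho^2))$. The only cosmetic difference is that the paper passes through the full sum $\sum_{i,j}(x_{ij}^2 - 2\rho x_{ij}y_{ij} + y_{ij}^2)$ before identifying it as a trace, whereas you invoke the identities $\tr(X^2) = \sum_i X_{ii}^2 + 2\sum_{i<j}X_{ij}^2$ and $\tr(XY) = \sum_i X_{ii}Y_{ii} + 2\sum_{i<j}X_{ij}Y_{ij}$ directly.
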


    \begin{lemma}
        \label{lem:GaussianEigenvalues2}
        Let $X,Y$ be two $m \times m$ GOE matrices with correlation $\rho \in (-1,1)$. The joint probability density function of their ordered eigenvalues $\lambda_1(X) \leq \ldots \leq \lambda_m(X)$ and $\mu_1(X) \leq \ldots \leq \mu_m(X)$ is bounded above by
        \begin{equation}
            \label{eq:GaussianEigenvalues}
            (\lambda,\mu) \mapsto C_m^2 (1 - \rho^2)^{-m(m+1)/4} \exp{-\frac{\norm{\lambda}_2^2 - 2\rho\inp{\lambda}{\mu} + \norm{\mu}_2^2}{4(1 - \rho^2)}} \Delta(\lambda)\Delta(\mu)
        \end{equation}
        on $\R^m_\geq \times \R^m_\geq$, where the constant $C_m$ is defined as in \eqref{eq:NormalizationConstant2}.
    \end{lemma}

    The proofs of \cref{lem:GaussianMatrix2} and \cref{lem:GaussianEigenvalues2} can be found in \cref{sec:Appendix}; we refrain from a full proof of \cref{lem:GaussianEigenvalues2}, since it would make the paper significantly longer, but for the sake of illustration include a proof of the two-dimensional case.

    \textbf{Small norm probability.} Studying the discrepancy of GOE matrices naturally leads to the task of quantifying the probability $\P{\norm{X} \leq \delta}$ for $X \sim \GOE(m)$. For $\delta = \Omega(\sqrt{m})$, reasonable bounds can be obtained via standard concentration results for the spectral norm. But in our case, $\delta/\sqrt{m}$ will tend to zero and a suitable bound must be obtained by other means.

    \begin{lemma}
        \label{lem:SmallNormProbability}
        Let $X \sim \GOE(m)$. For $\delta = o(\sqrt{m})$, we have
        \begin{equation*}
            \P{\norm{X} \leq \delta} = \left(\frac{e^{3/4}}{2\sqrt{m}}\delta\right)^{(1 + o(1))m^2/2}.
        \end{equation*}
    \end{lemma}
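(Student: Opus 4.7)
The plan is to apply the joint eigenvalue density from \cref{lem:GaussianEigenvalues1} and lower bound the resulting integral by restricting to a carefully chosen box around a near-optimal configuration in $[-\delta,\delta]$. Since $\norm{X} \leq \delta$ corresponds to all eigenvalues of $X$ lying in $[-\delta,\delta]$, we have
\begin{equation*}
    \P{\norm{X} \leq \delta} = C_m \int_{\mathcal{D}} \exp{-\tfrac{1}{4}\norm{\lambda}_2^2} \Delta(\lambda) \, d\lambda,
\end{equation*}
where $\mathcal{D} = \{\lambda \in [-\delta,\delta]^m : \lambda_1 \leq \ldots \leq \lambda_m\}$. Substituting $\lambda_i = \delta \mu_i$ rewrites this as $C_m \delta^{m(m+1)/2}$ times an integral of $\exp{-\delta^2\norm{\mu}_2^2/4}\,\Delta(\mu)$ over the ordered tuples of $[-1,1]^m$. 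Since $\delta = o(\sqrt{m})$, we have $\delta^2\norm{\mu}_2^2/4 \leq \delta^2 m/4 = o(m^2)$, so the Gaussian factor contributes only a negligible $\exp{-o(m^2)}$ and can effectively be dropped.

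For the remaining Vandermonde integral, I would localize near the Chebyshev nodes $\mu^*_i = \cos((2(m-i)+1)\pi/(2m))$, which are ordered in $[-1,1]$ with minimum consecutive gap $\gamma = \Theta(1/m^2)$. Restricting to the box $B = \prod_i[\mu^*_i - \gamma/(4m), \mu^*_i + \gamma/(4m)]$ ensures that every $\mu \in B$ remains ordered and satisfies $\mu_j - \mu_i \geq (1 - 1/m)(\mu^*_j - \mu^*_i)$ for each pair, so $\Delta(\mu) \geq (1 - 1/m)^{\binom{m}{2}}\Delta(\mu^*)$, losing only a factor $\exp{-O(m)}$. The box has volume $(\gamma/(2m))^m$, contributing $-O(m\log{m})$ on the log scale. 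Since $[-1,1]$ has logarithmic capacity $1/2$, standard potential theory gives $\log{\Delta(\mu^*)} = -\tfrac{1}{2}m^2\log{2} + o(m^2)$, and Stirling's formula applied to $\log{C_m} = -\tfrac{1}{4}m(m+3)\log{2} - \sum_{i=1}^m \log{\Gamma(i/2)}$ yields $\log{C_m} = -\tfrac{1}{4}m^2\log{m} + \tfrac{3}{8}m^2 + o(m^2)$. The $\tfrac{3}{8}m^2$ here is precisely what will produce the constant $e^{3/4}$.

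Assembling all pieces gives
\begin{equation*}
    \log{\P{\norm{X} \leq \delta}} \geq \frac{3m^2}{8} - \frac{m^2\log{m}}{4} - \frac{m^2\log{2}}{2} + \frac{m^2\log{\delta}}{2} + o(m^2) = \frac{m^2}{2}\log{\frac{e^{3/4}\delta}{2\sqrt{m}}} + o(m^2),
\end{equation*}
and exponentiating yields the claimed bound (the $o(m^2)$ error is absorbed into the $(1+o(1))$ exponent because $|\log(e^{3/4}\delta/(2\sqrt{m}))| \to \infty$ under the assumption $\delta = o(\sqrt{m})$). The main obstacle is matching the two explicit constants exactly: the $+\tfrac{3}{8}m^2$ in $\log{C_m}$ from Stirling applied to Gamma functions at half-integer arguments, and the $-\tfrac{1}{2}m^2\log{2}$ in $\log{\Delta(\mu^*)}$ from the equilibrium measure of $[-1,1]$. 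This requires keeping the perturbation loss on the Vandermonde and the volume contribution of the box each within $o(m^2)$, which is what forces the box side to be of order $1/m^3$ (one power smaller than the minimum Chebyshev gap); if either estimate leaked an $\Omega(m^2)$ term, the constants $e^{3/4}$ or $1/2$ would be wrong.
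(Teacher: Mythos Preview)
Your approach is correct and reaches the same leading constants, but it differs from the paper's in how the Vandermonde integral is handled. The paper rescales to $[0,1]^m$ and evaluates $\int_{[0,1]^m}|\Delta(\lambda)|\,d\lambda$ \emph{exactly} via Selberg's integral formula, then applies Stirling-type bounds to the resulting double product of Gamma functions to extract the $-\tfrac14 m^2\log m - \tfrac34 m^2\log 2 + \tfrac38 m^2$ asymptotic. Your route instead lower-bounds the integral by localizing to a box around the Chebyshev nodes and invokes the logarithmic capacity of $[-1,1]$ to get $\log\Delta(\mu^*)=-\tfrac12 m^2\log 2+o(m^2)$; only the normalization constant $C_m$ requires Stirling. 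Both methods land on the same two numerical inputs ($\tfrac38 m^2$ from $C_m$ and $-\tfrac12 m^2\log 2$ from the Vandermonde part), so the final exponent matches. The paper's argument is cleaner to audit because Selberg gives a closed form and all estimation is pushed into a single Gamma-product computation; your argument is softer and does not rely on an integral identity, so it would transfer more readily to ensembles where no Selberg-type formula is available. Two small points worth tightening in a write-up: the asymptotic $\log\Delta(\mu^*)=-\tfrac12 m^2\log 2+o(m^2)$ for Chebyshev nodes can be stated as an exact identity (via $\prod_{j\neq k}|\mu_k^*-\mu_j^*|=m/(2^{m-1}\sin\theta_k)$ and $\prod_k\sin\theta_k=2^{1-m}$), which is sharper than a potential-theory citation; and the error ``$+o(m^2)$'' in your final display should really be ``$+o\bigl(m^2|\log(\delta/\sqrt{m})|\bigr)$'' since the stray $\tfrac{m}{2}\log\delta$ from $\delta^{m(m+1)/2}$ need not be $o(m^2)$ for very small $\delta$, though it is always $O(1/m)$ relative to the main term and hence still absorbed into the $(1+o(1))$ exponent.
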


    The proof of \cref{lem:SmallNormProbability} appears in \cref{sec:Appendix}. We remark that \cref{lem:SmallNormProbability} implies the existence of a sequence $\xi = \xi(m)$ that tends to one as $m$ goes to infinity such that
    \begin{equation}
        \label{eq:SmallNormProbability}
        \P{\norm{X} \leq \delta} = \left(\frac{e^{3/4}}{2\sqrt{m}}\delta\right)^{\xi^{-1}m^2/2},
    \end{equation}
    where the use of $\xi^{-1}$ makes the further exposition more convenient. From the proof of \cref{lem:SmallNormProbability} it can be concluded that $\xi$ converges faster to one than $1 + O(m^{-1})$. A careful analysis of the proof would yield even more accurate estimates on $\xi$, but we do not pursue this direction. Furthermore, we would like to draw attention to Proposition 2.1 of \cite{Maillard24}, where the small norm probability of GOE matrices in the regime $\delta/\sqrt{m} = \kappa$ for some constant $\kappa > 0$ is investigated. Using the large deviations principle of Ben Arous and Guionnet \cite{BenArous97}, the author established the asymptotic
    \begin{equation*}
        \lim_{m \to \infty} \frac{1}{m^2} \P{\norm{X} \leq \delta} =
        \begin{cases}
            \frac{\kappa^4}{128} - \frac{\kappa^2}{8} + \frac{1}{2}\log{\frac{\kappa}{2}} + \frac{3}{8} & \text{if}~\kappa \leq 2, \\
            0 & \text{if}~\kappa > 2.
        \end{cases}
    \end{equation*}

    \section{Discrepancy of GOE matrices}
    \label{sec:GaussianDiscrepancy}

    In this section, we carry out the proof of \cref{thm:GaussianDiscrepancy} that provides exact bounds on the asymptotic discrepancy of GOE matrices. We first outline our proof strategy. We will study the random variable $S_n(\varepsilon)$ that counts the number of signings with discrepancy at most $\varepsilon > 0$,
    \begin{equation}
        \label{eq:CountingVariable}
        S_n(\varepsilon) \colonequals \sum_{x \in \{\pm1\}^n} \ind{\norm{\sum_{i=1}^n x_iA_i} \leq \varepsilon}.
    \end{equation}
    The events $\disc(A_1,\ldots,A_n) > \varepsilon$ and $\disc(A_1,\ldots,A_n) \leq \varepsilon$ correspond to the events $S_n(\varepsilon) = 0$ and $S_n(\varepsilon) > 0$, respectively. So, proving \cref{thm:GaussianDiscrepancy} boils down to showing that the probability of the event $S_n(\varepsilon) = 0$ has a sharp threshold at the critical value $\frac{2}{e^{3/4}}\sqrt{nm}4^{-\xi n/m^2}$. For this purpose, we use the first and second moment methods.

    \subsection{Lower bound via the first moment method}
    \label{sec:FirstMomentBound}

    The first moment method uses the first moment of a random variable and Markov's inequality to establish an upper bound on the probability of the variable exceeding a certain value. In the next lemma, we calculate the first moment of $S_n(\varepsilon)$.

    \begin{lemma}
        \label{lem:FirstMoment}
        The first moment of $S_n(\varepsilon)$ is given by
        \begin{equation*}
            \E S_n(\varepsilon) = 2^n \P{\norm{X} \leq \frac{\varepsilon}{\sqrt{n}}},
        \end{equation*}
        where $X \sim \GOE(m)$.
    \end{lemma}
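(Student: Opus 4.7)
The plan is to apply linearity of expectation and then use the rotational/sign invariance of the GOE distribution to reduce the sum over all signings to a single probability computation.

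First I would expand the expectation directly from the definition \eqref{eq:CountingVariable}. By linearity,
\begin{equation*}
    \E{S_n(\varepsilon)} = \sum_{x \in \{\pm1\}^n} \P{\norm{\sum_{i=1}^n x_iA_i} \leq \varepsilon}.
\end{equation*}
The core observation is that every term in this sum equals the same quantity, so the sum is just $2^n$ times the probability for any fixed signing.

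To justify this, fix an arbitrary $x \in \{\pm1\}^n$. Since each $A_i$ is a GOE matrix whose entries are centered Gaussians, and the normal distribution is symmetric about zero, the matrix $x_iA_i$ has the same distribution as $A_i$ for each $i$. By independence, the joint distribution of $(x_1A_1,\ldots,x_nA_n)$ coincides with that of $(A_1,\ldots,A_n)$, so $\sum_{i=1}^n x_iA_i$ is distributed as $\sum_{i=1}^n A_i$. Now $\sum_{i=1}^n A_i$ is again a symmetric matrix whose entries are sums of $n$ independent Gaussians; each off-diagonal entry has variance $n$ and each diagonal entry has variance $2n$. This is precisely the distribution of $\sqrt{n}\,X$ for a single $m\times m$ GOE matrix $X$. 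Consequently,
\begin{equation*}
    \P{\norm{\sum_{i=1}^n x_iA_i} \leq \varepsilon} = \P{\sqrt{n}\norm{X} \leq \varepsilon} = \P{\norm{X} \leq \frac{\varepsilon}{\sqrt{n}}}.
\end{equation*}

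Summing this identical value over the $2^n$ signings yields the claimed formula. There is no real obstacle here; the only subtlety is being explicit that signing preserves the GOE distribution (because of the Gaussian symmetry) and that a sum of $n$ i.i.d.\ GOE matrices is distributionally $\sqrt{n}$ times a single GOE matrix, which follows entry-wise from the stability of Gaussians under addition.
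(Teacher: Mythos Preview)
Your proof is correct and follows essentially the same approach as the paper: linearity of expectation together with the observation that $\frac{1}{\sqrt{n}}\sum_{i=1}^n x_iA_i$ is a GOE matrix for every signing $x$. You spell out the sign invariance and the Gaussian stability under sums a bit more explicitly than the paper does, but the argument is the same.
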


    \begin{proof}
        Noting that $X = \frac{1}{\sqrt{n}} \sum_{i=1}^n x_iA_i$ is a GOE matrix for any signing $x \in \{\pm 1\}^n$ and using linearity of expectation yields
        \begin{equation*}
            \E S_n(\varepsilon) = \sum_{x \in \{\pm 1\}^n} \P{\norm{\frac{1}{\sqrt{n}} \sum_{i=1}^n x_iA_i} \leq \frac{\varepsilon}{\sqrt{n}}} = 2^n \P{\norm{X} \leq \frac{\varepsilon}{\sqrt{n}}}.
            \qedhere
        \end{equation*}
    \end{proof}

    Since $S_n(\varepsilon)$ is non-negative and integer-valued, we obtain by Markov's inequality
    \begin{equation}
        \label{eq:FirstMomentMethod1}
        \P{S_n(\varepsilon) = 0} = 1 - \P{S_n(\varepsilon) \geq 1} \geq 1 - \E{S_n(\varepsilon)}.
    \end{equation}
    Set $\varepsilon = \beta2e^{-3/4}\sqrt{nm}4^{-\xi n/m^2}$. Then, from \cref{lem:SmallNormProbability} and \cref{lem:FirstMoment} follows that
    \begin{equation}
        \label{eq:FirstMomentMethod2}
        \E{S_n(\varepsilon)} = 2^n \P{\norm{X} \leq \frac{\varepsilon}{\sqrt{n}}} = \beta^{\xi^{-1}m^2/2}.
    \end{equation}
    Combining \eqref{eq:FirstMomentMethod1} and \eqref{eq:FirstMomentMethod2} yields the lower bound in Part \ref{thm:GaussianDiscrepancy1} of \cref{thm:GaussianDiscrepancy}. For the lower bound in Part \ref{thm:GaussianDiscrepancy2} note that $\beta^{\xi^{-1}m^2/2} = o(1)$ in the setting $m = \omega(1)$.

    \subsection{Upper bound via the second moment method}
    \label{sec:SecondMomentBound}

    The second moment method leverages the relationship between the first and second moments of a random variable to lower bound its probability of beeing positive. Despite its simple nature, it is a powerful tool in combinatorics; for example, many applications are presented in the book by Alon and Spencer \cite{Alon16}. It essentially consists in an application of the Paley-Zygmund inequality \cite{Paley32}. This yields
    \begin{equation}
        \label{eq:SecondMomentMethod}
        \P{S_n(\varepsilon) > 0} \geq \frac{\E{S_n(\varepsilon)}^2}{\E{S_n(\varepsilon)^2}}
    \end{equation}
    and shows that a uniform bound of the form $\P{S_n(\varepsilon) > 0} \geq 1/c$ holds when $\E{S_n(\varepsilon)^2}/\E{S_n(\varepsilon)}^2 \allowbreak \leq c$ for some $c > 0$. In particular, $\E{S_n(\varepsilon)^2}/\E{S_n(\varepsilon)}^2 \lesssim 1$ implies that $\P{S_n(\varepsilon) > 0} \approx 1$. The following lemma gives a useful representation for the second moment of $S_n(\varepsilon)$.

    \begin{lemma}
        \label{lem:SecondMoment}
        The second moment of $S_n(\varepsilon)$ is given by
        \begin{equation*}
            \E S_n(\varepsilon)^2 = 2^n \sum_{k=0}^n \binom{n}{k} \P{\norm{X_k} \leq \frac{\varepsilon}{\sqrt{n}},\norm{Y_k} \leq \frac{\varepsilon}{\sqrt{n}}},
        \end{equation*}
        where $X_k,Y_k$ are two $m \times m$ GOE matrices having correlation $\rho_k = 1 - 2k/n$.
    \end{lemma}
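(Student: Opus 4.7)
The plan is a direct computation based on expanding the square and exploiting the fact that signed sums of independent GOE matrices are themselves (correlated) GOE matrices. First I would square the definition in \eqref{eq:CountingVariable} to write
\begin{equation*}
    S_n(\varepsilon)^2 = \sum_{x,y \in \{\pm 1\}^n} \mathbbm{1}_{\set{\norm{\sum_{i=1}^n x_iA_i} \leq \varepsilon}} \mathbbm{1}_{\set{\norm{\sum_{i=1}^n y_iA_i} \leq \varepsilon}},
\end{equation*}
and then apply linearity of expectation to obtain
\begin{equation*}
    \E{S_n(\varepsilon)^2} = \sum_{x,y \in \{\pm 1\}^n} \P{\norm{\tfrac{1}{\sqrt{n}}\sum_{i=1}^n x_iA_i} \leq \tfrac{\varepsilon}{\sqrt{n}},\, \norm{\tfrac{1}{\sqrt{n}}\sum_{i=1}^n y_iA_i} \leq \tfrac{\varepsilon}{\sqrt{n}}}.
\end{equation*}

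Next, for a fixed pair $(x,y)$ I would identify the two matrices $X = \frac{1}{\sqrt{n}}\sum_i x_i A_i$ and $Y = \frac{1}{\sqrt{n}}\sum_i y_i A_i$. Since the $A_i$ are independent GOE matrices and both coefficient vectors $x/\sqrt{n}, y/\sqrt{n}$ have unit $\ell_2$-norm, each of $X$ and $Y$ is itself a GOE matrix (the entries remain jointly normal, and variances are preserved: $\var(X_{ij}) = \frac{1}{n}\sum_k x_k^2 \sigma_{ij}^2 = \sigma_{ij}^2$, and analogously for $Y$). For the correlation, the independence of the $A_i$ gives $\cov(X_{ij},Y_{ij}) = \frac{1}{n}\sum_{k=1}^n x_k y_k \sigma_{ij}^2$, so that
\begin{equation*}
    \corr(X_{ij},Y_{ij}) = \frac{1}{n}\sum_{k=1}^n x_k y_k = 1 - \frac{2k(x,y)}{n},
\end{equation*}
where $k(x,y) = \abs{\{i : x_i \neq y_i\}}$ is the Hamming distance. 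In particular, the correlation depends on $(x,y)$ only through $k(x,y)$.

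To finish, I would group the pairs $(x,y)$ according to $k(x,y)$: for each fixed $x$ there are exactly $\binom{n}{k}$ signings $y$ at Hamming distance $k$, and summing over $x$ contributes the overall factor $2^n$. Writing $X_k, Y_k$ for any two $m \times m$ GOE matrices of correlation $\rho_k = 1 - 2k/n$ (the joint distribution of $(X,Y)$ depends only on $\rho$, as is clear from \cref{lem:GaussianMatrix2}), we obtain
\begin{equation*}
    \E{S_n(\varepsilon)^2} = 2^n \sum_{k=0}^n \binom{n}{k} \P{\norm{X_k} \leq \tfrac{\varepsilon}{\sqrt{n}},\, \norm{Y_k} \leq \tfrac{\varepsilon}{\sqrt{n}}}.
\end{equation*}
The main thing to be careful about is the correlation computation and the justification that the joint law of $(X,Y)$ is determined by $\rho$ alone, but once \cref{lem:GaussianMatrix2} is in hand, this is immediate and the rest is bookkeeping.
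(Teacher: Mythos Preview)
Your proposal is correct and follows essentially the same argument as the paper: expand the square, apply linearity of expectation, compute the entrywise correlation of the two signed sums via $\frac{1}{n}\sum_k x_k y_k = 1 - 2k/n$, and group pairs by Hamming distance. Your additional remarks on variance preservation and on the joint law being determined by $\rho$ via \cref{lem:GaussianMatrix2} are a bit more explicit than the paper's version but do not change the approach.
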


    \begin{proof}
        By linearity of expectation, we have that
        \begin{equation*}
            \E S_n(\varepsilon)^2 = \sum_{x \in \{\pm1\}^n} \sum_{y \in \{\pm1\}^n} \P{\norm{\frac{1}{\sqrt{n}} \sum_{i=1}^n x_iA_i} \leq \frac{\varepsilon}{\sqrt{n}},\norm{\frac{1}{\sqrt{n}} \sum_{i=1}^n y_iA_i} \leq \frac{\varepsilon}{\sqrt{n}}}.
        \end{equation*}
        Consider two signings $x,y \in \{\pm1\}^n$ with Hamming distance $k$, that is, $x$ and $y$ differ in exactly $k$ entries. Then $X_k = \frac{1}{\sqrt{n}} \sum_{i=1}^n x_iA_i$ and $Y_k = \frac{1}{\sqrt{n}} \sum_{i=1}^n y_iA_i$ are two GOE matrices with correlation
        \begin{equation*}
            \corr(X_{ij},Y_{ij}) = \frac{1}{n} \sum_{s=1}^n \sum_{t=1}^n x_sy_t \E{(A_s)_{ij}(A_t)_{ij}} = \frac{1}{n} \sum_{s=1}^n x_sy_s = 1 - \frac{2k}{n} = \rho_k.
        \end{equation*}
        Fix a signing $x \in \{\pm1\}^n$. Then for $k = 0,\ldots,n$ there are exactly $\binom{n}{k}$ many signings $y \in \{\pm1\}^n$ having a Hamming distance of $k$ to $x$. It follows that
        \begin{equation*}
            \E S_n(\varepsilon)^2 = 2^n \sum_{k=0}^n \binom{n}{k} \P{\norm{X_k} \leq \frac{\varepsilon}{\sqrt{n}},\norm{Y_k} \leq \frac{\varepsilon}{\sqrt{n}}}.
            \qedhere
        \end{equation*}
    \end{proof}

    In order to describe the relationship between $\E{S_n(\varepsilon)}^2$ and $\E{S_n(\varepsilon)^2}$, we define the probability ratio
    \begin{equation}
        \label{eq:ProbabilityRatio}
        R_k(\delta) \colonequals \frac{\P{\norm{X_k} \leq \delta,\norm{Y_k} \leq \delta}}{\P{\norm{X} \leq \delta}^2}
    \end{equation}
    for $k = 0,\ldots,n$, where $X$ is an $m \times m$ GOE matrix and $X_k,Y_k$ are two $m \times m$ GOE matrices with correlation coefficent
    \begin{equation}
        \label{eq:CorrelationCoefficient}
        \rho_k \colonequals 1 - \frac{2k}{n}.
    \end{equation}
    In view of \cref{lem:FirstMoment} and \cref{lem:SecondMoment}, for $\varepsilon = \sqrt{n}\delta$, we have that
    \begin{equation}
        \label{eq:MomentRelationship}
        \E{S_n(\varepsilon)^2} = \E{S_n(\varepsilon)}^2 2^{-n} \sum_{k=0}^n \binom{n}{k} R_k(\delta),
    \end{equation}
    and bounding $\E{S_n(\varepsilon)}^2/\E{S_n(\varepsilon)^2}$ reduces to bounding the ratio
    \begin{equation}
        \label{eq:RemainderTerm}
        2^{-n} \sum_{k=0}^n \binom{n}{k} R_k(\delta).
    \end{equation}
    Note that if $R_k(\delta) = 1$ for all $k$, this would equal exactly one; our goal will be to show that the $R_k(\delta)$ with $k$ close to $n/2$, which are weighted most heavily by the binomial coefficient, are not too large and that this is actually approximately true.

    For the analysis, we follow the approach of Turner, Meka and Rigollet \cite{Turner20} and apply a truncation argument to split the sum into a leading-order term and a lower-order term. A careful analysis of the two terms, which is postponed to the next subsections, leads to the following results.
    The first lemma shows that the contribution from the lower-order term, which consists of the summands with $k \leq n/4$ or $k \geq 3n/4$, is negligible.

    \begin{lemma}
        \label{lem:LowerOrderTerm}
        Let $\delta = \gamma2e^{-3/4}\sqrt{m}4^{-\xi n/m^2}$ for some constant $\gamma > 1$.
        \begin{enumerate}[label=(\alph*)]
            \item In the setting $\omega(1) = m^2 = o(n)$, we have
            \begin{equation*}
                2^{-n} \sum_{k=0}^{n/4} \binom{n}{k} R_k(\delta) + 2^{-n} \sum_{k=3n/4}^n \binom{n}{k} R_k(\delta) = o(1).
            \end{equation*}
            \label{lem:LowerOrderTerm1}
            \item In the setting $m = O(1)$, we have
            \begin{equation*}
                2^{-n} \sum_{k=0}^{n/4} \binom{n}{k} R_k(\delta) + 2^{-n} \sum_{k=3n/4}^n \binom{n}{k} R_k(\delta) \leq  2\gamma^{-\xi^{-1} m^2/2} + o(1).
            \end{equation*}
            \label{lem:LowerOrderTerm2}
        \end{enumerate}
    \end{lemma}

    The second lemma shows that the main contribution comes from the leading-order term, consisting of the summands with $n/4 \leq k \leq 3n/4$.

    \begin{lemma}
        \label{lem:LeadingOrderTerm}
        Let $\delta = \gamma2e^{-3/4}\sqrt{m}4^{-\xi n/m^2}$ for some constant $\gamma > 1$. In the setting $m^2 \ll n/\log{n}$, we have
        \begin{equation*}
            2^{-n} \sum_{k=n/4}^{3n/4} \binom{n}{k} R_k(\delta) = 1 + o(1).
        \end{equation*}
    \end{lemma}

    To avoid confusion, in these results we interpret $\sum_{k=a}^b$ as the sum over all integers from $\lceil a \rceil$ to $\lfloor b \rfloor$.
    This may lead to an overlap between the sums, but that does not invalidate our bounds on the full sum since all summands are nonnegative.
    The proofs of \cref{lem:LowerOrderTerm} and \cref{lem:LeadingOrderTerm} are subject of the remainder of this section, but let us first show how they finish the proof of \cref{thm:GaussianDiscrepancy}.

    \begin{proof}[Proof of \cref{thm:GaussianDiscrepancy}]
        The lower bounds were already derived in \cref{sec:FirstMomentBound}, and it remains to show that the claimed upper bounds hold. For Part \ref{thm:GaussianDiscrepancy1} we assume that $\omega(1) = m^2 \ll n/\log{n}$. Set $\delta = \gamma2e^{-3/4}\sqrt{m}4^{-\xi n/m^2}$ and $\varepsilon = \sqrt{n}\delta$. Combining \cref{lem:LowerOrderTerm} and \cref{lem:LeadingOrderTerm} shows that the remainder term \eqref{eq:RemainderTerm} is bounded by $1 + o(1)$. From \eqref{eq:MomentRelationship} follows that $\E{S_n(\varepsilon)^2} \lesssim \E{S_n(\varepsilon)}^2$ and an application of the Paley-Zygmund inequality \eqref{eq:SecondMomentMethod} yields that $S_n(\varepsilon) > 0$ with high probability. Recalling that $S_n(\varepsilon) > 0$ corresponds to the event $\disc(A_1,\ldots,A_n) \leq \varepsilon$ completes the proof of Part \ref{thm:GaussianDiscrepancy1}. The proof of Part \ref{thm:GaussianDiscrepancy2} follows along the same lines.
    \end{proof}

    \subsection{Bounding the lower-order term}

    The goal of this subsection is to prove the statement of \cref{lem:LowerOrderTerm}, namely that, if we set $\delta \colonequals \gamma2e^{-3/4}\sqrt{m}4^{-\xi n/m^2}$ with $\gamma > 1$, then the lower-order term
    \begin{equation}
        \label{eq:LowerOrderTerm}
        2^{-n} \sum_{k=0}^{n/4} \binom{n}{k} R_k(\delta) + 2^{-n} \sum_{k=3n/4}^n \binom{n}{k} R_k(\delta)
    \end{equation}
    is asymptotically vanishing. This applies to the increasing dimension setting in Part \ref{lem:LowerOrderTerm1}. The asymptotic bound in Part \ref{lem:LowerOrderTerm2} for the constant dimension setting follows in a similar manner. The key idea for bounding \eqref{eq:LowerOrderTerm} is to give a uniform estimate on $R_k(\delta)$ to extract the probability ratios, and then use the following inequality to bound the remaining sum of binomial coefficients.

    \begin{lemma}
        \label{lem:BinomialSum}
        Let $t,n \in \N$ with $t \leq n$. Then
        \begin{equation*}
            \sum_{k=0}^t \binom{n}{k} = \sum_{k=n-t}^n \binom{n}{k} \leq \left(\frac{en}{t}\right)^t.
        \end{equation*}
    \end{lemma}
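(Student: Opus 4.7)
The plan is to handle the two assertions separately. The equality $\sum_{k=0}^t \binom{n}{k} = \sum_{k=n-t}^n \binom{n}{k}$ is immediate from the symmetry identity $\binom{n}{k} = \binom{n}{n-k}$: reindexing $k \mapsto n-k$ in one sum produces the other. This takes a single line and is not where any difficulty lies.

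For the upper bound, the idea is the standard exponential-weighting trick. Fix any $x \in (0,1]$. Since $k \leq t$ implies $x^k \geq x^t$, one has
\begin{equation*}
    x^t \sum_{k=0}^t \binom{n}{k} \leq \sum_{k=0}^t \binom{n}{k} x^k \leq \sum_{k=0}^n \binom{n}{k} x^k = (1+x)^n
\end{equation*}
by the binomial theorem. Rearranging yields $\sum_{k=0}^t \binom{n}{k} \leq (1+x)^n x^{-t}$ for every such $x$.

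The final step is to choose $x$ optimally, or at least sharply enough to match the claimed bound. Taking $x = t/n$, which lies in $(0,1]$ since $t \leq n$, gives
\begin{equation*}
    \sum_{k=0}^t \binom{n}{k} \leq \left(1 + \frac{t}{n}\right)^n \left(\frac{n}{t}\right)^t \leq e^t \left(\frac{n}{t}\right)^t = \left(\frac{en}{t}\right)^t,
\end{equation*}
using the elementary inequality $(1 + t/n)^n \leq e^t$. Combined with the symmetry observation, this establishes both claims.

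There is no real obstacle here: the lemma is a classical tail estimate for binomial coefficients, and the only choice to be made is the value of $x$, for which $x = t/n$ is the natural calculus-free pick that produces exactly the stated constant.
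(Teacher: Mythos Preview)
Your proof is correct and follows essentially the same approach as the paper: both multiply through by $(t/n)^t$, use $(t/n)^t \leq (t/n)^k$ for $k \leq t$, extend the sum and apply the binomial theorem, and finish with $(1+t/n)^n \leq e^t$. The only cosmetic difference is that you phrase it via a general weight $x \in (0,1]$ before specializing to $x = t/n$, and you spell out the symmetry identity for the equality, which the paper leaves implicit.
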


    \begin{proof}
        The first equation follows by symmetry. Applying the binomial theorem shows that
        \begin{equation*}
            \sum_{k=0}^t \binom{n}{k} \left(\frac{t}{n}\right)^t \leq \sum_{k=0}^n \binom{n}{k} \left(\frac{t}{n}\right)^k = \left(1 + \frac{t}{n}\right)^n \leq e^t.
        \end{equation*}
        Multiplying both sides by $(n/t)^t$ yields the claim.
    \end{proof}

    Bounding the probability ratios $R_k(\delta)$ is somewhat delicate, as different estimates are required depending on the position of $k$. If $k$ is close to $0$ or $n$ (and thus $\binom{n}{k}$ is relatively small), the crude bound
    \begin{equation*}
        \P{\norm{X_k} \leq \delta,\norm{Y_k} \leq \delta} \leq \P{\norm{X} \leq \delta}
    \end{equation*}
    in connection with \cref{lem:SmallNormProbability} yields a sufficiently accurate estimate
    \begin{equation}
        \label{eq:ProbabilityRatioBound}
        R_k(\delta) = \frac{\P{\norm{X_k} \leq \delta,\norm{Y_k} \leq \delta}}{\P{\norm{X} \leq \delta}^2} \leq \P{\norm{X} \leq \delta}^{-1} = 2^n \gamma^{-\xi^{-1} m^2/2}.
    \end{equation}
    As $k$ tends towards $n/2$, the contribution of the term $R_k(\delta)$ to \eqref{eq:LowerOrderTerm} becomes larger, due to the growth of the binomial coefficent, and a stronger estimate on $R_k(\delta)$ is necessary. Using the results on the probability density functions in \cref{lem:GaussianEigenvalues1} and \cref{lem:GaussianEigenvalues2}, we obtain such a bound in the next lemma.

    \begin{lemma}
        \label{lem:ProbabilityRatio}
        Let $R_k(\delta)$ be defined as in \eqref{eq:ProbabilityRatio}. For $0 < k < n$ holds
        \begin{equation*}
            R_k(\delta) \leq (1 - \rho_k^2)^{-m(m+1)/4} \exp{\abs{\rho_k}m\delta^2}.
        \end{equation*}
    \end{lemma}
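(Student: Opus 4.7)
The plan is to reduce the desired estimate to a pointwise inequality at the level of the joint and marginal eigenvalue densities. Writing $\rho = \rho_k$ and letting $D_\delta = \set{\lambda \in \R^m : -\delta \leq \lambda_1 \leq \ldots \leq \lambda_m \leq \delta}$, I would apply \cref{lem:GaussianEigenvalues1} and \cref{lem:GaussianEigenvalues2} to rewrite
\begin{equation*}
    R_k(\delta) = (1-\rho^2)^{-m(m+1)/4} \cdot \frac{\int_{D_\delta \times D_\delta} \exp{E(\lambda,\mu)} \Delta(\lambda)\Delta(\mu) d\lambda d\mu}{\left(\int_{D_\delta} \exp{-\norm{\lambda}_2^2/4} \Delta(\lambda) d\lambda\right)^2},
\end{equation*}
where $E(\lambda,\mu) = -(\norm{\lambda}_2^2 - 2\rho\inp{\lambda}{\mu} + \norm{\mu}_2^2)/(4(1-\rho^2))$; the normalizing constants $C_m^2$ cancel between numerator and denominator. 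The prefactor $(1-\rho^2)^{-m(m+1)/4}$ already matches the target bound, so the task reduces to showing that the remaining ratio of integrals is at most $\exp{\abs{\rho} m \delta^2}$.

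The heart of the argument is the pointwise inequality
\begin{equation*}
    E(\lambda,\mu) \leq -\frac{\norm{\lambda}_2^2 + \norm{\mu}_2^2}{4} + \abs{\rho} m \delta^2 \qquad \tforall \lambda,\mu \in D_\delta.
\end{equation*}
A short algebraic rearrangement gives the identity
\begin{equation*}
    E(\lambda,\mu) + \frac{\norm{\lambda}_2^2 + \norm{\mu}_2^2}{4} = \frac{2\rho\inp{\lambda}{\mu} - \rho^2(\norm{\lambda}_2^2 + \norm{\mu}_2^2)}{4(1-\rho^2)}.
\end{equation*}
The AM-GM-type bound $2\abs{\inp{\lambda}{\mu}} \leq \norm{\lambda}_2^2 + \norm{\mu}_2^2$ upper-bounds the numerator by $\abs{\rho}(1-\abs{\rho})(\norm{\lambda}_2^2 + \norm{\mu}_2^2)$. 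Cancelling $1-\abs{\rho}$ against the factorisation $1-\rho^2 = (1-\abs{\rho})(1+\abs{\rho})$ in the denominator and invoking the constraint $\norm{\lambda}_2^2, \norm{\mu}_2^2 \leq m\delta^2$ that holds on $D_\delta$ leaves an upper bound of $\abs{\rho}m\delta^2/(2(1+\abs{\rho}))$, which is comfortably below $\abs{\rho}m\delta^2$.

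With the pointwise estimate in hand, integrating against the nonnegative weight $\Delta(\lambda)\Delta(\mu)$ on the ordered domain $D_\delta \times D_\delta$ immediately yields the desired bound on the ratio of integrals, and dividing by $\P{\norm{X} \leq \delta}^2$ finishes the argument. I do not anticipate any serious obstacle here; the entire argument is a short direct computation, with the only mild subtlety being the uniform handling of the sign of $\rho$, which is cleanly absorbed into $\abs{\rho}$ via AM-GM. The slack between $\abs{\rho}m\delta^2/2$ and the stated $\abs{\rho}m\delta^2$ indicates a small amount of room, but the lemma as stated suffices for the subsequent analyses in \cref{lem:LowerOrderTerm1} and \cref{lem:LowerOrderTerm2}.
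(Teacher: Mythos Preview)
Your argument is correct and follows essentially the same route as the paper: both reduce $R_k(\delta)$ to the ratio of the joint eigenvalue density over the product of marginals and bound this ratio pointwise on $D_\delta \times D_\delta$, then integrate against the nonnegative weight $\Delta(\lambda)\Delta(\mu)$. The only minor difference is that the paper locates the exact maximiser of the remainder $r_k$ via a case split on the sign of $\rho_k$ and a KKT argument, whereas your AM--GM step $2\abs{\inp{\lambda}{\mu}} \leq \norm{\lambda}_2^2 + \norm{\mu}_2^2$ handles both signs uniformly and reaches the same intermediate bound $\abs{\rho_k}m\delta^2/(2(1+\abs{\rho_k}))$ more directly.
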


    \begin{proof}
        Recall the statements of \cref{lem:GaussianEigenvalues1} and \cref{lem:GaussianEigenvalues2}. By \cref{lem:GaussianEigenvalues1}, the ordered eigenvalues of an $m \times m$ GOE matrix $X$ have joint density
        \begin{equation*}
            p(\lambda) \colonequals C_m \exp{-\frac{\norm{\lambda}_2^2}{4}} \Delta(\lambda),
        \end{equation*}
        and by \cref{lem:GaussianEigenvalues2}, the ordered eigenvalues two $m \times m$ GOE matrices $X_k,Y_k$ with correlation $\rho_k$ have joint density bounded by
        \begin{equation*}
            q_k(\lambda,\mu) \colonequals C_m^2 (1 - \rho_k^2)^{-m(m + 1)/4} \exp{-\frac{\norm{\lambda}_2^2 - 2\rho_k\inp{\lambda}{\mu} + \norm{\mu}_2^2}{4(1 - \rho_k^2)}} \Delta(\lambda)\Delta(\mu).
        \end{equation*}
        To express the relationship between $p$ and $q_k$, we define the remainder function
        \begin{equation*}
            r_k(\lambda,\mu) \colonequals (1 - \rho_k^2)^{-m(m + 1)/4} \exp{-\frac{\rho_k^2\norm{\lambda}_2^2 - 2\rho_k\inp{\lambda}{\mu} + \rho_k^2\norm{\mu}_2^2}{4(1 - \rho_k^2)}}
        \end{equation*}
        and observe that $q_k(\lambda,\mu) = p(\lambda)p(\mu)r_k(\lambda,\mu)$. By the same argument as in the proof of \cref{lem:SmallNormProbability}, namely that the spectral norm of a symmetric matrix corresponds to the maximal absolute value of its eigenvalues, it follows that
        \begin{equation*}
            \P{\norm{X} \leq \delta} = \int_{D_\delta} p(\lambda) d\lambda, \quad \P{\norm{X_k} \leq \delta,\norm{Y_k} \leq \delta} \leq \int_{D_\delta \times D_\delta} q_k(\lambda,\mu) d(\lambda,\mu),
        \end{equation*}
        where $D_\delta \colonequals \{\lambda \in \R^m : -\delta \leq \lambda_1 \leq \ldots \leq \lambda_m \leq \delta\}$. This yields the simple estimate
        \begin{equation}
            \label{eq:ProbabilityRatioProof1}
            R_k(\delta) \leq \max_{\lambda,\mu \in D_\delta} r_k(\lambda,\mu).
        \end{equation}
        The remaining step is to determine the maximum on the right-hand side. Consider the case $k \leq n/2$. Then $\rho_k \geq 0$ and maximizing $r_k(\lambda,\mu)$ amounts to maximizing
        \begin{equation*}
            f(\lambda,\mu) \colonequals 2\inp{\lambda}{\mu} - \rho_k(\norm{\lambda}_2^2 + \norm{\mu}_2^2).
        \end{equation*}
        Using standard optimization techniques, one can argue\footnote{For example, one may argue as follows. It is not hard to see that $r_k$ is concave as $\rho_k < 1$ due to the assumption $k > 0$. Since $D_\delta$ is a polytope and in particular convex, it follows that the KKT conditions provide necessary and sufficient conditions for optimality \cite{Boyd04}. Verifying that $(\delta,\ldots,\delta,\delta,\ldots,\delta)^T$ satisfies the KKT conditions yields the claim.} that $f(\lambda,\mu)$ attains its maximum over $D_\delta \times D_\delta$ at $(\lambda^\star,\mu^\star) = (\delta,\ldots,\delta,\delta,\ldots,\delta)^T$. In particular, we have
        \begin{equation}
            \label{eq:ProbabilityRatioProof2}
            \max_{\lambda,\mu \in D_\delta} r_k(\lambda,\mu) = r_k(\lambda^\star,\mu^\star) = (1 - \rho_k^2)^{-m(m + 1)/4} \exp{\frac{\rho_k m\delta^2}{2(1 + \rho_k)}}.
        \end{equation}
        Similarly for the case $k \geq n/2$, one can show that $r_k(\lambda,\mu)$ attains its maximum over $D_\delta \times D_\delta$ at $(\lambda^\star,\mu^\star) = (\delta,\ldots,\delta,-\delta,\ldots,-\delta)^T$ and hence
        \begin{equation}
            \label{eq:ProbabilityRatioProof3}
            \max_{\lambda,\mu \in D_\delta} r_k(\lambda,\mu) = r_k(\lambda^\star,\mu^\star) = (1 - \rho_k^2)^{-m(m+1)/4} \exp{-\frac{\rho_k m\delta^2}{2(1 - \rho_k)}}.
        \end{equation}
        Combining our observations in \eqref{eq:ProbabilityRatioProof2} and \eqref{eq:ProbabilityRatioProof3} with \eqref{eq:ProbabilityRatioProof1} completes the proof.
    \end{proof}

    Now we are ready to prove \cref{lem:LowerOrderTerm}.

    \begin{proof}[Proof of \cref{lem:LowerOrderTerm}]
        For Part \ref{lem:LowerOrderTerm1} we assume that $\omega(1) = m^2 = o(n)$. Due to symmetry $R_k(\delta) = R_{n-k}(\delta)$, it suffices to show that the sum over $0 \leq k \leq n/4$ is of order $o(1)$. Denote $\alpha \colonequals n/m^2$ and notice that $\omega(1) = \alpha = o(n)$. For the further analysis, we split the sum into two parts
        \begin{equation*}
            S_1 \colonequals 2^{-n} \sum_{k=0}^{n/\alpha^2} \binom{n}{k} R_k(\delta), \quad S_2 \colonequals 2^{-n} \sum_{k=n/\alpha^2}^{n/4} \binom{n}{k} R_k(\delta).
        \end{equation*}
        First, consider the sum $S_1$. Applying \cref{lem:BinomialSum} with $t = n/\alpha^2$ yields the estimate
        \begin{equation}
            \label{eq:LowerTermProof2}
            \sum_{k=0}^{n/\alpha^2} \binom{n}{k} \leq (\alpha^2e)^{n/\alpha^2}.
        \end{equation}
        Combining \eqref{eq:LowerTermProof2} with the bound in \eqref{eq:ProbabilityRatioBound} shows that
        \begin{equation*}
            S_1 \leq \exp{\frac{(2\log{\alpha} + 1)n}{\alpha^2} - \frac{\log(\gamma)\xi^{-1}m^2}{2}} = o(1),
        \end{equation*}
        where we used $m^2 = n/\alpha$ and $\log{\gamma} > 0$ to conclude the bound. Next, we consider the sum $S_2$. For $n/\alpha^2 \leq k \leq n/4$, we have that $\abs{\rho_k} \leq 1$ and $1 - \rho_k^2 \geq \alpha^{-2}$, and thus by \cref{lem:ProbabilityRatio}
        \begin{equation}
            \label{eq:LowerTermProof3}
            R_k(\delta) \leq (1 - \rho_k^2)^{-m(m + 1)/4} \exp{\abs{\rho_k}m\delta^2} \leq \alpha^{m(m + 1)/2} \exp{m\delta^2}.
        \end{equation}
        Applying \cref{lem:BinomialSum} with $t = n/4$ and noting that $2^{-n}(4e)^{n/4} \leq 2^{-n/8}$ yields the estimate
        \begin{equation}
            \label{eq:LowerTermProof4}
            2^{-n}\sum_{k=n/\alpha^2}^{n/4} \binom{n}{k} \leq 2^{-n/8}.
        \end{equation}
        Combining the bounds in \eqref{eq:LowerTermProof3} and \eqref{eq:LowerTermProof4} shows that
        \begin{equation*}
            S_2 \leq \exp{\frac{\log(\alpha)m(m + 1)}{2} + m\delta^2 - \frac{\log(2)n}{8}} = o(1),
        \end{equation*}
        where we used $m\delta^2 = m^2o(1)$ and $m^2 = n/\alpha$ to conclude the bound. Altogether, we proved that $S_1 + S_2 = o(1)$ and the claim follows. The proof of Part \ref{lem:LowerOrderTerm2} follows along the same lines.
    \end{proof}

    \subsection{Bounding the leading-order term}

    The goal of this subsection is to prove the statement of \cref{lem:LeadingOrderTerm}, namely that, if we set $\delta \colonequals \gamma2e^{-3/4}\sqrt{m}4^{-\xi n/m^2}$ with $\gamma > 1$, then the leading-order term
    \begin{equation}
        \label{eq:LeadingOrderTerm}
        2^{-n} \sum_{k=n/4}^{3n/4} \binom{n}{k} R_k(\delta)
    \end{equation}
    is asymptotically $1 + o(1)$, provided that $m^2 \ll n/\log{n}$. Following the approach of Turner, Meka and Rigollet \cite{Turner20}, we approximate the above sum by an integral of the form $\int_a^b \exp{n\varphi(x)} dx$ and then use the Laplace method to obtain a sharp bound.
    We begin with a brief description of this asymptotic method.

    \textbf{Laplace method.} Let $\varphi : [a,b] \to \R$ be a twice continuously differentiable function with a unique maximum at $y \in (a,b)$. Furthermore, assume that the second derivative of $\varphi$ is negative at $y$. Consider the integral
    \begin{equation}
        \label{eq:LaplaceMethod1}
        \int_a^b \exp{n\varphi(x)} dx.
    \end{equation}
    To determine the asymptotic behaviour of \eqref{eq:LaplaceMethod1}, observe that the main contribution comes from a small neighborhood around $y$, that is,
    \begin{equation}
        \label{eq:LaplaceMethod2}
        \int_a^b \exp{n\varphi(x)} dx \approx \int_{y - \delta}^{y + \delta} \exp{n\varphi(x)} dx
    \end{equation}
    for some $\delta = o(1)$. A second order Taylor expansion around $y$ gives
    \begin{equation}
        \label{eq:LaplaceMethod3}
        \varphi(x) \approx \varphi(y) + \varphi'(y)(x - y) + \frac{1}{2} \varphi''(y)(x - y)^2
    \end{equation}
    for $x \in [y - \delta,y + \delta]$. Since $y$ is assumed to be an interior point of $[a,b]$ at which $\varphi$ admits a maximum, it follows that $y$ is a stationary point, that is, $\varphi'(y) = 0$. Furthermore, by assumption $\varphi''(y) < 0$. Therefore, we can rewrite \eqref{eq:LaplaceMethod3} as
    \begin{equation*}
        \varphi(x) \approx \varphi(y) - \frac{1}{2} \abs{\varphi''(y)}(x - y)^2.
    \end{equation*}
    Substituting this into \eqref{eq:LaplaceMethod2} yields the integral approximation
    \begin{equation*}
        \int_a^b \exp{n\varphi(x)} dx \approx \exp{n\varphi(y)} \int_{y - \delta}^{y + \delta} \exp{-\frac{1}{2}n\abs{\varphi''(y)}(x - y)^2} dx.
    \end{equation*}
    Due to the exponential decay, we can replace the integral boundaries by $-\infty$ and $\infty$ to obtain a Gaussian integral. This can be evaluated in closed form and yields
    \begin{equation}
        \label{eq:LaplaceMethod4}
        \int_a^b \exp{n\varphi(x)} dx \approx \sqrt{\frac{2\pi}{n\abs{\varphi''(y)}}} \exp{n\varphi(y)}.
    \end{equation}
    For further details on the Laplace method, we refer to the book by de Bruijn \cite{Bruijn81}. Under some additional assumptions, we can apply the method to sequences of functions $\varphi_n : [a,b] \to \R$.
    Apart from being twice continuously differentiable and having a unique maximum at $y \in (a,b)$ with $\varphi_n''(y) < 0$, we require that $\varphi_n''$ is equicontinuous at $y$, that is, that for each $\varepsilon > 0$ there exists $\delta > 0$ such that
    \begin{equation}
        \label{eq:Equicontinuity}
        \abs{\varphi_n''(x) - \varphi_n''(y)} \leq \varepsilon
    \end{equation}
    for all $x \in [y - \delta,y + \delta]$ and $n \in \N$. This allows us to perform a second order Taylor expansion around $y$ with an error independent of $n$. Furthermore, we require that for a sufficiently small $\delta > 0$ the difference between $\varphi_n(y)$ and the maximum of $\varphi_n(x)$ over $[a,y - \delta] \cup [y + \delta,b]$ is bounded by some constant independent of $n$. This ensures that the integral of $\exp{n\varphi_n(x)}$ is asymptotically negligible compared to $\exp{n\varphi_n(y)}/\sqrt{n}$. For concave functions the maximum will be attained at one of the boundary points $y - \delta$ and $y + \delta$. In connection with the equicontinuity at $y$, it suffices to require that $\varphi_n''(y)$ is bounded by some negative constant independent of $n$. We obtain the following result, whose proof is deferred to \cref{sec:Appendix}.

    \begin{lemma}
        \label{lem:LaplaceMethod}
        Let $\varphi_n : [a,b] \to \R$ be a sequence of twice continuously differentiable and concave functions with a unique maximum at $y \in (a,b)$. Assume that $\varphi_n''$ is equicontinuous at $y$ and $\varphi_n''(y) \leq c$ for some $c < 0$ independent of $n$. Then
        \begin{equation}
            \label{eq:LaplaceMethod}
            \int_a^b \exp{n\varphi_n(x)} dx \approx \sqrt{\frac{2\pi}{n\abs{\varphi_n''(y)}}} \exp{n\varphi_n(y)}.
        \end{equation}
    \end{lemma}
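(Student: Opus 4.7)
The plan is the classical Laplace strategy, with careful bookkeeping to make every error bound uniform in $n$. Fix an arbitrary $\varepsilon \in (0,\abs{c}/2)$. By equicontinuity of $\varphi_n''$ at $y$, there exists $\delta = \delta(\varepsilon) > 0$, independent of $n$, such that $\abs{\varphi_n''(\zeta) - \varphi_n''(y)} \leq \varepsilon$ for all $\zeta \in [y-\delta,y+\delta]$ and all $n$; after shrinking $\delta$ if needed, assume also $[y-\delta,y+\delta] \subseteq [a,b]$. I split $I_n = I_n^{\mathrm{mid}} + I_n^{\mathrm{tail}}$, where $I_n^{\mathrm{mid}} = \int_{y-\delta}^{y+\delta} \exp{n\varphi_n(x)} dx$ and $I_n^{\mathrm{tail}}$ is the integral over the complement.

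For the central piece, $\varphi_n'(y) = 0$ since $y$ is an interior maximum, so a second-order Taylor expansion with Lagrange remainder gives $\varphi_n(x) - \varphi_n(y) = \tfrac{1}{2}\varphi_n''(\zeta_x)(x-y)^2$ for some $\zeta_x$ between $x$ and $y$. Writing $\alpha_n = \abs{\varphi_n''(y)} \geq \abs{c}$, the equicontinuity bound sandwiches $\varphi_n(x) - \varphi_n(y)$ between $-\tfrac{1}{2}(\alpha_n + \varepsilon)(x-y)^2$ and $-\tfrac{1}{2}(\alpha_n - \varepsilon)(x-y)^2$ on $[y-\delta,y+\delta]$. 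Hence $\exp{-n\varphi_n(y)} I_n^{\mathrm{mid}}$ is sandwiched between the two Gaussian integrals $\int_{-\delta}^{\delta}\exp{-\tfrac{n}{2}(\alpha_n \pm \varepsilon)t^2} dt$. Extending each to the whole real line costs a tail whose exponent is at most $-\tfrac{n(\abs{c}-\varepsilon)\delta^2}{2}$, hence exponentially small compared to the target scale $1/\sqrt{n\alpha_n}$; evaluating the extended Gaussians yields $\sqrt{2\pi/(n(\alpha_n \pm \varepsilon))}$, and since $\alpha_n \geq \abs{c}$ the ratio $\sqrt{\alpha_n/(\alpha_n \pm \varepsilon)}$ lies within $1 \pm O(\varepsilon/\abs{c})$ uniformly in $n$.

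The tails are handled by concavity together with the same Taylor bound: concavity of $\varphi_n$ implies $\varphi_n(x) \leq \varphi_n(y-\delta)$ for $x \in [a,y-\delta]$ and symmetrically on the right, while Taylor gives $\varphi_n(y \pm \delta) \leq \varphi_n(y) - \tfrac{1}{2}(\abs{c} - \varepsilon)\delta^2$. Thus $I_n^{\mathrm{tail}} \leq (b-a)\exp{n\varphi_n(y) - \tfrac{n\delta^2}{2}(\abs{c}-\varepsilon)}$, which decays faster in $n$ than $\sqrt{2\pi/(n\alpha_n)}\exp{n\varphi_n(y)}$ by a factor that is exponentially small. Combining both pieces, for every fixed $\varepsilon > 0$ one obtains $\limsup_n \abs{I_n / (\sqrt{2\pi/(n\alpha_n)} \exp{n\varphi_n(y)}) - 1} \leq C\varepsilon/\abs{c}$ for an absolute constant $C$; letting $\varepsilon \to 0$ yields \eqref{eq:LaplaceMethod}.

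The main obstacle is keeping every approximation uniform in $n$, since $\alpha_n = \abs{\varphi_n''(y)}$ itself varies with $n$. The hypothesis $\alpha_n \geq \abs{c}$ is used both to guarantee $n\alpha_n\delta^2 \to \infty$, which makes the extension of the central Gaussian to the real line essentially free, and to convert an additive curvature perturbation of size $\varepsilon$ into a multiplicative ratio error of size $O(\varepsilon/\abs{c})$; equicontinuity is precisely what permits choosing $\delta$ once and for all independently of $n$, which is needed because it is $\delta$ that controls the Taylor remainder producing this $\varepsilon$.
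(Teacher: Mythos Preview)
Your proof is correct and follows essentially the same route as the paper: split into a central window and tails, use Taylor with Lagrange remainder together with equicontinuity to sandwich the central piece between two Gaussians, and use concavity plus the uniform curvature bound $\varphi_n''(y)\leq c<0$ to show the tails are exponentially negligible. The only cosmetic difference is that the paper parametrizes the curvature perturbation multiplicatively as $(1\pm\varepsilon)^2\abs{\varphi_n''(y)}$ whereas you use the additive form $\alpha_n\pm\varepsilon$, which is equivalent for the purpose of the argument.
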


    To apply Laplace's method in our situation, we need to bound the leading-order term \eqref{eq:LeadingOrderTerm} by an integral of the form \eqref{eq:LaplaceMethod1}. By \cref{lem:ProbabilityRatio}, we have
    \begin{equation*}
        R_k(\delta) \leq (1 - \rho_k^2)^{-m(m+1)/4} \exp{\abs{\rho_k}m\delta^2}.
    \end{equation*}
    We replace $\abs{\rho_k}$ by $\sqrt{\rho_k^2 + \varepsilon}$ with $\varepsilon \colonequals m^2\delta^4 > 0$ to obtain a twice continuously differentiable approximation (for the approximation argument $\varepsilon > 0$ suffices, but as it turns out $\sqrt{\varepsilon} = \omega(m\delta^2/n)$ is necessary for \cref{lem:StrictConcavity} to hold)
    \begin{equation}
        \label{eq:LaplaceApproximation1}
        R_k(\delta) \leq (1 - \rho_k^2)^{-m(m+1)/4} \exp{(\rho_k^2 + \varepsilon)^{1/2}m\delta^2}.
    \end{equation}
    Recall from the definition of $\rho_k$ in \eqref{eq:CorrelationCoefficient} that
    \begin{equation*}
        1 - \rho_k^2 = \frac{4k}{n}\left(1 - \frac{k}{n}\right).
    \end{equation*}
    Introducing the auxiliary functions $f,g : [1/4,3/4] \to \R$, defined by
    \begin{equation*}
        f(x) \colonequals ((1 - 2x)^2 + \varepsilon)^{1/2}, \quad g(x) \colonequals -\log(x(1 - x)),
    \end{equation*}
    and denoting $x_k \colonequals k/n$ for $k = 0,\ldots,n$, we can rewrite \eqref{eq:LaplaceApproximation1} as
    \begin{equation}
        \label{eq:LaplaceApproximation2}
        R_k(\delta) \leq \exp{\frac{m(m + 1)}{4} (g(x_k) - \log{4}) + m\delta^2 f(x_k)}.
    \end{equation}
    To approximate the binomial coefficients, one can use the standard bound
    \begin{equation*}
        \binom{n}{k} \leq \exp{nh(k/n)},
    \end{equation*}
    where $h : [0,1] \to \R$ is the binary entropy function defined by $h(0) = h(1) = 0$ and
    \begin{equation*}
        h(x) = -x\log{x} - (1 - x)\log(1 - x), \quad x \in (0,1).
    \end{equation*}
    However, proceeding with this approximation leads to an insufficient bound of $(1 + o(1))\sqrt{2\pi n}$. To get rid of the factor $\sqrt{2\pi n}$, the following sharper approximation is required, which was also used by Turner, Meka and Rigollet \cite{Turner20}.

    \begin{lemma}
        Let $n,k \in \N$ with $k \leq n$. For $x = k/n$ holds
        \begin{equation}
            \label{eq:BinomialApproximation1}
            \binom{n}{k} \approx \frac{\exp{nh(x)}}{\sqrt{2\pi nx(1-x)}}.
        \end{equation}
    \end{lemma}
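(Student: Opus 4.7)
The plan is to derive this approximation from Stirling's formula in the precise form $n! = \sqrt{2\pi n}(n/e)^n(1 + O(1/n))$, applied to each of the three factorials in $\binom{n}{k} = n!/(k!(n-k)!)$. First I would substitute these expansions and split the resulting expression into a purely exponential part and a polynomial prefactor, as the two combine cleanly.

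For the exponential factor, the $e$-contributions satisfy $e^{-n}/(e^{-k}e^{-(n-k)}) = 1$, while the power-law contributions produce $n^n/(k^k(n-k)^{n-k})$. Taking the logarithm and substituting $k = nx$ and $n - k = n(1-x)$, the $\log{n}$ terms cancel by the identity $1 - x - (1-x) = 0$, leaving exactly $-nx\log{x} - n(1-x)\log{1-x} = nh(x)$. So the exponential part contributes precisely $\exp{nh(x)}$. For the prefactor, the square roots combine as $\sqrt{2\pi n}/(\sqrt{2\pi k}\sqrt{2\pi(n-k)}) = \sqrt{n/(2\pi k(n-k))}$, which upon substituting $k = nx$, $n - k = n(1-x)$ simplifies to $1/\sqrt{2\pi n x(1-x)}$, matching the denominator in the claim.

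The main obstacle is not conceptual but lies in the regime of $k$. Since the $\approx$ notation requires a genuine $1 + o(1)$ factor and the Stirling error $(1 + O(1/k))$ in $k!$ only becomes negligible as $k \to \infty$ (and similarly for $(n-k)!$), the lemma implicitly assumes that both $k$ and $n - k$ grow to infinity. This assumption is automatically satisfied in the intended application in \cref{sec:LaplaceMethod} and in the analysis of the leading-order term, where one only needs the approximation for $n/4 \leq k \leq 3n/4$; in this regime $k, n - k = \Theta(n)$ and the combined multiplicative error $(1 + O(1/n))(1 + O(1/k))(1 + O(1/(n-k)))$ is even of order $1 + O(1/n)$. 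Once the Stirling expansion is plugged in and simplified as above, the conclusion is immediate.
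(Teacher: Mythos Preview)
Your proposal is correct and follows essentially the same approach as the paper: apply Stirling's approximation to each of the three factorials in $\binom{n}{k}=n!/(k!(n-k)!)$, separate the exponential and square-root contributions, and simplify using $k=nx$, $n-k=n(1-x)$. Your added remark about the implicit requirement that $k$ and $n-k$ both tend to infinity (satisfied in the relevant range $n/4\le k\le 3n/4$) is a welcome clarification that the paper's proof leaves unstated.
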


    \begin{proof}
        Applying Stirling's approximation $n! \approx \sqrt{2\pi n} (n/e)^n$ gives
        \begin{equation*}
            \binom{n}{k} = \frac{n!}{k!(n-k)!} \approx \sqrt{\frac{n}{2\pi k(n-k)}} \left(\frac{n}{k}\right)^k \left(\frac{n}{n-k}\right)^{n-k}.
        \end{equation*}
        This expression can be rewritten as
        \begin{equation*}
            \frac{1}{\sqrt{2\pi k(1-k/n)}} \exp{-\log(k/n)k-\log(1-k/n)(n-k)}
        \end{equation*}
        and substituting $x$ for $n/k$ yields the desired bound.
    \end{proof}

    Using the previously introduced functions, we can rewrite \eqref{eq:BinomialApproximation1} as
    \begin{equation}
        \label{eq:BinomialApproximation2}
        \binom{n}{k} \approx \frac{1}{\sqrt{2\pi n}} \exp{nh(x_k) + \frac{1}{2} g(x_k)}.
    \end{equation}
    Now define the twice continuously differentiable function $\varphi_n : [1/4,3/4] \to \R$ by
    \begin{equation}
        \label{eq:LaplaceFunction}
        \varphi_n(x) \colonequals h(x) + \frac{m(m+1)}{4n} (g(x) - \log{4}) + \frac{m\delta^2}{n} f(x) + \frac{1}{2n} g(x).
    \end{equation}
    In view of \eqref{eq:LaplaceApproximation2} and \eqref{eq:BinomialApproximation2}, we conclude $\binom{n}{k} R_k(\delta) \lesssim \frac{1}{\sqrt{2\pi n}} \exp{n\varphi_n(x_k)}$. Summing up this inequality over $k = n/4,\ldots,3n/4$, we obtain
    \begin{equation}
        \label{eq:LaplaceApproximation3}
        \sum_{k=n/4}^{3n/4} \binom{n}{k} R_k(\delta) \lesssim \frac{1}{\sqrt{2\pi n}} \sum_{k=n/4}^{3n/4} \exp{n\varphi_n(x_k)}.
    \end{equation}
    In accordance with the previous convention, we replace $n/4,3n/4$ by $\lceil n/4 \rceil,\lfloor 3n/4 \rfloor$ if the values are not integer. The next step is to show that $\varphi_n$ is strictly concave for sufficiently large $n$. This yields the desired integral approximation and allows us to apply Laplace's method.

    \begin{lemma}
        \label{lem:StrictConcavity}
        In the setting of \cref{lem:LeadingOrderTerm}, the function $\varphi_n$ defined in \eqref{eq:LaplaceFunction} is strictly concave and has a unique maximum at $1/2$ for $n$ large enough. Furthermore, its second derivative $\varphi_n''$ converges uniformly to $h''$ over $[1/4,3/4]$.
    \end{lemma}

    \begin{proof}
        Throughout the proof, assume that $h$ is restricted to the interval $[1/4,3/4]$. The second derivatives of $f,g,h$ are given by
        \begin{equation*}
            f''(x) = \frac{4\varepsilon}{((1 - 2x)^2 + \varepsilon)^{3/2}}, \quad g''(x) = \frac{2x^2 - 2x + 1}{(x - 1)^2x^2}, \quad h''(x) = -\frac{1}{x(1 - x)}
        \end{equation*}
        for $x \in [1/4,3/4]$. By linearity,
        \begin{equation*}
            \varphi_n''(x) = h''(x) + \frac{m(m+1)}{4n} g''(x)+ \frac{m\delta^2}{n} f''(x) + \frac{1}{2n} g''(x)
        \end{equation*}
        for $x \in [1/4,3/4]$. Note that $g''$ is continuous and therefore bounded on $[1/4,3/4]$, and $f''$ is non-negative and bounded by $4/\sqrt{\varepsilon} = 4/(m\delta^2)$.
        Using the assumption $m^2 = o(n)$, we conclude that $\varphi_n''$ converges pointwise to $h''$. Since $f'',g'',h''$ are continuously differentiable, it follows that $\varphi_n''$ is $L$-Lipschitz for some $L$ not depending on $n$. So the above convergence is uniform, which proves the second part of the theorem.

        For the first part, observe that $h''(x) \leq -4$ for all $x \in [1/4,3/4]$ and hence $\varphi_n''$ is negative for $n$ large enough. This implies that $\varphi_n$ is strictly concave and therefore admits a unique maximum. Verifying that $\varphi_n'(1/2) = 0$, we find that this maximum occurs at $1/2$.
    \end{proof}

    \begin{proof}[Proof of \cref{lem:LeadingOrderTerm}]
        According to the discussion above, resulting in \eqref{eq:LaplaceApproximation3}, we have
        \begin{equation}
            \label{eq:LeadingTermProof1}
            2^{-n} \sum_{k=n/4}^{3n/4} \binom{n}{k} R_k(\delta) \lesssim \frac{2^{-n}}{\sqrt{2\pi n}} \sum_{k=n/4}^{3n/4} \exp{n\varphi_n(x_k)},
        \end{equation}
        where $\varphi_n$ is defined as in \eqref{eq:LaplaceFunction}. By \cref{lem:StrictConcavity}, for $n$ sufficiently large the function $\varphi_n$ is strictly concave with a unique maximum at $1/2$. Further, $\varphi_n$ is symmetric around $1/2$.
        Therefore, a Riemann sum of $\exp{\varphi_n(x/n)}$ over $[n/4,3n/4]$ with partition $x_{n/4},\ldots,x_{3n/4}$ underestimates the corresponding integral
        \begin{equation}
            \label{eq:LeadingTermProof2}
            \sum_{k=n/4}^{3n/4} \exp{n\varphi_n(x_k)} \leq n \int_{1/4}^{3/4} \exp{n\varphi_n(x)} dx.
        \end{equation}
        By Lemma \ref{lem:StrictConcavity}, we also know that $\varphi_n''$ converges uniformly to $h''$ over $[1/4,3/4]$. This implies that $\varphi_n''$ is equicontinuous and bounded by some negative constant if $n$ is large enough. Thus, we can apply \cref{lem:LaplaceMethod} to obtain
        \begin{equation}
            \label{eq:LeadingTermProof3}
            \int_{1/4}^{3/4} \exp{n\varphi_n(x)} dx \approx \sqrt{\frac{2\pi}{n\abs{\varphi_n''(1/2)}}} \exp{n\varphi_n(1/2)}.
        \end{equation}
        The convergence of $\varphi_n''(1/2)$ to $h''(1/2)$ yields $\varphi_n''(1/2) \approx -4$ and a straightforward calculation gives $\varphi_n(1/2) = \log{2} + \sqrt{\varepsilon}m\delta^2/n + \log{2}/n$. Combining \eqref{eq:LeadingTermProof1}, \eqref{eq:LeadingTermProof2}, \eqref{eq:LeadingTermProof3} and substituting $\varphi_n(1/2)$, $\varphi_n''(1/2)$ shows that
        \begin{equation*}
            2^{-n} \sum_{k=n/4}^{3n/4} \binom{n}{k} R_k(\delta) \lesssim \frac{2^{-n}}{\sqrt{\abs{\varphi_n''(1/2)}}} \exp{n\varphi_n(1/2)} \approx \exp{\sqrt{\varepsilon}m\delta^2}.
        \end{equation*}
        Since $\delta^4 = O(m^2/n^2)$ if $m^2 \ll n/\log{n}$ (in fact $m^2 \leq 2\log(4)\xi n/\log{n}$ suffices), we have that $\sqrt{\varepsilon}m\delta^2 = O(m^4/n^2) = o(1)$ and the desired result follows.
    \end{proof}

    \section{Discrepancy of general random matrices}
    \label{sec:GeneralDiscrepancy}

    In this section, we prove \cref{thm:GeneralDiscrepancy} and \cref{thm:WishartDiscrepancy}. We first introduce our proof techniques for lower and upper bounds on matrix discrepancy.

    \subsection{Lower bound via the Gramian spectral method}

    Our approach to a lower bound is based on a connection between the discrepancy of $A_1,\ldots,A_n$ and the spectrum of its Gram matrix with respect to the Frobenius inner product. Given vectors $v_1,\ldots,v_n$ in an inner product space, we refer to the $n \times n$ matrix with entries $\inp{v_i}{v_j}$ for $1 \leq i,j \leq n$ as the \emph{Gram matrix} of $v_1,\ldots,v_n$.

    \begin{lemma}
        \label{lem:GramBound}
        Let $A_1,\ldots,A_n \in \R^{m \times m}$ be symmetric matrices, and let $M$ denote the Gram matrix of $A_1,\ldots,A_n$ with respect to the Frobenius inner product. Then
        \begin{equation*}
            \sqrt{\frac{n}{m}\lambda_{\min}(M)} \leq \disc(A_1,\ldots,A_n) \leq \sqrt{n\lambda_{\max}(M)}.
        \end{equation*}
    \end{lemma}

    \begin{proof}
        Recall that $\norm{A} \leq \norm{A}_F \leq \sqrt{m}\norm{A}$. For $x \in \R^n$ follows that
        \begin{equation*}
            \norm{\sum_{i=1}^n x_iA_i}^2 \geq \frac{1}{m} \norm{\sum_{i=1}^n x_iA_i}_F^2 = \frac{1}{m} \sum_{i,j=1}^n x_ix_j\inp{A_i}{A_j} = \frac{1}{m} \inp{x}{Mx}
        \end{equation*}
        and similarly we derive $\norm{\sum_{i=1}^n x_iA_i}^2 \leq \inp{x}{Mx}$. When $x \in \{\pm1\}^n$ we have that $\norm{x}_2^2 = n$ and the claim follows from the Courant-Fischer theorem.
    \end{proof}

    To bound the spectrum of a random Gram matrix, we use a powerful result of Adamczak, Litvak, Pajor, and Tomczak-Jaegermann \cite{Adamczak11} on the isometry constant. Given a matrix $A \in \R^{m \times n}$, its isometry constant (of order $k$) is defined as the smallest number $\delta_k = \delta_k(A)$ such that
    \begin{equation}
        \label{eq:IsometryConstant}
        (1 - \delta_k)\norm{x}_2^2 \leq \norm{Ax}_2^2 \leq (1 + \delta_k)\norm{x}_2^2
    \end{equation}
    holds for all vectors $x \in \R^n$ with at most $k$ nonzero entries.

    \begin{lemma}
        \label{lem:GramSpectrum}
        Suppose that $n \leq d$. Let $X_1,\ldots,X_n$ be independent centered random vectors in $\R^d$ with $\psi_1$-norm bounded by $\psi$, and let $M$ denote the Gram matrix of $X_1,\ldots,X_n$. There exist constants $C_1,C_2 > 0$ such that, for any $\theta \in (0,1)$, holds
        \begin{equation*}
            d(1 - \delta) \leq \lambda_{\min}(G) \leq \lambda_{\max}(G) \leq d(1 + \delta)
        \end{equation*}
        with probability at least
        \begin{equation}
            \label{eq:GramSpectrumProbability}
            1 - \exp{-C_2\sqrt{n}\log(e\sqrt{d/n})} - 2\P{\max_{i=1,\ldots,n} \abs{\frac{1}{d}\norm{X_i}_2^2 - 1} \geq \theta},
        \end{equation}
        where $\delta$ is given by
        \begin{equation}
            \label{eq:GramSpectrumConstant}
            \delta = C_1(\psi + \sqrt{1 + \theta})^2\sqrt{\frac{n}{d}}\log(e\sqrt{d/n}) + \theta.
        \end{equation}
    \end{lemma}

    \begin{proof}
        Let $A$ denote the $d \times n$ random matrix with columns $X_1,\ldots,X_n$ so that $M = A^TA$. Assume that $\delta_n(\frac{1}{\sqrt{d}}A) \leq \delta$ holds with probability $p > 0$, where $\delta$ is defined by \eqref{eq:GramSpectrumConstant}. The latter event implies that $(1 - \delta)d\norm{x}_2^2 \leq \norm{Ax}_2^2 \leq d(1 + \delta)\norm{x}_2^2$ for all $x \in \R^n$. Consequently, by the Courant-Fischer theorem, we have that
        \begin{equation*}
            \lambda_{\min}(M) = \min_{\norm{x}_2 = 1} \inp{x}{Mx} = \min_{\norm{x}_2 = 1} \norm{Ax}_2^2 \geq d(1 - \delta)
        \end{equation*}
        and similarly $\lambda_{\max}(M) \leq d(1 + \delta)$, with probability at least $p$. From the estimate on the isometry constant of $\frac{1}{\sqrt{d}}A$ in Theorem 3.2 of \cite{Adamczak11}, we obtain
        \begin{align*}
            p \geq 1 &- \exp{-C_2\sqrt{n}\log(e\sqrt{d/n})} - \P{\max_{i=1,\ldots,n} \norm{X_i}_2 \geq \sqrt{(1 + \theta)d}} \\ &- \P{\max_{i=1,\ldots,n} \abs{\frac{1}{d}\norm{X_i}_2^2 - 1} \geq \theta}
        \end{align*}
        and noting that $\norm{X_i}_2 \geq \sqrt{(1 + \theta)d}$ implies $\abs{\norm{X_i}_2^2/d - 1} \geq \theta$ yields $p \geq \eqref{eq:GramSpectrumProbability}$.
    \end{proof}

    \subsection{Upper bound via vectorization}

    Our approach to an upper bound will ultimately take a similar form to the above spectral lower bound, but will be achieved by a quite different analysis.
    A random vector $X$ in $\R^m$ is called \emph{$\sigma$-subgaussian} if for all $y \in \R^m$ holds
    \begin{equation*}
        \E\exp{\inp{X}{y}} \leq \exp{\frac{\sigma^2}{2}\norm{y}_2^2}.
    \end{equation*}
    When $X$ is centered this is equivalent to an $O(\sigma)$ bound on the $\psi_2$-norm. A line of work by Bansal, Dadush, Garg, Lovett and Nikolov \cite{Bansal19a,Dadush16}, whose goal was an algorithmic version of Banaszczyk's method \cite{Banaszczyk98}, led to the following result.

    \begin{theorem}[Theorem 1.4 of \cite{Bansal19a}]
        \label{thm:SubgaussianSigning}
        Let $v_1,\ldots,v_n \in \R^m$ be vectors of Euclidean norm at most one. Then there exists a distribution of random signs $x_1,\ldots,x_n \in \{\pm1\}$ such that $\sum_{i=1}^n x_iv_i$ is $\sigma$-subgaussian, for some absolute constant $\sigma > 0$.
    \end{theorem}

    Perhaps surprisingly, this vector-valued result is already enough to derive powerful bounds on matrix discrepancy, because a standard technique allows us to control the spectral norm of a random matrix whose vectorization is subgaussian.

    \begin{lemma}
        \label{lem:SubgaussianVectorization}
        Suppose that $M$ is an $m \times m$ random symmetric matrix whose symmetric vectorization $\symvec(M)$ is $\sigma$-subgaussian. Then
        \begin{equation*}
            \P{\norm{M} \geq 4\sigma\sqrt{m}} \leq \exp{-m}.
        \end{equation*}
    \end{lemma}

    \begin{proof}
        By Chernoff's inequality, for $\lambda > 0$ we have
        \begin{align*}
            \P{y^TMy \geq t} &\leq \E\exp{\lambda y^TMy}\exp{-\lambda t},
            \intertext{and since $y^TMy = \inp{yy^T}{M}$ the subgaussian assumption gives}
            &\leq \exp{\frac{\sigma^2}{2}\lambda^2 - \lambda t}.
        \end{align*}
        Taking $\lambda = \frac{t}{\sigma^2}$ yields
        \begin{equation*}
            \P{y^TMy \geq t} \leq \exp{-\frac{t^2}{2\sigma^2}}.
        \end{equation*}
        Let $\Sigma$ be a $\frac{1}{2}$-net of the Euclidean sphere. Then Lemma 2.3.2 of \cite{Tao12} shows that
        \begin{align*}
            \P{\norm{M} \geq C\sqrt{m}} &\leq \P{\max_{y \in \Sigma} y^TMy \geq \frac{C}{2}\sqrt{m}} \leq \exp{-\frac{C^2}{8\sigma^2}}\abs{\Sigma}
            \intertext{and by Lemma 2.3.4 of \cite{Tao12} there is a $1/2$-net of size $\abs{\Sigma} \leq 3^m$ so that}
            &\leq \exp{\left(\log{3} - \frac{C^2}{8\sigma^2}\right)m}.
        \end{align*}
        Choosing $C = 4\sigma$ gives the desired result.
    \end{proof}

    \begin{lemma}
        \label{lem:UpperBound}
        Let $A_1,\ldots,A_n$ be $m \times m$ symmetric matrices. Then
        \begin{equation*}
            \disc(A_1,\ldots,A_n) = O(\sqrt{m}\max_{i=1,\ldots,n}\norm{A_i}_F).
        \end{equation*}
    \end{lemma}

    \begin{proof}
        The symmetric vectorizations $\symvec(A_1),\ldots,\symvec(A_n)$ have Euclidean norm at most $\max_{i=1,\ldots,n}\norm{A_i}_F$. Via a scaling argument \cref{thm:SubgaussianSigning} gives random signs $x_1,\ldots,x_n \in \{\pm1\}$ such that the symmetric vectorization of $M = \sum_{i=1}^n x_iA_i$ is $O(\max_{i=1,\ldots,n}\norm{A_i}_F)$-subgaussian. Then \cref{lem:SubgaussianVectorization} implies the claim.
    \end{proof}

    \subsection{Discrepancy of general random matrices}

    Now, we combine the lower and upper bounds to deduce \cref{thm:GeneralDiscrepancy}.

    \begin{proof}[Proof of \cref{thm:GeneralDiscrepancy}]
        Denote $d \colonequals m(m + 1)/2$ and consider the independent centered random vectors $X_i \colonequals \symvec(A_i)$ in $\R^d$ for $i = 1,\ldots,n$. Note that
        \begin{equation*}
            \max_{i=1,\ldots,n} \norm{X_i}_{\psi_1} = \max_{i=1,\ldots,n} \norm{A_i}_{\psi_1} \leq \psi.
        \end{equation*}
        Applying \cref{lem:GramSpectrum} shows that with probability at least \eqref{eq:GramSpectrumProbability} holds
        \begin{equation*}
            \lambda_{\min}(M) \geq d(1 - \delta),
        \end{equation*}
        where $M$ is the Gram matrix of $X_1,\ldots,X_n$ and $\delta$ is defined as in \eqref{eq:GramSpectrumConstant}. For $n \ll m^2$ holds $\delta \ll 1$ and \cref{lem:GramBound} gives the lower bound
        \begin{equation*}
            \disc(A_1,\ldots,A_n) \geq \sqrt{\frac{n}{m}\lambda_{\min}(M)} \geq \Omega(\sqrt{nm})
        \end{equation*}
        with probability at least \eqref{eq:GramSpectrumProbability}. Since $\norm{X_i}_2 = \norm{A_i}_F$ for $i = 1,\ldots,n$, the assumed norm concentration \eqref{eq:NormConcentration} and a union bound yield
        \begin{equation*}
            \eqref{eq:GramSpectrumProbability} \geq 1 - \exp{-\Omega(\sqrt{n})} - 2\P{\max_{i=1\ldots,n}\abs{\frac{1}{m^2}\norm{A_i}_F^2 - 1} \geq 1} \geq 1 - o(1) - o\of{\frac{1}{m^2}}n,
        \end{equation*}
        which tends to one due to the assumption $n \ll m^2$. The upper bound follows immediately from \cref{lem:UpperBound} and the norm concentration.
    \end{proof}

    \subsection{Discrepancy of Wishart matrices}

    The goal of this subsection is to prove \cref{thm:WishartDiscrepancy}. As mentioned earlier, we cannot apply \cref{thm:GeneralDiscrepancy} directly because Wishart matrices are not centered. Note that $\E{W} = rI_m$ for an $m \times m$ Wishart matrix $W$ of rank $r \leq m$. Instead, we work with the centered version $\overline{W} \colonequals W - rI_m$ and recover the results for the original version $W$ afterwards. We begin by stating two preliminary results. Firstly, a concentration inequality for the squared Frobenius norm of Wishart matrices and, secondly, a bound on the $\psi_1$-norm of centered Wishart matrices. The proofs of these results can be found in \cref{sec:Appendix}.

    \begin{lemma}
        \label{lem:WishartConcentration}
        Let $W$ be an $m \times m$ Wishart matrix of rank $r \leq m$. For $r \ll m$ holds
        \begin{equation*}
            \P{\abs{\frac{1}{rm^2}\norm{W}_F^2 - 1} \leq \frac{1}{2}} \geq 1 - \exp{-\Omega(m)}.
        \end{equation*}
        The same statement also applies if $W$ is replaced by its centered version $\overline{W}$.
    \end{lemma}

    \begin{lemma}
        \label{lem:WishartNorm}
        Let $W$ be an $m \times m$ Wishart matrix of rank $r \leq m$. Then
        \begin{equation*}
            \norm{\overline{W}}_{\psi_1} \leq O(\sqrt{r}).
        \end{equation*}
    \end{lemma}

    \begin{proof}[Proof of \cref{thm:WishartDiscrepancy}]
        By \cref{lem:WishartConcentration} and a union bound argument, we conclude that the maximum of $\norm{A_i}_F$ over $i = 1,\ldots,n$ is less than $O(\sqrt{r}m)$ with probability at least $1 - n\exp{-\Omega(m)}$, which tends to one due to our assumption $n \ll m^2$. Then, the desired upper bound follows immediately from \cref{lem:UpperBound}
        \begin{equation*}
            \lim_{n \to \infty} \P{\disc(A_1,\ldots,A_n) \leq O(\sqrt{rm^3})} = 1.
        \end{equation*}
        For the lower bound, we consider the Gram matrix of $\overline{W}_1,\ldots,\overline{W}_n$ scaled by $\frac{1}{\sqrt{r}}$ and denote it by $\overline{M}$. By \cref{lem:WishartNorm}, the $\psi_1$-norm of $\frac{1}{\sqrt{r}} \overline{W}_i$ is at most $O(1)$, and by \cref{lem:WishartConcentration}, the squared Frobenius norm of $\frac{1}{\sqrt{r}}\overline{W}_i$ is concentrated around $m^2$. Therefore, we can apply \cref{lem:GramSpectrum} as in the proof of \cref{thm:GeneralDiscrepancy} to conclude that
        \begin{equation}
            \label{eq:WishartProof3}
            \lambda_{\min}(\overline{M}) \geq \Omega(m^2)
        \end{equation}
        asymptotically almost surely. Let $M$ denote the Gram matrix of $W_1,\ldots,W_n$ scaled by $\frac{1}{\sqrt{r}}$. The relationship between the entries of $M$ and $\overline{M}$ is given by
        \begin{align*}
            \overline{M}_{ij} &= M_{ij} - \inp{G_iG_i^T}{I_m} - \inp{I_m}{G_jG_j^T} + rm \\ &= M_{ij} - \norm{G_i}_F^2 - \norm{G_j}_F^2 + rm \\ &= M_{ij} - (\norm{G_i}_F^2 - \E\norm{G_i}_F^2) - (\norm{G_j}_F^2 - \E\norm{G_j}_F^2) - rm,
        \end{align*}
        where in the last line we used that $\E\norm{G_i}_F^2 = rm$. In matrix terms, we have that
        \begin{equation*}
            \overline{M} = M - rm(y1_n^T + 1_ny^T + 1_n1_n^T) = M - rm(y + 1_n)(y + 1_n)^T + rmyy^T,
        \end{equation*}
        where $1_n$ denotes the $n$-dimensional all-ones vector and $y$ is the $n$-dimensional vector with components $y_i \colonequals \frac{1}{rm}(\norm{G_i}_F^2 - \E\norm{G_i}_F^2)$ for $i = 1,\ldots,n$. Applying Weyl's inequality, we see that
        \begin{equation}
            \label{eq:WishartProof4}
            \lambda_{\min}(M) \geq \lambda_{\min}(\overline{M}) - rm\lambda_{\max}(yy^T) = \lambda_{\min}(\overline{M}) - \frac{1}{rm}\sum_{i=1}^n y_i^2,
        \end{equation}
        where we used that $\lambda_{\min}((y + 1_n)(y + 1_n)^T) \geq 0$. A straightforward calculation shows that $\E{y_i^2} = 2rm$. By the law of large numbers, we conclude that $\sum_{i=1}^n y_i^2 = O(nrm)$ asymptotically almost surely. Combining this with \eqref{eq:WishartProof3} and \eqref{eq:WishartProof4} yields
        \begin{equation*}
            \lambda_{\min}(M) \geq \Omega(m^2) - O(n) \geq \Omega(m^2)
        \end{equation*}
        if $n \ll m^2$. Keeping in mind that $M$ corresponds to the Gram matrix of $W_1,\ldots,W_n$ scaled by $\frac{1}{\sqrt{r}}$, it follows from \cref{lem:GramBound} that
        \begin{equation*}
            \lim_{n \to \infty} \P{\disc(A_1,\ldots,A_n) \geq \Omega(\sqrt{rnm})} = 1.
            \qedhere
        \end{equation*}
    \end{proof}

    \section{Analysis of the MHC algorithm}
    \label{sec:GeneralAnalysis}

    In this section, we carry out the analysis of \cref{alg:MatrixHyperbolicCosine} for random inputs, which leads to \cref{thm:GeneralAnalysis}. We begin with some preliminaries on transcendental matrix functions. The matrix exponential of $X \in \R^{m \times m}$ is defined as
    \begin{equation*}
        \exp{X} \colonequals \sum_{k=0}^\infty \frac{X^k}{k!}.
    \end{equation*}
    If $X$ is symmetric, it can be computed via the eigendecomposition $X = QDQ^T$, where $Q$ is an orthonormal matrix with the eigenvectors as columns and $D$ is a diagonal matrix of the eigenvalues. Then $\exp{X} = Q\exp{D}Q^T$, where $\exp{D}$ is a diagonal matrix with entries $\exp{\lambda_i(X)}$. In particular, this shows that the spectrum of $\exp{X}$ is given by
    \begin{equation}
        \label{eq:MatrixExponential}
        \lambda_i(\exp{X}) = \exp{\lambda_i(X)}.
    \end{equation}
    The matrix hyperbolic sine and cosine of $X \in \R^{m \times m}$ are defined as
    \begin{equation*}
        \sinh(X) \colonequals \frac{\exp{X} - \exp{-X}}{2}, \quad \cosh(X) \colonequals \frac{\exp{X} + \exp{-X}}{2}.
    \end{equation*}
    It is not hard to verify that many properties for the scalar hyperbolic functions also apply to the matrix-valued counterparts. We summarize the properties that are essential for our analysis in the following lemma.

    \begin{lemma}
        \label{lem:HyperbolicProperties}
        Let $X \in \R^{m \times m}$ be a symmetric matrix. Then the following hold
        \begin{enumerate}[label=(\roman*)]
            \item $\norm{\cosh(X) - I_m} = \cosh(\norm{X}) - 1$, \label{lem:HyperbolicProperties1}
            \item $\norm{\sinh(X) - X} \leq \norm{X}^3$ if $\norm{X} \leq 1$, \label{lem:HyperbolicProperties2}
            \item $\norm{\sinh(X)}_* = \sum_{i=1}^m \abs{\sinh(\lambda_i(X))}$, \label{lem:HyperbolicProperties3}
            \item $\tr{\cosh(X)} = \sum_{i=1}^m \cosh(\lambda_i(X))$. \label{lem:HyperbolicProperties4}
        \end{enumerate}
    \end{lemma}

    The first two properties follow from the power series expansions of the matrix hyperbolic functions, and the last two properties follow from \eqref{eq:MatrixExponential} and the linearity of trace; we omit the details. The Golden-Thompson inequality
    \begin{equation}
        \label{eq:GoldenThompsonInequality}
        \tr\exp{X + Y} \leq \inp{\exp{X}}{\exp{Y}}
    \end{equation}
    addresses the main property of scalar transcendental functions that is not inherited by their matrix versions, namely that $\exp(X + Y) \neq \exp(X)\exp(Y)$ in general.
    It plays a key role in our analysis of \cref{alg:MatrixHyperbolicCosine}, as it allows us to bound the value of the potential function we work with when it is applied to sums of matrices.

    \begin{lemma}
        \label{lem:GoldenThompsonInequality}
        For symmetric matrices $X,Y \in \R^{m \times m}$ holds
        \begin{equation*}
            \tr\cosh(X + Y) \leq \inp{\cosh(X)}{\cosh(Y)} + \inp{\sinh(X)}{\sinh(Y)}.
        \end{equation*}
    \end{lemma}

    \begin{proof}
        We expand and apply the Golden-Thompson inequality \eqref{eq:GoldenThompsonInequality} to obtain
        \begin{align*}
            \tr\cosh(X + Y) &= \frac{1}{2}(\tr\exp{X + Y} + \tr\exp{-X - Y}) \\ &\leq \frac{1}{2}(\inp{\exp{X}}{\exp{Y}} + \inp{\exp{-X}}{\exp{-Y}}),
        \end{align*}
        which yields the claim after some simple algebraic manipulations.
    \end{proof}

    We note that in the scalar case where $m = 1$, the above inequality holds as an equality and is the standard sum rule for the hyperbolic cosine. For our analysis, we further rely on the Hölder inequality for Schatten norms
    \begin{equation}
        \label{eq:HoelderInequality}
        \inp{X}{Y} \leq \norm{X}\norm{Y}_*,
    \end{equation}
    and the inner product inequality
    \begin{equation}
        \label{eq:MatrixInequality}
        \inp{X}{Y} \leq \inp{X}{Z}
    \end{equation}
    for symmetric matrices $X,Y,Z \in \R^{m \times m}$ with $X \succeq 0$ and $Z \succeq Y$, see Lemma 2.2 in \cite{Tsuda05}. Here and in the following, we denote by $\succeq$ the Loewner order (defined as $X \succeq Y$ if and only if $X - Y$ is positive semidefinite).

    \subsection{Drift analysis for the potential}

    Let us denote the potential function in \cref{alg:MatrixHyperbolicCosine} by $\Phi(X) \colonequals \tr\cosh(\alpha X)$ for $X \in \R^{m \times m}$, where $\alpha > 0$ is the parameter of the algorithm whose value will be determined later. Throughout this section, we assume that $(A_t)_{t \in \N}$ is a sequence of independent copies of a random matrix $A$ with $\norm{A} \leq 1$ that satisfies Conditions \eqref{eq:MatrixAntiConcentration} and \eqref{eq:Unbiasedness}, and let $(x_t)_{t \in \N}$ denote the signs generated by \cref{alg:MatrixHyperbolicCosine} when run on input $(A_t)_{t \in \N}$. Note that $M_t \colonequals \sum_{i=1}^t x_iA_i$ for $t \in \N$ defines a Markov chain on the state space of symmetric matrices, and the potential $\Phi$ defines a Lyapunov function that maps each state to a real number, giving rise to the real-valued random process $\Phi_t \colonequals \Phi(M_t)$ for $t \in \N$. By convention, we let $M_0$ be the $m \times m$ zero matrix so that $\Phi_0 \colonequals \Phi(M_0) = m$. In the following lemma, we examine the \emph{drift} of $(\Phi_t)_{t \in \N}$ at time $t \in \N$, that is, the random variable $\E(\Phi_t - \Phi_{t-1} \mid A_1,\ldots,A_{t-1})$. We abbreviate the conditional expectation with respect to $A_1,\ldots,A_{t-1}$ by $\E[t]$ so that $\E[t]{\Phi_t - \Phi_{t-1}} = \E{\Phi_t - \Phi_{t-1} \mid A_1,\ldots,A_{t-1}}$.

    \begin{lemma}
        \label{lem:BoundedDrift}
        Suppose that $\alpha \ll (rm)^{-1/2}$. Then for $t \in \N$,
        \begin{equation*}
            \E[t]{\Phi_t - \Phi_{t-1} \mid \Phi_{t-1} = x} \leq
            \begin{cases}
                O(m^{-1}) & \text{if}~x \leq 2m, \\
                -\Omega(m^{-2})x & \text{if}~x \geq 2m,
            \end{cases}
        \end{equation*}
        where the implicit constants depend only on the parameters $\eta$ and $\theta$.
    \end{lemma}

    \begin{proof}
        Throughout this proof, we drop the condition in the expectation $\E[t]{\Phi_t - \Phi_{t-1} \mid \Phi_{t-1} = x}$ and simply write $\E[t]{\Phi_t - \Phi_{t-1}}$ instead; we treat $\Phi_{t-1}$ as a deterministic variable.

        \textit{Step 1: Breaking up the increment.} We analyse the increment as
        \begin{align*}
            \Phi_t - \Phi_{t-1} &= \tr\cosh(\alpha M_{t-1} + \alpha x_tA_t) - \tr\cosh(\alpha M_{t-1})
            \intertext{apply \cref{lem:GoldenThompsonInequality} to obtain}
            &\leq \inp{\cosh(\alpha M_{t-1})}{\cosh(\alpha x_tA_t) - I_m} + \inp{\sinh(\alpha M_{t-1})}{\sinh(\alpha x_tA_t)}
            \intertext{and use parity properties of the hyperbolic functions}
            &= \inp{\cosh(\alpha M_{t-1})}{\cosh(\alpha A_t) - I_m} + x_t\inp{\sinh(\alpha M_{t-1})}{\sinh(\alpha A_t)}.
        \end{align*}
        From this we conclude that the choice of $x_t \in \{\pm1\}$ by \cref{alg:MatrixHyperbolicCosine} achieves
        \begin{equation*}
            \Phi_t - \Phi_{t-1} \leq \underbrace{\inp{\cosh(\alpha M_{t-1})}{\cosh(\alpha A_t) - I_m}}_{\equalscolon T_1} - \underbrace{\abs{\inp{\sinh(\alpha M_{t-1})}{\sinh(\alpha A_t)}}}_{\equalscolon T_2}.
        \end{equation*}
        We now investigate the expectations of the two terms $T_1$ and $T_2$ separately.

        \textit{Step 2: Bounding the first term.} The power series expansion of the hyperbolic cosine shows that $\cosh(\alpha A_t) - I_m = (\cosh(\alpha A_t) - I_m)P_{\row(A_t)}$. Since
        \begin{equation*}
            \norm{\cosh(\alpha A_t) - I_m}P_{\row(A_t)} \succeq (\cosh(\alpha A_t) - I_m)P_{\row(A_t)}
        \end{equation*}
        and $\cosh(\alpha M_{t-1}) \succeq 0$, we can apply \eqref{eq:MatrixInequality} to obtain
        \begin{align*}
            \E[t]{T_1} &\leq \E[t]{\inp{\cosh(\alpha M_{t-1})}{\norm{\cosh(\alpha A_t) - I_m}P_{\row(A_t)}}} \\
            &\leq (\cosh(\alpha) - 1)\inp{\cosh(\alpha M_{t-1})}{\E P_{\row(A_t)}} \tag{by Property \ref{lem:HyperbolicProperties1}} \\
            &\leq (\cosh(\alpha) - 1)\tr\cosh(\alpha M_{t-1})\norm{\E P_{\row(A_t)}} \tag{by Hölder's inequality \eqref{eq:HoelderInequality}} \\
            &\leq (\cosh(\alpha) - 1)\theta\frac{r}{m}\Phi_{t-1} \tag{by Condition \eqref{eq:Unbiasedness}} \\
            &\leq \alpha^2\theta\frac{r}{m}\Phi_{t-1}. \tag{as $\cosh(x) - 1 \leq x^2$}
        \end{align*}
        By simply discarding the second term, we obtain in the case $\Phi_{t-1} \leq 2m$ that
        \begin{equation*}
            \E[t]{\Phi_t - \Phi_{t-1}} \leq 2\alpha^2\theta r,
        \end{equation*}
        which is of order $O(m^{-1})$ when $\alpha = O((rm)^{-1/2})$. In the case $\Phi_{t-1} \geq 2m$, we must also take the second term into account.

        \textit{Step 3: Bounding the second term.} We apply the reverse triangle inequality
        \begin{equation*}
            T_2 \geq \underbrace{\abs{\inp{\sinh(\alpha M_{t-1})}{\alpha A_t}}}_{\equalscolon S_1} - \underbrace{\abs{\inp{\sinh(\alpha M_{t-1})}{\sinh(\alpha A_t) - \alpha A_t}}}_{\equalscolon S_2}
        \end{equation*}
        to decompose $T_2$ into two terms, which we proceed to bound individually. For the term $S_1$, we use Condition \eqref{eq:MatrixAntiConcentration} to get
        \begin{align*}
            \E[t]{S_1} &\geq \alpha\eta\sqrt{\frac{r}{m^3}} \norm{\sinh(\alpha M_{t-1})}_* \\
            &= \alpha\eta\sqrt{\frac{r}{m^3}} \sum_{i=1}^m \abs{\sinh(\lambda_i(\alpha M_{t-1}))} \tag{by Property \ref{lem:HyperbolicProperties3}} \\
            &\geq \alpha\eta\sqrt{\frac{r}{m^3}} \sum_{i=1}^m (\cosh(\lambda_i(\alpha M_{t-1})) - 1) \tag{as $\abs{\sinh(x)} \geq \cosh(x) - 1$} \\
            &= \alpha\eta\sqrt{\frac{r}{m^3}}(\Phi_{t-1} - m). \tag{by Property \ref{lem:HyperbolicProperties4}}
        \end{align*}
        For the term $S_2$, we argue as above. The power series expansion of the hyperbolic sine shows that $\sinh(\alpha A_t) - \alpha A_t = (\sinh(\alpha A_t) - \alpha A_t)P_{\row(A_t)}$. Since
        \begin{equation*}
            \norm{\sinh(\alpha A_t) - \alpha A_t}P_{\row(A_t)} \succeq (\sinh(\alpha A_t) - \alpha A_t)P_{\row(A_t)}
        \end{equation*}
        and $\abs{\sinh(\alpha M_{t-1})} \succeq 0$, we can apply \eqref{eq:MatrixInequality} to obtain
        \begin{align*}
            \E[t]{S_2} &\leq \E[t]{\inp{\abs{\sinh(\alpha M_{t-1})}}{\norm{\sinh(\alpha A_t) - \alpha A_t}P_{\row(A_t)}}} \\
            &\leq \alpha^3\inp{\abs{\sinh(\alpha M_{t-1})}}{\E P_{\row(A_t)}} \tag{by Property \ref{lem:HyperbolicProperties2}} \\
            &\leq \alpha^3\norm{\sinh(\alpha M_{t-1})}_*\norm{\E P_{\row(A_t)}} \tag{by Hölder's inequality \eqref{eq:HoelderInequality}} \\
            &\leq \alpha^3\theta\frac{r}{m}\norm{\sinh(\alpha M_{t-1})}_* \tag{by Condition \eqref{eq:Unbiasedness}} \\
            &\leq \alpha^3\theta\frac{r}{m}\Phi_{t-1}. \tag{as $\abs{\sinh(x)} \leq \cosh(x)$}
        \end{align*}
        \textit{Step 4: Putting everything together.} Putting all bounds together, we find that
        \begin{align*}
            \E[t]{\Phi_t - \Phi_{t-1}} &\leq (\alpha^2 + \alpha^3)\theta\frac{r}{m}\Phi_{t-1} - \alpha\eta\sqrt{\frac{r}{m^3}}(\Phi_{t-1} - m),
            \intertext{which in the case $\Phi_{t-1} \geq 2m$ can be estimated by}
            &\leq \left(2\alpha^2\theta\frac{r}{m} - \frac{1}{2}\alpha\eta\sqrt{\frac{r}{m^3}}\right)\Phi_{t-1}.
        \end{align*}
        The latter is bounded above by $-\frac{\eta^2}{32\theta m^2}\Phi_{t-1} = -\Omega(m^{-2})\Phi_{t-1}$ when $\alpha \leq \frac{\eta}{8\theta\sqrt{rm}}$, completing the proof.
    \end{proof}

    Now, we transform the knowledge about the drift of $(\Phi_t)_{t \in \N}$ into bounds on the discrepancy and prefix discrepancy, respectively, achieved by \cref{alg:MatrixHyperbolicCosine}.

    \begin{proof}[Proof of \cref{thm:GeneralAnalysis}]
        By the law of total expectation, we have that
        \begin{equation}
            \label{eq:GeneralAnalysisProof1}
            \E{\Phi_t} = \E{\E[t]{\Phi_t}} = \int_0^\infty (\E[t]{\Phi_t - \Phi_{t-1} \mid \Phi_{t-1} = x} + x) d\P{\Phi_{t-1} = x}.
        \end{equation}
        Using the bounds on the drift of $(\Phi_t)_{t \in \N}$ in \cref{lem:BoundedDrift}, we conclude that
        \begin{equation}
            \label{eq:GeneralAnalysisProof2}
            \int_0^{2m} (\E[t]{\Phi_t - \Phi_{t-1} \mid \Phi_{t-1} = x} + x) d\P{\Phi_{t-1} = x} \leq O(m)
        \end{equation}
        and
        \begin{equation}
            \label{eq:GeneralAnalysisProof3}
            \int_{2m}^\infty (\E[t]{\Phi_t - \Phi_{t-1} \mid \Phi_{t-1} = x} + x) d\P{\Phi_{t-1} = x} \leq (1 - \varepsilon) \E{\Phi_{t-1}}
        \end{equation}
        for some $\varepsilon = \Omega(m^{-2})$. Combining the bounds \eqref{eq:GeneralAnalysisProof2}, \eqref{eq:GeneralAnalysisProof3} with \eqref{eq:GeneralAnalysisProof1} yields that $\E{\Phi_t} \leq O(m) + (1 - \varepsilon)\E{\Phi_{t-1}}$, and inductively follows that
        \begin{equation*}
            \E{\Phi_t} \leq O(m) \sum_{k=0}^t (1 - \varepsilon)^k \leq O(m) \sum_{k=0}^\infty (1 - \varepsilon)^k = \frac{O(m)}{\varepsilon} = O(m^3).
        \end{equation*}
        Thus, by Markov's inequality,
        \begin{equation}
            \label{eq:GeneralAnalysisProof4}
            \P{\Phi_t \leq O(m^4)} \geq 1 - m^{-1}.
        \end{equation}
        Using Property \ref{lem:HyperbolicProperties1} of \cref{lem:HyperbolicProperties}, we obtain that
        \begin{equation*}
            \exp{\norm{X}} - 1 \leq \cosh(\norm{X}) - 1 = \norm{\cosh(X) - I_m} \leq \tr(\cosh(X) - I_m),
        \end{equation*}
        and thereby conclude that
        \begin{equation}
            \label{eq:GeneralAnalysisProof5}
            \norm{X} \leq \frac{1}{\alpha}\log{\Phi(X)} = O(\sqrt{rm})\log{\Phi(X)}.
        \end{equation}
        Therefore, on the event $\Phi_n \leq O(m^4)$, we have that $\norm{M_n} \leq O(\sqrt{rm}\log{m})$. Then \eqref{eq:GeneralAnalysisProof4} implies Part \ref{thm:GeneralAnalysis2}. To prove Part \ref{thm:GeneralAnalysis1}, we note that the drift of $(\Phi_t)_{t \in \N}$ is uniformly bounded by $O(m^{-1})$, according to \cref{lem:BoundedDrift}. So we have that
        \begin{equation*}
            \E{\Phi_t} = \E{\E[t]{\Phi_t}} \leq \E{\Phi_{t-1}} + O(m^{-1}),
        \end{equation*}
        and inductively it follows that $\E{\Phi_t} = O(tm^{-1})$. Thus, by Markov's inequality
        \begin{equation*}
            \P{\Phi_t \leq O(n^3m^{-1})} \geq 1 - n^{-2}
        \end{equation*}
        for $t = 1,\ldots,n$ and taking a union bound yields
        \begin{equation*}
            \P{\max_{t=1,\ldots,n} \Phi_t \leq O(n^3m^{-1})} \geq 1 - n^{-1}.
        \end{equation*}
        Similarly as above, this and \eqref{eq:GeneralAnalysisProof5} lead to Part \ref{thm:GeneralAnalysis1}.
    \end{proof}

    \section{Applications of the MHC algorithm}
    \label{sec:Applications}

    In this section, we specialize \cref{thm:GeneralAnalysis} to different random matrix distributions, which amounts to verifying Condition \eqref{eq:MatrixAntiConcentration} and \eqref{eq:Unbiasedness}. While checking the unbiasedness property \eqref{eq:Unbiasedness} is usually straightforward, verifying the matrix anti-concentration inequality \eqref{eq:MatrixAntiConcentration} requires more work. We begin by introducing a tool that provides a sufficient condition for \eqref{eq:MatrixAntiConcentration} to hold. A random vector $X$ that takes values in $\R^d$ is said to satisfy the \emph{Khintchine anti-concentration inequality} with parameter $\eta > 0$ if for all $y \in \R^d$ holds
    \begin{equation}
        \label{eq:KhintchineAntiConcentration}
        \E\abs{\inp{X}{y}} \geq \eta\norm{y}_2.
    \end{equation}
    For an $m \times m$ random matrix $A$, we observe that if its symmetric vectorization $\symvec(A)$ satisfies \eqref{eq:KhintchineAntiConcentration}, then $A$ satisfies the matrix anti-concentration inequality \eqref{eq:MatrixAntiConcentration} with parameter $\eta/\sqrt{m}$. This follows immediately from the definition along with the inequality $\norm{Y}_F \geq \norm{Y}_*/\sqrt{m}$ for $Y \in \R^{m \times m}$.

    \begin{lemma}
        \label{lem:KhintchineAntiConcentration}
        Let $X$ be a random vector in $\R^d$ that satisfies \eqref{eq:KhintchineAntiConcentration} with parameter $\eta$, and assume that the spectral norm of its covariance matrix is bounded by $\rho > 0$. For any event $E$ with $1 - \P{E} < \frac{\eta^2}{\rho}$, the random vector $X$ conditioned on $E$ (or $X$ trunacted to $E$) satisfies \eqref{eq:KhintchineAntiConcentration} with parameter $\eta' = \eta - \sqrt{\rho(1 - \P{E})} > 0$.
    \end{lemma}

    \begin{proof}
        For $y \in \R^d$ holds $\E\abs{\inp{X}{y}} \geq \eta\norm{y}_2$ by assumption. Then, we have
        \begin{align*}
            \E{\abs{\inp{X}{y}}\ind{E}} &\geq \eta\norm{y}_2 - \E{\abs{\inp{X}{y}}\ind{E^c}}
            \intertext{applying the Cauchy-Schwarz inequality yields}
            &\geq \eta\norm{y}_2 - \sqrt{\E{\inp{X}{y}^2}\P{E^c}}
            \intertext{and bounding the remaining expectation as $y^T\cov(X)y \leq \rho\norm{y}_2^2$ gives}
            &\geq \left(\eta - \sqrt{\rho(1 - \P{E})}\right)\norm{y}_2,
        \end{align*}
        which shows the claim for $X$ truncated to $E$. Since the expectation of $\abs{\inp{X}{y}}$ given $E$ is always greater than $ \E{\abs{\inp{X}{y}}\ind{E}}$, the conditional version follows.
    \end{proof}

    \subsection{Hypercontractive Wigner ensemble}

    In this subsection, we consider Wigner matrices with hypercontractive entries. A random variable $X$ is said to be \emph{$\kappa$-hypercontractive} for $0 < \kappa < 1$ if its fourth moment is finite and satisfies
    \begin{equation}
        \label{eq:Hypercontractivity}
        \kappa^4\E{X^4} \leq \E{X^2}^2.
    \end{equation}
    In other words, a random variable is hypercontractive if its fourth moment is small compared to its second moment. For a list of basic properties of hypercontractive random variables we refer to Gopalan, O’Donnell, Wu and Zuckerman\cite{Gopalan10}, even though they work with a stricter notion of hypercontractivity. Here we are mostly interested in an anti-concentration property of hypercontractive random variables.

    \begin{lemma}[Proposition III.6 of \cite{Gopalan10}]
        \label{lem:HypercontractiveAntiConcentration}
        If $X$ is a $\kappa$-hypercontractive random variable, then for $0 < t < 1$ holds
        \begin{equation*}
            \P{\abs{X} \geq t\E{X^2}^{1/2}} \geq \kappa^4(1 - t^2)^2.
        \end{equation*}
    \end{lemma}

    \begin{lemma}
        \label{lem:Hypercontractivity}
        Let $X$ be a random vector in $\R^d$ with independent, centered and $\kappa$-hypercontractive entries, then $X$ satisfies the Khintchine anti-concentration inequality \eqref{eq:KhintchineAntiConcentration} with parameter $\eta = \Omega(\sigma\kappa^4)$, where $\sigma^2 = \min_i \E{X_i^2}$.
    \end{lemma}

    \begin{proof}
        Let $y \in \R^d$ be an arbitrary vector. Then $\inp{X}{y}$ is $\kappa$-hypercontractive as a linear combination of $\kappa$-hypercontractive random variables. From \cref{lem:HypercontractiveAntiConcentration} follows that
        \begin{equation*}
            \P{\abs{\inp{X}{y}} \geq \frac{1}{2}\left(\E{\inp{X}{y}^2}\right)^{1/2}} \geq \Omega(\kappa^4).
        \end{equation*}
        Since the entries of $X$ are independent and centered by assumption, we have that $\inp{X}{y}^2  = \sum_{i=1}^d \E{X_i^2}y_i^2 \geq \sigma^2\norm{y}_2^2$ and therefore
        \begin{equation*}
            \E{\abs{\inp{X}{y}}} \geq \P{\abs{\inp{X}{y}} \geq \frac{\sigma}{2}\norm{y}_2} \frac{\sigma}{2}\norm{y}_2 \geq \Omega(\kappa^4\sigma)\norm{y}_2,
        \end{equation*}
        completing the proof.
    \end{proof}

    \begin{theorem}
        \label{thm:HypercontractiveWignerEnsemble}
        Let $A$ be an $m \times m$ Wigner matrix with centered hypercontractive entries whose second moment is uniformly bounded by $C_1^2/m \leq \E{A_{ij}^2} \leq C_2^2/m$ for some sufficiently small constant $C_1,C_2 > 0$. Then $A$ conditioned on $\norm{A} \leq 1$ satisfies Conditions \eqref{eq:MatrixAntiConcentration} and \eqref{eq:Unbiasedness} with $r = m$ for parameters $\eta,\theta > 0$ depending only on the distribution of its entries.
    \end{theorem}

    \begin{proof}
        When $r = m$ the unbiasedness condition \eqref{eq:Unbiasedness} holds trivially with $\theta = 1$ as orthogonal projection matrices have spectral norm at most one. It remains to check that $A$ given $\norm{A} \leq 1$ satisfies the matrix anti-concentration condition \eqref{eq:MatrixAntiConcentration} with $r = m$, that is, for all symmetric matrices $Y \in \R^{m \times m}$ holds $\E\abs{\inp{A}{Y}} \geq \Omega(1/m)\norm{Y}_*$. Using the observation made below \eqref{eq:KhintchineAntiConcentration}, it suffices to show instead that $X \colonequals \symvec(A)$ given $\norm{A} \leq 1$ fulfills the Khintchine anti-concentration condition \eqref{eq:KhintchineAntiConcentration} with parameter $\eta = \Omega(1/\sqrt{m})$. By assumption, we have that
        \begin{equation*}
            C_1^2/m \leq \E{A_{ij}^2} \leq C_2^2/m, \quad 1 \leq i,j \leq m
        \end{equation*}
        for some constants $C_1,C_2 > 0$. From \cref{lem:Hypercontractivity} follows that $X$ satisfies \eqref{eq:KhintchineAntiConcentration} with $\eta = \Omega(C_1/\sqrt{m})$. Since the covariance matrix of $X$ is a diagonal matrix with entries $\E{A_{ij}^2}$, its spectral norm is at most $C_2^2/m$ and \cref{lem:KhintchineAntiConcentration} implies that $X$ given $\norm{A} \leq 1$ satisfies \eqref{eq:KhintchineAntiConcentration} with $\eta = \Omega(C_1/\sqrt{m})$ if $\norm{A} \leq 1$ occurs with probability high enough. Due to the hypercontractivity, we have that $\E{A_{ij}^4} = O(C_2^4/m^2)$. Then, a result of Lata{\l}a \cite{Latala05} implies that $\E\norm{A} = O(C_2)$, which when combined with Markov's inequality gives the large deviation inequality $\P{\norm{A} \geq 1} = O(C_2)$. So choosing $C_2$ small enough allows us to achieve the desired probability for the event $\norm{A} \leq 1$ and finishes the proof.
    \end{proof}

    \subsection{Normalized Wishart ensemble}

    In this subsection, we consider normalized Wishart matrices to illustrate the rank dependence in Conditions \eqref{eq:MatrixAntiConcentration} and \eqref{eq:Unbiasedness}. To verify the matrix anti-concentration inequality \eqref{eq:MatrixAntiConcentration}, we rely on the following auxiliary result that follows from standard polynomial anti-concentration results, see for example Lovett \cite{Lovett10}.

    \begin{lemma}
        \label{lem:PolynomialAntiConcentration}
        Let $g$ be an $d$-dimensional Gaussian random vector and let $X$ be the vector of all degree $k$ monomials in the entries of $g$. Then $X$ satisfies \eqref{eq:KhintchineAntiConcentration} with a parameter $\eta > 0$ depending only on $k$.
    \end{lemma}

    \begin{theorem}
        \label{thm:NormalizedWishartEnsemble}
        Let $W$ be an $m \times m$ Wishart matrix of rank $r \leq m$. The normalized Wishart matrix $W/\norm{W}$ satisfies Conditions \eqref{eq:MatrixAntiConcentration} and \eqref{eq:Unbiasedness} for constant $\eta,\theta > 0$.
    \end{theorem}

    \begin{proof}
        By definition, we have $W = GG^T$ for some $m \times r$ Gaussian matrix $G$. First, we verify the unbiasedness condition \eqref{eq:Unbiasedness}. By the rotational invariance of the Gaussian distribution, we conclude that $\row(W)$ is a uniformly distributed $r$-dimensional subspace of $\R^m$. Consequently, the orthogonal projection $P_{\row(W)}$ has the same distribution as $U^TU$, where $U$ consists of the first $r$ columns of a Haar distributed $m \times m$ orthogonal matrix. From Lemma 3.3 of Meckes \cite{Meckes06} follows that
        \begin{equation*}
            \E P_{\row(W)} = \frac{r}{m}I_m,
        \end{equation*}
        and hence $W/\norm{W}$ satisfies \eqref{eq:Unbiasedness} with $\theta = 1$. Next, we check the matrix anti-concentration condition \eqref{eq:MatrixAntiConcentration}. Let $Y \in \R^{m \times m}$ be an arbitrary matrix. Note that $\norm{W} = \norm{G}^2$ and therefore truncation to the event $\norm{G} \leq 3\sqrt{m}$ leads to
        \begin{equation}
            \label{eq:NormalizedWishartProof}
            \E\abs{\frac{1}{\norm{W}}\inp{W}{X}} \geq \frac{1}{9m} \E{\abs{\inp{GG^T}{X}}\ind{\norm{G} \leq 3\sqrt{m}}}.
        \end{equation}
        Denote the columns of $G$ by $g_1,\ldots,g_r$ and let $h_i$ be the vector of all degree two monomials in the entries of $g_i$ for $i = 1,\ldots,r$. Substituting $GG^T = \sum_{i=1}^r g_ig_i^T$ yields
        \begin{equation*}
            \inp{GG^T}{Y} = \sum_{i=1}^r g_i^TYg_i = \sum_{i=1}^r \inp{h_i}{y},
        \end{equation*}
        where $y \colonequals \symvec(Y)$. From Jensen's inequality and \cref{lem:PolynomialAntiConcentration} follows that
        \begin{equation*}
            \E \abs{\inp{GG^T}{Y}} \geq \sqrt{r}\E \abs{\inp{h_i}{y}} \geq \Omega(\sqrt{r})\norm{y}_2.
        \end{equation*}
        Since $\norm{G} \leq 3\sqrt{m}$ with probability at least $1 - 2\exp{-\Omega(m)}$ by Lemma 7.3.3 of Vershynin \cite{Vershynin18} and the covariance matrix of $\symvec(GG^T)$ has spectral norm at most $O(1)$, for $m$ large enough \cref{lem:KhintchineAntiConcentration} implies that
        \begin{equation}
            \E{\abs{\inp{GG^T}{Y}}\ind{\norm{G} \leq 3\sqrt{m}}} \geq \Omega(\sqrt{r})\norm{y}_2 \geq \Omega(\sqrt{r/m})\norm{Y}_*.
        \end{equation}
        This combined with \eqref{eq:NormalizedWishartProof} yields \eqref{eq:MatrixAntiConcentration} with $\eta = \Omega(\sqrt{r/m^3})$.
    \end{proof}

    \section*{Acknowledgments}
\addcontentsline{toc}{section}{Acknowledgments}

    We thank Afonso Bandeira, David Gamarnik, and Daniel Spielman for helpful discussions.
We also thank Antoine Maillard and Peng Zhang for pointing out several errors in earlier versions of this paper, in addition to general discussions.
Lastly, we are grateful to an anonymous reviewer for pointing out the application of Lemma~\ref{lem:SubgaussianVectorization} in the context of matrix discrepancy.

    \clearpage

    \appendix

    \section{Omitted proofs}
    \label{sec:Appendix}

    \begin{proof}[Proof of \cref{lem:GaussianMatrix2}]
        Let $x_{ij}$ and $y_{ij}$ denote the entries of $X$ and $Y$, respectively. Further, let $f_{ij}$ denote the joint density of $x_{ij}$ and $y_{ij}$, that is, the density two jointly normal random variables with mean zero, variance $\sigma_{ij}^2$ (where $\sigma_{ij}^2 = 2$ if $i = j$ and $\sigma_{ij}^2 = 1$ otherwise) and correlation $\rho$. Since the entries of a GOE matrix are independent, it follows that the joint probability density function is given by
        \begin{equation*}
            \prod_{1 \leq i,j \leq m} f_{ij}(x_{ij},y_{ij}).
        \end{equation*}
        From \eqref{eq:BivariateNormalDistribution} we conclude that the diagonal entries contribute to the density by
        \begin{equation*}
            \prod_{1 \leq i = j \leq m} f_{ij}(x_{ij},y_{ij}) = \frac{1}{(4\pi\sqrt{1 - \rho^2})^m} \prod_{1 \leq i = j \leq m} \exp{-\frac{x_{ij}^2 - 2\rho x_{ij}y_{ij} + y_{ij}^2}{4(1 - \rho^2)}},
        \end{equation*}
        and the off-diagonal entries contribute to the density by
        \begin{equation*}
            \prod_{1 \leq i < j \leq m} f_{ij}(x_{ij},y_{ij}) = \frac{1}{(2\pi\sqrt{1 - \rho^2})^{m(m-1)/2}} \prod_{1 \leq i < j \leq m} \exp{-\frac{x_{ij}^2 - 2\rho x_{ij}y_{ij} + y_{ij}^2}{2(1 - \rho^2)}}.
        \end{equation*}
        Using symmetry of $X$ and $Y$, we can rewrite the product of these two terms as
        \begin{equation*}
            K_{m}^2 (1 - \rho^2)^{-m(m+1)/4} \exp{-\frac{1}{4(1 - \rho^2)}\sum_{1 \leq i,j \leq m} x_{ij}^2 - 2\rho x_{ij}y_{ij} + y_{ij}^2}.
        \end{equation*}
        The desired representation is obtained by noting that
        \begin{equation*}
            \sum_{1 \leq i,j \leq m} x_{ij}^2 - 2\rho x_{ij}y_{ij} + y_{ij}^2 = \tr(X^2 - 2\rho XY + Y^2).
            \qedhere
        \end{equation*}
    \end{proof}

    \begin{proof}[Proof of \cref{lem:GaussianEigenvalues2}]
        Let $S(\lambda)$ denote the set of all $2 \times 2$ symmetric matrices with spectrum $\{\lambda_1,\lambda_2\}$ for $\lambda \in \R^2$, and let $f$ denote the joint probability density function of $X$ and $Y$. Then, for any domain $D \subseteq \R^2_\geq \times \R^2_\geq$ the probability that the ordered eigenvalues of $X$ and $Y$ fall into $D$ is given by
        \begin{equation}
            \label{eq:EigenvalueProof1}
            \int_E f(X,Y) d(X,Y),
        \end{equation}
        where $d(X,Y) = \prod_{1 \leq i \leq j \leq 2} dX_{ij}dY_{ij}$ is the Lebesgue measure on the space of pairs of $2 \times 2$ symmetric matrices and $E \colonequals \{(X,Y) \in S(\lambda) \times S(\mu) : (\lambda,\mu) \in D\}$. Let the function $p$ be defined by \eqref{eq:GaussianEigenvalues} on $\R^2_\geq \times \R^2_\geq$. Our goal is to show that the integral of $p$ over $D$ provides an upper bound on \eqref{eq:EigenvalueProof1}. By the spectral theorem, for each $X \in S(\lambda)$ we can find an orthogonal matrix $Q \in \R^{2 \times 2}$ such that $X = Q^T \diag(\lambda) Q$, where $\diag(\lambda)$ denotes the diagonal matrix with entries $\lambda_1,\lambda_2$. The columns of $Q$ are given by normalized eigenvectors $v_1,v_2$ corresponding to $\lambda_1,\lambda_2$. Since the set of matrices without distinct eigenvalues has Lebesgue measure zero, we may assume that $\lambda_1 \neq \lambda_2$, in which case $v_1,v_2$ are unique up to signs. To obtain a unique representation, let us assume that $v_1$ lies above the $x$-axis and $v_2$ lies to the right of the $y$-axis. Then $v_1 = (\cos\theta,\sin\theta)^T$ and $v_2 = (\sin\theta,-\cos\theta)^T$ for some $\theta \in [0,\pi)$. In particular, we have $X = Q^T \diag(\lambda) Q$ with
        \begin{equation*}
            Q =
            \begin{pmatrix}
                \cos\theta & \sin\theta \\
                \sin\theta & -\cos\theta
            \end{pmatrix}
        \end{equation*}
        for some $\theta \in [0,\pi)$. Writing out $Q^T \diag(\lambda) Q$ explicitly yields the parametrization
        \begin{equation}
            \label{eq:EigenvalueProof2}
            P(\lambda,\theta) =
            \begin{pmatrix}
                \lambda_1\cos^2\theta + \lambda_2\sin^2\theta & (\lambda_1 - \lambda_2)\sin\theta\cos\theta \\
                (\lambda_1 - \lambda_2)\sin\theta\cos\theta & \lambda_1\sin^2\theta + \lambda_2\cos^2\theta
            \end{pmatrix}.
        \end{equation}
        Then, we can write $E = \{(P(\lambda,\theta),P(\mu,\psi)) : (\lambda,\mu) \in D, \ \theta,\psi \in [0,\pi)\}$. The map $P : \R^2 \times [0,\pi) \to S(\lambda)$ defined by \eqref{eq:EigenvalueProof2} is differentiable and bijective according to the previous discussion. Since symmetric matrices are uniquely determined by their upper triangular entries, we can interpret $P$ as a map from $\R^3$ to $\R^3$. Thus, by a change of variables and Fubini's theorem, we can transform \eqref{eq:EigenvalueProof1} into
        \begin{equation}
            \label{eq:EigenvalueProof3}
            \int_D \int_{[0,\pi) \times [0,\pi)} f(P(\lambda,\theta),P(\mu,\psi)) \abs{\det(J(\lambda,\theta,\mu,\psi))} d(\theta,\psi) d(\lambda,\mu),
        \end{equation}
        where $J(\lambda,\theta,\mu,\psi)$ denotes the Jacobian of the product map $P \times P$ at the point $(\lambda,\theta,\mu,\psi)$. Its determinant can be evaluated to
        \begin{equation*}
            \abs{\det(J(\lambda,\theta,\mu,\psi))} = \abs{(\lambda_2 - \lambda_1)(\mu_2 - \mu_1)} = \Delta(\lambda)\Delta(\mu).
        \end{equation*}
        Since the eigenvalues are ordered, it follows from Neumann's trace inequality that
        \begin{equation*}
            \tr(P(\lambda,\theta)P(\mu,\psi)) \leq \lambda_1\mu_1 + \lambda_2\mu_2 = \tr(\diag(\lambda)\diag(\mu))
        \end{equation*}
        and \cref{lem:GaussianMatrix2} implies that
        \begin{equation*}
            f(P(\lambda,\theta),P(\mu,\psi)) \leq f(\diag(\lambda),\diag(\mu)).
        \end{equation*}
        Integrating this inequality over the domain $[0,\pi) \times [0,\pi)$ yields
        \begin{equation*}
            \eqref{eq:EigenvalueProof3} \leq \int_D \pi^2 f(\diag(\lambda),\diag(\mu)) \Delta(\lambda)\Delta(\mu) d(\lambda,\mu).
        \end{equation*}
        Noting that $\pi^2 K_2^2 = C_2^2$ and applying \cref{lem:GaussianMatrix2} to get
        \begin{equation*}
            \pi^2 f(\diag(\lambda),\diag(\mu)) \Delta(\lambda)\Delta(\mu) = p(\lambda,\mu)
        \end{equation*}
        completes the proof of the two-dimensional case.

        The proof of the general case follows along the same lines. By the spectral theorem, we can represent $X = Q^T \diag(\lambda) Q$ for some orthogonal matrix $Q \in \R^{m \times m}$. The set of all $m \times m$ orthogonal matrices $\mathcal{O}(m)$ equipped with matrix multiplication forms a Lie group. It is well known that the Lie algebra $\mathfrak{o}(m)$ of $\mathcal{O}(m)$ consists of the $m \times m$ skew-symmetric matrices. Furthermore, the exponential map from $\mathfrak{o}(m)$ to $\mathcal{O}(m)$ is surjective, that is, for each $Q \in \mathcal{O}(m)$ we can find a skew-symmetric matrix $A \in \R^{m \times m}$ such that $X = \exp{A} \diag(\lambda) \exp{-A}$. This provides a parametrization of $\mathcal{O}(m)$ using $m(m - 1)/2$ parameters. Note that every skew-symmetric matrix is uniquely determined by its upper-diagonal entries $(a_{ij})_{1 \leq i < j \leq n}$. However, making this parametrization bijective and differentiable requires some further technical details, which can be found in Section 2.5.2 of \cite{Anderson10}.
    \end{proof}

    \begin{proof}[Proof of \cref{lem:SmallNormProbability}]
        Since the spectral norm of a symmetric matrix corresponds to the maximum absolute value of its eigenvalues, it follows from \cref{lem:GaussianEigenvalues1} that
        \begin{equation}
            \label{eq:SmallNormProof1}
            \P{\norm{X} \leq \delta} = C_m \int_{D_\delta} \exp{-\frac{\norm{\lambda}_2^2}{4}} \Delta(\lambda) d\lambda,
        \end{equation}
        where $D_\delta \colonequals \{ \lambda \in \R^m : -\delta \leq \lambda_1 \leq \ldots \leq \lambda_m \leq \delta \}$ and $C_m$ is defined as in \eqref{eq:NormalizationConstant2}. Since $\delta = o(\sqrt{m})$, we can estimate $\exp{-\norm{\lambda}_2^2/4} \geq \exp{-o(1)m^2/4}$ for $\lambda \in D_\delta$. Therefore, it remains to bound the integral of $\Delta(\lambda)$ over $D_\delta$. By shifting and rescaling the integration variable $\lambda$ and using symmetry, we find that
        \begin{equation}
            \label{eq:SmallNormProof2}
            \int_{D_\delta} \Delta(\lambda) d\lambda = (2\delta)^{m(m-1)/2} \frac{1}{m!} \int_{[0,1]^m} \abs{\Delta(\lambda)} d\lambda.
        \end{equation}
        The latter integral is well known and can be evaluated in closed form. According to Selberg's integral formula, see Theorem 2.5.8 in \cite{Anderson10}, we have
        \begin{equation}
            \label{eq:SmallNormProof3}
            \frac{1}{m!} \int_{[0,1]^m} \abs{\Delta(\lambda)} d\lambda = \prod_{i=0}^{m-1} \frac{\Gamma((i + 2)/2)^2\Gamma((i + 1)/2)}{\Gamma((m + i + 3)/2)\Gamma(1/2)}.
        \end{equation}
        Denoting $a_i \colonequals (i + 2)/2$ and $b_i \colonequals (m + i + 3)/2$ for $i = 0,\ldots,m-1$ and using the fact $\Gamma(1/2) = \sqrt{\pi}$, we can rewrite \eqref{eq:SmallNormProof3} as
        \begin{equation}
            \label{eq:SmallNormProof4}
            \frac{1}{m!} \int_{[0,1]^m} \abs{\Delta(\lambda)} d\lambda = \pi^{-m/2} \prod_{i=1}^m \Gamma(i/2) \prod_{i=0}^{m-1} \frac{\Gamma(a_i)^2}{\Gamma(b_i)}.
        \end{equation}
        It remains to estimate the product $P \colonequals \prod_{i=0}^{m-1} \Gamma(a_i)^2/\Gamma(b_i)$. Using the following double inequality for the Gamma function
        \begin{equation}
            \label{eq:GammaFunctionBounds}
            \sqrt{2\pi} z^{z - 1/2} e^{-z} \leq \Gamma(z) \leq e^{1/12} \sqrt{2\pi} z^{z - 1/2} e^{-z}
        \end{equation}
        that holds for all $z \geq 1$, see Equation 5.6.1 in \cite{Olver10}, we obtain the estimate
        \begin{equation*}
            P \geq (\sqrt{2\pi}e^{-1/12})^m \prod_{i=0}^{m-1} \frac{a_i^{2a_i - 1}}{b_i^{b_i - 1/2}} \exp{b_i - 2a_i} \geq \pi^{m/2} \prod_{i=0}^{m-1} \frac{a_i^{2a_i - 1}}{b_i^{b_i - 1/2}} \exp{b_i - 2a_i}.
        \end{equation*}
        Note that the right-hand side can be rewritten as
        \begin{equation*}
            \pi^{m/2} \exp{\sum_{i=0}^{m-1} \log(a_i)(2a_i - 1) - \log(b_i)(b_i - 1/2) + b_i - 2a_i}.
        \end{equation*}
        We split the sum into three parts
        \begin{equation*}
            S_1 \colonequals \sum_{i=0}^{m-1} \log(a_i)(2a_i - 1), \quad S_2 \colonequals \sum_{i=0}^{m-1} \log(b_i)(b_i - 1/2), \quad S_3 \colonequals \sum_{i=0}^{m-1} b_i - 2a_i.
        \end{equation*}
        For $S_1$ a Riemann sum approximation yields
        \begin{equation*}
            S_1 \gtrsim \int_0^{m-1} \log((x + 2)/2)(1 + x) dx \approx \frac{1}{2}\log(m)m^2 - \frac{1}{2}\log(2)m^2 - \frac{1}{4}m^2.
        \end{equation*}
        Similarly, for $S_2$ a Riemann sum approximation yields
        \begin{equation*}
            S_2 \lesssim \int_0^{m} \log((m + x + 3)/2)(m + x + 2)/2 dx \lesssim \frac{3}{4}\log(m)m^2 + \frac{1}{4}\log(2)m^2 - \frac{3}{8}m^2.
        \end{equation*}
        Furthermore, a straightforward calculation gives
        \begin{equation*}
            S_3 = \sum_{i=0}^{m-1} (m - i - 1)/2 = m(m-1)/4 \approx \frac{1}{4}m^2.
        \end{equation*}
        In total, we have $S_1 - S_2 + S_3 \gtrsim -\frac{1}{4}\log(m)m^2 -\frac{3}{4}\log(2)m^2 + \frac{3}{8}m^2$, and therefore
        \begin{equation}
            \label{eq:SmallNormProof5}
            P \geq \pi^{m/2} \exp{S_1 - S_2 + S_3} \geq \pi^{m/2} \left(\frac{e^{3/4}}{2^{3/2}\sqrt{m}}\right)^{(1 + o(1))m^2/2}.
        \end{equation}
        Combining our bound in \eqref{eq:SmallNormProof5} with \eqref{eq:SmallNormProof4} and recalling \eqref{eq:SmallNormProof3}, we conclude that
        \begin{equation*}
            \int_{D_\delta} \Delta(\lambda) d\lambda \geq \prod_{i=1}^m \Gamma(i/2) \left(\frac{e^{3/4}}{\sqrt{2m}}\delta\right)^{(1 + o(1))m^2/2}.
        \end{equation*}
        Finally, when the above bound is applied to \eqref{eq:SmallNormProof1}, we obtain
        \begin{equation*}
            \P{\norm{X} \geq \delta} \geq C_m \prod_{i=1}^m \Gamma(i/2) \left(\frac{e^{3/4}}{\sqrt{2m}}\delta\right)^{(1 + o(1))m^2/2}\geq \left(\frac{e^{3/4}}{2\sqrt{m}}\delta\right)^{(1 + o(1))m^2/2}.
        \end{equation*}
        Using the lower bound in \eqref{eq:GammaFunctionBounds} and perfoming similar Riemann sum approximations, we get an upper bound of the same order and the assertion follows.
    \end{proof}

    \begin{proof}[Proof of \cref{lem:LaplaceMethod}]
        Fix an arbitrary $\varepsilon > 0$. By Taylor's theorem for any $\delta > 0$ and any $x \in [y - \delta,y + \delta]$, there exists $\xi_n \in [y - \delta,y + \delta]$ such that
        \begin{equation}
            \label{eq:LaplaceProof1}
            \varphi_n(x) = \varphi_n(y) + \varphi_n'(y)(x - y) + \frac{1}{2} \varphi_n''(\xi_n)(x - y)^2.
        \end{equation}
        From the assumption that $y$ is an interior point at which $\varphi_n$ attains a maximum, it follows that $\varphi_n'(y) = 0$. Thus, we can rewrite \eqref{eq:LaplaceProof1} as
        \begin{equation*}
            \varphi_n(x) = \varphi_n(y) + \frac{1}{2} \varphi_n''(\xi_n)(x - y)^2.
        \end{equation*}
        Since $\varphi_n''$ is equicontinuous at $y$, we can control $\abs{\varphi_n''(\xi_n) - \varphi_n''(y)}$ independent of $n$. In particular, we can find $\delta > 0$ such that
        \begin{equation*}
            \frac{(1 - \varepsilon)^2}{2} \varphi_n''(y)(x - y)^2 \leq \varphi_n(x) - \varphi_n(y) \leq \frac{(1 + \varepsilon)^2}{2} \varphi_n''(y)(x - y)^2
        \end{equation*}
        for all $x \in [y - \delta,y + \delta]$ and $n \in \N$. Using the negativity assumption $\varphi''(y) < 0$, we can rewrite the latter expression as
        \begin{equation}
            \label{eq:LaplaceProof2}
            -\frac{(1 - \varepsilon)^2}{2} \abs{\varphi_n''(y)}(x - y)^2 \leq \varphi_n(x) - \varphi_n(y) \leq -\frac{(1 + \varepsilon)^2}{2} \abs{\varphi_n''(y)}(x - y)^2.
        \end{equation}
        Now let us decompose the integral on the left-hand side of \eqref{eq:LaplaceMethod} into two parts
        \begin{equation*}
            \underbrace{\int_{a}^{y - \delta} \exp{n\varphi_n(x)} dx + \int_{y + \delta}^{b} \exp{n\varphi_n(x)} dx}_{\equalscolon I_1} + \underbrace{\int_{y - \delta}^{y + \delta} \exp{n\varphi_n(x)} dx}_{\equalscolon I_2}.
        \end{equation*}
        First, consider the integral $I_1$. Since $\varphi_n$ is assumed to be concave with a unique maximum at $y$, it attains its maximum over $[a,y - \delta] \cup [y + \delta,b]$ at one of the boundary points $y - \delta$ or $y + \delta$. Using the assumption $\varphi_n''(y) \leq c$ and the upper bound in \eqref{eq:LaplaceProof2} yields
        \begin{equation}
            \label{eq:LaplaceProof3}
            I_1 \leq (b - a) \exp{n\varphi_n(y) + n\frac{(1 + \varepsilon)^2}{2}c\delta^2}.
        \end{equation}
        Since $c < 0$ and $\exp{-n}\sqrt{n} = o(1)$, for $n$ large enough holds
        \begin{equation*}
            \eqref{eq:LaplaceProof3} \leq \varepsilon \sqrt{\frac{2\pi}{n\abs{\varphi_n''(y)}}} \exp{n\varphi_n(y)}.
        \end{equation*}
        On the other hand, we have $I_1 \geq 0$ as the integrand is positive. Next, we consider the integral $I_2$. From \eqref{eq:LaplaceProof2} follows the lower bound
        \begin{equation}
            \label{eq:LaplaceProof4}
            I_2 \geq \exp{n\varphi_n(y)} \int_{y - \delta}^{y + \delta} \exp{-n\frac{(1 + \varepsilon)^2}{2}\abs{\varphi_n''(y)}(x - y)^2} dx.
        \end{equation}
        By a change of variables \eqref{eq:LaplaceProof4} becomes
        \begin{equation}
            \label{eq:LaplaceProof5}
            \frac{1}{\sqrt{n}} \exp{n\varphi_n(y)} \int_{-\sqrt{n}\delta}^{\sqrt{n}\delta} \exp{-\frac{(1 + \varepsilon)^2}{2}\abs{\varphi_n''(y)}x^2} dx
        \end{equation}
        and for $n$ sufficiently large holds
        \begin{equation}
            \label{eq:LaplaceProof6}
            \eqref{eq:LaplaceProof5} \geq \frac{1}{\sqrt{n}} \exp{n\varphi_n(y)} (1 - \varepsilon) \int_{-\infty}^{\infty} \exp{-\frac{(1 + \varepsilon)^2}{2}\abs{\varphi_n''(y)}x^2} dx.
        \end{equation}
        The latter is a Gaussian integral and can be evaluated to
        \begin{equation*}
            \eqref{eq:LaplaceProof6} = \sqrt{\frac{2\pi}{n\abs{\varphi_n''(y)}}} \exp{n\varphi_n(y)} \frac{1 - \varepsilon}{1 + \varepsilon}.
        \end{equation*}
        Similarly, from \eqref{eq:LaplaceProof2} follows the upper bound
        \begin{equation}
            \label{eq:LaplaceProof7}
            I_2 \leq \exp{n\varphi_n(y)} \int_{y - \delta}^{y + \delta} \exp{-n\frac{(1 - \varepsilon)^2}{2}\abs{\varphi_n''(y)}(x - y)^2} dx.
        \end{equation}
        A change of variables and using nonnegativity of the integrand yields
        \begin{equation*}
            \eqref{eq:LaplaceProof7} \leq \sqrt{\frac{2\pi}{n\abs{\varphi_n''(y)}}} \exp{n\varphi_n(y)} \frac{1}{1 - \varepsilon}.
        \end{equation*}
        Combining all results shows that $I_1 + I_2$ equals the right-hand side of \eqref{eq:LaplaceMethod} up to a multiplicative factor in $[1 - 2\varepsilon,1 + 4\varepsilon]$ for $\varepsilon$ sufficiently small. Since $\varepsilon > 0$ was chosen arbitrarily, the claim follows.
    \end{proof}

    \begin{proof}[Proof of \cref{lem:WishartConcentration}]
        By definition, $W = GG^T$ for some $m \times r$ Gaussian matrix $G$. In particular, the $r$ non-zero eigenvalues of $W$ coincide with the squared singular values of $G$. Let $\sigma_{\min}(G)$ and $\sigma_{\max}(G)$ denote the smallest and largest singular values of $G$, respectively. Using the Sudakov-Fernique inequality, it can be shown that
        \begin{equation*}
            \sqrt{m} - \sqrt{r} - t \leq \sigma_{\min}(G) \leq \sigma_{\max}(G) \leq \sqrt{m} + \sqrt{r} + t
        \end{equation*}
        with probability at least $1 - 2\exp{-\Omega(t^2)}$, see Corollary 7.3.3 and Exercise 7.3.4 in \cite{Vershynin18} for details. By taking $t = \varepsilon\sqrt{m}$ for some $\varepsilon > 0$ and using the assumption $r \ll m$, we conclude that
        \begin{equation*}
            \sqrt{m}(1 - 2\varepsilon) \leq \sigma_{\min}(G) \leq \sigma_{\max}(G) \leq \sqrt{m}(1 + 2\varepsilon)
        \end{equation*}
        with probability at least $1 - \exp{-\Omega(m)}$. From the identity $\norm{W}_F^2 = \sum_{i=1}^m \lambda_i(W)^2$ follows that
        \begin{equation*}
            rm^2(1 - 2\varepsilon)^4 \leq r\sigma_{\min}(G)^4 \leq \norm{W}_F^2 \leq r\sigma_{\max}(G)^4 \leq rm^2(1 + 2\varepsilon)^4.
        \end{equation*}
        If $\varepsilon$ is sufficiently small, we have that $1/2 \leq (1 - 2\varepsilon)^4 \leq (1 + 2\varepsilon)^4 \leq 3/2$ and therefore
        \begin{equation*}
            \P{\abs{\frac{1}{rm^2}\norm{W}_F^2 - 1} \leq \frac{1}{2}} \geq 1 - 2\exp{-\Omega(m)}.
        \end{equation*}
        For the centered Wishart matrix $\overline{W}$, we can recycle these bounds using the relation $\lambda_i(\overline{W}) = \lambda_i(W) - r$ $\overline{W}$. The pertubation by $r$ is negligible when $r \ll m$.
    \end{proof}

    \begin{proof}[Proof of \cref{lem:WishartNorm}]
        We have to show that for sufficiently small $c > 0$ holds
        \begin{equation*}
            \E\exp{\frac{c}{\sqrt{r}}\abs{\inp{GG^T - rI_m}{Y}}} \leq 2
        \end{equation*}
        for all symmetric matrices $Y \in \R^{m \times m}$ with $\norm{Y}_F = 1$. Let $g_1,\ldots,g_r$ denote the columns of $G$. Substituting $G^TG = \sum_{i=1}^r g_ig_i^T$ gives
        \begin{align}
            \notag
            \E\exp{\frac{c}{\sqrt{r}}\inp{GG^T - rI_m}{Y}} &= \exp{-c\sqrt{r}\tr(Y)} \E\exp{\frac{c}{\sqrt{r}} \sum_{i=1}^r g_i^TYg_i}
            \intertext{and using independence of the columns yields}
            \label{eq:WishartProof1}
            &= \exp{-c\sqrt{r}\tr(Y)} \left(\E\exp{\frac{c}{\sqrt{r}} g^TYg}\right)^r
        \end{align}
        where $g$ denotes an $m$-dimensional Gaussian vector. We compute
        \begin{align*}
            \E\exp{\frac{c}{\sqrt{r}}g^TYg} &= \frac{1}{(2\pi)^{m/2}} \int_{\R^m} \exp{\frac{c}{\sqrt{r}}x^TYx - \frac{1}{2}x^Tx} \\ &= \frac{1}{(2\pi)^{m/2}} \int_{\R^m} \exp{-\frac{1}{2}x^T\left(I_m - \frac{2c}{\sqrt{r}}Y\right)x}
        \end{align*}
        and note that the above integrand matches the density \eqref{eq:MultivariateNormalDistribution} of an $m$-dimensional Gaussian vector with mean $\mu = 0$ and covariance matrix $\Sigma = (I_m - \frac{2c}{\sqrt{r}}Y)^{-1}$ except for the factor $\det(\Sigma)^{-1/2}$. Consequently, we have that
        \begin{equation*}
            \E\exp{\frac{c}{\sqrt{r}}g^TYg} = {\det}\of{I_m - \frac{2c}{\sqrt{r}}Y}^{-1/2}.
        \end{equation*}
        Using the formula $\det(I_m - \frac{2c}{\sqrt{r}}Y) = \prod_{i=1}^m \lambda_i(I_n - \frac{2c}{\sqrt{r}}Y) = \prod_{i=1}^m (1 - \frac{2c}{\sqrt{r}}\lambda_i(Y))$, we find that
        \begin{equation}
            \label{eq:WishartProof2}
            \eqref{eq:WishartProof1} = \exp{-c\sqrt{r}\tr(Y) - \frac{r}{2}\sum_{i=1}^m {\log}\of{1 - \frac{2c}{\sqrt{r}}\lambda_i(Y)}}.
        \end{equation}
        Due to the assumption $\norm{Y} \leq \norm{Y}_F = 1$, for $c \leq 1/4$ holds $\frac{2c}{\sqrt{r}}\lambda_i(Y) \leq 1/2$. Thus, we can apply the inequality $t^2 + t + \log(1-t) \geq 0$ for $0 \leq t \leq 1/2$ to obtain
        \begin{align*}
            \eqref{eq:WishartProof2} &\leq \exp{-c\sqrt{r}\tr(Y) + \frac{r}{2}\sum_{i=1}^m \frac{2c}{\sqrt{r}}\lambda_i(Y) + \frac{4c^2}{r}\lambda_i(Y)^2}
        \intertext{where the first two terms cancel each other out as $\tr(Y) = \sum_{i=1}^n \lambda_i(Y)$, leaving}
            &= \exp{2c^2\norm{Y}_F^2} = \exp{2c^2}.
        \end{align*}
        Similarly, we obtain that
        \begin{equation*}
            \E\exp{-\frac{c}{\sqrt{r}}\inp{GG^T - rI_m}{Y}} \leq \exp{2c^2}
        \end{equation*}
        and using the inequality $\exp{\abs{t}} \leq \frac{2}{3}(\exp{2t} + \exp{-2t})$ for $t \in \R$ provides the desired claim.
    \end{proof}

    \clearpage

    \addcontentsline{toc}{section}{References}
    \bibliographystyle{plain}
    \bibliography{main}

\begin{thebibliography}{10}

\bibitem{Abbe22}
Emmanuel Abbe, Shuangping Li, and Allan Sly.
\newblock Proof of the contiguity conjecture and lognormal limit for the symmetric perceptron.
\newblock In {\em 2021 {IEEE} 62nd {A}nnual {S}ymposium on {F}oundations of {C}omputer {S}cience---{FOCS} 2021}, pages 327--338. IEEE Computer Soc., Los Alamitos, CA, 2022.

\bibitem{Adamczak11}
R.~Adamczak, A.~E. Litvak, A.~Pajor, and N.~Tomczak-Jaegermann.
\newblock Restricted isometry property of matrices with independent columns and neighborly polytopes by random sampling.
\newblock {\em Constr. Approx.}, 34(1):61--88, 2011.

\bibitem{Alon16}
Noga Alon and Joel~H. Spencer.
\newblock {\em The probabilistic method}.
\newblock Wiley Series in Discrete Mathematics and Optimization. John Wiley \& Sons, Inc., Hoboken, NJ, fourth edition, 2016.

\bibitem{Altschuler23}
Dylan~J. Altschuler.
\newblock Critical window of the symmetric perceptron.
\newblock {\em Electron. J. Probab.}, 28:Paper No. 123, 28, 2023.

\bibitem{Altschuler24}
Dylan~J. Altschuler.
\newblock Zero-one laws for random feasibility problems, 2024.

\bibitem{Altschuler22}
Dylan~J. Altschuler and Jonathan Niles-Weed.
\newblock The discrepancy of random rectangular matrices.
\newblock {\em Random Structures Algorithms}, 60(4):551--593, 2022.

\bibitem{Alweiss21}
Ryan Alweiss, Yang~P. Liu, and Mehtaab Sawhney.
\newblock Discrepancy minimization via a self-balancing walk.
\newblock In {\em S{TOC} '21---{P}roceedings of the 53rd {A}nnual {ACM} {SIGACT} {S}ymposium on {T}heory of {C}omputing}, pages 14--20. ACM, New York, 2021.

\bibitem{Anderson10}
Greg~W. Anderson, Alice Guionnet, and Ofer Zeitouni.
\newblock {\em An introduction to random matrices}, volume 118 of {\em Cambridge Studies in Advanced Mathematics}.
\newblock Cambridge University Press, Cambridge, 2010.

\bibitem{Aubin19}
Benjamin Aubin, Will Perkins, and Lenka Zdeborov\'a.
\newblock Storage capacity in symmetric binary perceptrons.
\newblock {\em J. Phys. A}, 52(29):294003, 32, 2019.

\bibitem{Banaszczyk98}
Wojciech Banaszczyk.
\newblock Balancing vectors and {G}aussian measures of {$n$}-dimensional convex bodies.
\newblock {\em Random Structures Algorithms}, 12(4):351--360, 1998.

\bibitem{Banaszczyk12}
Wojciech Banaszczyk.
\newblock On series of signed vectors and their rearrangements.
\newblock {\em Random Structures Algorithms}, 40(3):301--316, 2012.

\bibitem{Bansal10}
Nikhil Bansal.
\newblock Constructive algorithms for discrepancy minimization.
\newblock In {\em 2010 {IEEE} 51st {A}nnual {S}ymposium on {F}oundations of {C}omputer {S}cience---{FOCS} 2010}, pages 3--10. IEEE Computer Soc., Los Alamitos, CA, 2010.

\bibitem{Bansal14}
Nikhil Bansal, Moses Charikar, Ravishankar Krishnaswamy, and Shi Li.
\newblock Better algorithms and hardness for broadcast scheduling via a discrepancy approach.
\newblock In {\em Proceedings of the {T}wenty-{F}ifth {A}nnual {ACM}-{SIAM} {S}ymposium on {D}iscrete {A}lgorithms}, pages 55--71. ACM, New York, 2014.

\bibitem{Bansal19b}
Nikhil Bansal, Daniel Dadush, and Shashwat Garg.
\newblock An algorithm for {K}oml\'os conjecture matching {B}anaszczyk's bound.
\newblock {\em SIAM J. Comput.}, 48(2):534--553, 2019.

\bibitem{Bansal19a}
Nikhil Bansal, Daniel Dadush, Shashwat Garg, and Shachar Lovett.
\newblock The {G}ram-{S}chmidt walk: a cure for the {B}anaszczyk blues.
\newblock {\em Theory Comput.}, 15:Paper No. 21, 27, 2019.

\bibitem{Bansal23}
Nikhil Bansal, Haotian Jiang, and Raghu Meka.
\newblock Resolving matrix {S}pencer conjecture up to poly-logarithmic rank.
\newblock In {\em S{TOC}'23---{P}roceedings of the 55th {A}nnual {ACM} {S}ymposium on {T}heory of {C}omputing}, pages 1814--1819. ACM, New York, 2023.

\bibitem{Bansal21}
Nikhil Bansal, Haotian Jiang, Raghu Meka, Sahil Singla, and Makrand Sinha.
\newblock Online discrepancy minimization for stochastic arrivals.
\newblock In {\em Proceedings of the 2021 {ACM}-{SIAM} {S}ymposium on {D}iscrete {A}lgorithms ({SODA})}, pages 2842--2861. [Society for Industrial and Applied Mathematics (SIAM)], Philadelphia, PA, 2021.

\bibitem{Bansal22b}
Nikhil Bansal, Haotian Jiang, Raghu Meka, Sahil Singla, and Makrand Sinha.
\newblock Prefix discrepancy, smoothed analysis, and combinatorial vector balancing.
\newblock In {\em 13th {I}nnovations in {T}heoretical {C}omputer {S}cience {C}onference}, volume 215 of {\em LIPIcs. Leibniz Int. Proc. Inform.}, pages Art. No. 13, 22. Schloss Dagstuhl. Leibniz-Zent. Inform., Wadern, 2022.

\bibitem{Bansal22c}
Nikhil Bansal, Haotian Jiang, Raghu Meka, Sahil Singla, and Makrand Sinha.
\newblock Smoothed analysis of the {K}oml\'os conjecture.
\newblock In {\em 49th {EATCS} {I}nternational {C}onference on {A}utomata, {L}anguages, and {P}rogramming}, volume 229 of {\em LIPIcs. Leibniz Int. Proc. Inform.}, pages Art. No. 14, 12. Schloss Dagstuhl. Leibniz-Zent. Inform., Wadern, 2022.

\bibitem{Bansal20b}
Nikhil Bansal, Haotian Jiang, Sahil Singla, and Makrand Sinha.
\newblock Online vector balancing and geometric discrepancy.
\newblock In {\em S{TOC} '20---{P}roceedings of the 52nd {A}nnual {ACM} {SIGACT} {S}ymposium on {T}heory of {C}omputing}, pages 1139--1152. ACM, New York, 2020.

\bibitem{Bansal22a}
Nikhil Bansal, Aditi Laddha, and Santosh Vempala.
\newblock A unified approach to discrepancy minimization.
\newblock In {\em Approximation, randomization, and combinatorial optimization. {A}lgorithms and techniques}, volume 245 of {\em LIPIcs. Leibniz Int. Proc. Inform.}, pages Art. No. 1, 22. Schloss Dagstuhl. Leibniz-Zent. Inform., Wadern, 2022.

\bibitem{Bansal20a}
Nikhil Bansal and Joel~H. Spencer.
\newblock On-line balancing of random inputs.
\newblock {\em Random Structures Algorithms}, 57(4):879--891, 2020.

\bibitem{Barany81}
Imre B\'ar\'any.
\newblock A vector-sum theorem and its application to improving flow shop guarantees.
\newblock {\em Math. Oper. Res.}, 6(3):445--452, 1981.

\bibitem{Beck81}
J\'ozsef Beck and Tibor Fiala.
\newblock ``{I}nteger-making''\ theorems.
\newblock {\em Discrete Appl. Math.}, 3(1):1--8, 1981.

\bibitem{BenArous97}
G\'{e}rard Ben~Arous and Alice Guionnet.
\newblock Large deviations for {W}igner's law and {V}oiculescu's non-commutative entropy.
\newblock {\em Probab. Theory Related Fields}, 108(4):517--542, 1997.

\bibitem{Boyd04}
Stephen Boyd and Lieven Vandenberghe.
\newblock {\em Convex optimization}.
\newblock Cambridge University Press, Cambridge, 2004.

\bibitem{Vempala14}
Karthekeyan Chandrasekaran and Santosh~S. Vempala.
\newblock Integer feasibility of random polytopes.
\newblock In {\em I{TCS}'14---{P}roceedings of the 2014 {C}onference on {I}nnovations in {T}heoretical {C}omputer {S}cience}, pages 449--458. ACM, New York, 2014.

\bibitem{Chazelle00}
Bernard Chazelle.
\newblock {\em The discrepancy method}.
\newblock Cambridge University Press, Cambridge, 2000.
\newblock Randomness and complexity.

\bibitem{Costello09}
Kevin~P. Costello.
\newblock Balancing {G}aussian vectors.
\newblock {\em Israel J. Math.}, 172:145--156, 2009.

\bibitem{Dadush16}
Daniel Dadush, Shashwat Garg, Shachar Lovett, and Aleksandar Nikolov.
\newblock Towards a constructive version of {B}anaszczyk's vector balancing theorem.
\newblock In {\em Approximation, randomization, and combinatorial optimization. {A}lgorithms and techniques}, volume~60 of {\em LIPIcs. Leibniz Int. Proc. Inform.}, pages Art. No. 28, 12. Schloss Dagstuhl. Leibniz-Zent. Inform., Wadern, 2016.

\bibitem{Dadush22}
Daniel Dadush, Haotian Jiang, and Victor Reis.
\newblock A new framework for matrix discrepancy: partial coloring bounds via mirror descent.
\newblock In {\em S{TOC} '22---{P}roceedings of the 54th {A}nnual {ACM} {SIGACT} {S}ymposium on {T}heory of {C}omputing}, pages 649--658. ACM, New York, 2022.

\bibitem{Bruijn81}
Nicolaas~Govert de~Bruijn.
\newblock {\em Asymptotic methods in analysis}.
\newblock Dover Publications, Inc., New York, third edition, 1981.

\bibitem{Eisenbrand18}
Friedrich Eisenbrand and Robert Weismantel.
\newblock Proximity results and faster algorithms for integer programming using the {S}teinitz lemma.
\newblock In {\em Proceedings of the {T}wenty-{N}inth {A}nnual {ACM}-{SIAM} {S}ymposium on {D}iscrete {A}lgorithms}, pages 808--816. SIAM, Philadelphia, PA, 2018.

\bibitem{Franks20}
Cole Franks and Michael Saks.
\newblock On the discrepancy of random matrices with many columns.
\newblock {\em Random Structures Algorithms}, 57(1):64--96, 2020.

\bibitem{Gamarnik23}
David Gamarnik, Eren~C. Kızıldağ, Will Perkins, and Changji Xu.
\newblock Geometric barriers for stable and online algorithms for discrepancy minimization, 2023.

\bibitem{Giannopoulos97}
Apostolos~A. Giannopoulos.
\newblock On some vector balancing problems.
\newblock {\em Studia Math.}, 122(3):225--234, 1997.

\bibitem{Gluskin88}
Efim~Davydovich Gluskin.
\newblock Extremal properties of orthogonal parallelepipeds and their applications to the geometry of {B}anach spaces.
\newblock {\em Mat. Sb. (N.S.)}, 136(178)(1):85--96, 1988.

\bibitem{Gopalan10}
Parikshit Gopalan, Ryan O'Donnell, Yi~Wu, and David Zuckerman.
\newblock Fooling functions of halfspaces under product distributions.
\newblock In {\em 25th {A}nnual {IEEE} {C}onference on {C}omputational {C}omplexity---{CCC} 2010}, pages 223--234. IEEE Computer Soc., Los Alamitos, CA, 2010.

\bibitem{Harshaw24}
Christopher Harshaw, Fredrik S\"avje, Daniel~A. Spielman, and Peng Zhang.
\newblock Balancing covariates in randomized experiments with the {G}ram-{S}chmidt walk design.
\newblock {\em J. Amer. Statist. Assoc.}, 119(548):2934--2946, 2024.

\bibitem{Hoberg17}
Rebecca Hoberg and Thomas Rothvoss.
\newblock A logarithmic additive integrality gap for bin packing.
\newblock In {\em Proceedings of the {T}wenty-{E}ighth {A}nnual {ACM}-{SIAM} {S}ymposium on {D}iscrete {A}lgorithms}, pages 2616--2625. SIAM, Philadelphia, PA, 2017.

\bibitem{Hoberg19}
Rebecca Hoberg and Thomas Rothvoss.
\newblock A {F}ourier-analytic approach for the discrepancy of random set systems.
\newblock In {\em Proceedings of the {T}hirtieth {A}nnual {ACM}-{SIAM} {S}ymposium on {D}iscrete {A}lgorithms}, pages 2547--2556. SIAM, Philadelphia, PA, 2019.

\bibitem{Hopkins22}
Samuel~B. Hopkins, Prasad Raghavendra, and Abhishek Shetty.
\newblock Matrix discrepancy from quantum communication.
\newblock In {\em S{TOC} '22---{P}roceedings of the 54th {A}nnual {ACM} {SIGACT} {S}ymposium on {T}heory of {C}omputing}, pages 637--648. ACM, New York, 2022.

\bibitem{Jansen19}
Klaus Jansen and Lars Rohwedder.
\newblock On integer programming and convolution.
\newblock In {\em 10th {I}nnovations in {T}heoretical {C}omputer {S}cience}, volume 124 of {\em LIPIcs. Leibniz Int. Proc. Inform.}, pages Art. No. 43, 17. Schloss Dagstuhl. Leibniz-Zent. Inform., Wadern, 2019.

\bibitem{Karmarkar86}
Narendra Karmarkar, Richard~M. Karp, George~S. Lueker, and Andrew~M. Odlyzko.
\newblock Probabilistic analysis of optimum partitioning.
\newblock {\em J. Appl. Probab.}, 23(3):626--645, 1986.

\bibitem{Kulkarni24}
Janardhan Kulkarni, Victor Reis, and Thomas Rothvoss.
\newblock Optimal online discrepancy minimization.
\newblock In {\em S{TOC}'24---{P}roceedings of the 56th {A}nnual {ACM} {S}ymposium on {T}heory of {C}omputing}, pages 1832--1840. ACM, New York, 2024.

\bibitem{Latala05}
Rafa{\l} Lata{\l}a.
\newblock Some estimates of norms of random matrices.
\newblock {\em Proc. Amer. Math. Soc.}, 133(5):1273--1282, 2005.

\bibitem{Liu22}
Yang~P. Liu, Ashwin Sah, and Mehtaab Sawhney.
\newblock A {G}aussian fixed point random walk.
\newblock In {\em 13th {I}nnovations in {T}heoretical {C}omputer {S}cience {C}onference}, volume 215 of {\em LIPIcs. Leibniz Int. Proc. Inform.}, pages Art. No. 101, 10. Schloss Dagstuhl. Leibniz-Zent. Inform., Wadern, 2022.

\bibitem{Lovett10}
Shachar Lovett.
\newblock An elementary proof of anti-concentration of polynomials in {Gaussian} variables.
\newblock In {\em Electron. Colloquium Comput. Complex.}, volume~17, page 182, 2010.

\bibitem{Lovett15}
Shachar Lovett and Raghu Meka.
\newblock Constructive discrepancy minimization by walking on the edges.
\newblock {\em SIAM J. Comput.}, 44(5):1573--1582, 2015.

\bibitem{Lust91}
Fran\c{c}oise Lust-Piquard and Gilles Pisier.
\newblock Noncommutative {K}hintchine and {P}aley inequalities.
\newblock {\em Ark. Mat.}, 29(2):241--260, 1991.

\bibitem{Maillard24}
Antoine Maillard.
\newblock Average-case matrix discrepancy: satisfiability bounds, 2024.

\bibitem{Marcus15}
Adam~W. Marcus, Daniel~A. Spielman, and Nikhil Srivastava.
\newblock Interlacing families {II}: {M}ixed characteristic polynomials and the {K}adison-{S}inger problem.
\newblock {\em Ann. of Math. (2)}, 182(1):327--350, 2015.

\bibitem{Matousek10}
Ji\v{r}\'{i} Matou\v{s}ek.
\newblock {\em Geometric discrepancy}, volume~18 of {\em Algorithms and Combinatorics}.
\newblock Springer-Verlag, Berlin, 2010.
\newblock An illustrated guide, Revised paperback reprint of the 1999 original.

\bibitem{Meckes06}
Elizabeth Meckes.
\newblock {\em An infinitesimal version of Stein’s method of exchangeable pairs}.
\newblock PhD thesis, Stanford University, 2006.

\bibitem{Nikolov17}
Aleksandar Nikolov.
\newblock Tighter bounds for the discrepancy of boxes and polytopes.
\newblock {\em Mathematika}, 63(3):1091--1113, 2017.

\bibitem{Olver10}
Frank W.~J. Olver, Daniel~W. Lozier, Ronald~F. Boisvert, and Charles~W. Clark, editors.
\newblock {\em N{IST} handbook of mathematical functions}.
\newblock U.S. Department of Commerce, National Institute of Standards and Technology, Washington, DC; Cambridge University Press, Cambridge, 2010.

\bibitem{Paley32}
Raymond Paley and Antoni Zygmund.
\newblock A note on analytic functions in the unit circle.
\newblock {\em Mathematical Proceedings of the Cambridge Philosophical Society}, 28(3), 1932.

\bibitem{Perkins21}
Will Perkins and Changji Xu.
\newblock Frozen 1-{RSB} structure of the symmetric {I}sing perceptron.
\newblock In {\em S{TOC} '21---{P}roceedings of the 53rd {A}nnual {ACM} {SIGACT} {S}ymposium on {T}heory of {C}omputing}, pages 1579--1588. ACM, New York, 2021.

\bibitem{Potukuchi18}
Aditya Potukuchi.
\newblock Discrepancy in random hypergraph models, 2018.
\newblock arXiv:1811.01491.

\bibitem{Reis20}
Victor Reis and Thomas Rothvoss.
\newblock Linear size sparsifier and the geometry of the operator norm ball.
\newblock In {\em Proceedings of the 2020 {ACM}-{SIAM} {S}ymposium on {D}iscrete {A}lgorithms}, pages 2337--2348. SIAM, Philadelphia, PA, 2020.

\bibitem{Rothvoss17}
Thomas Rothvoss.
\newblock Constructive discrepancy minimization for convex sets.
\newblock {\em SIAM J. Comput.}, 46(1):224--234, 2017.

\bibitem{Sah23}
Ashwin Sah and Mehtaab Sawhney.
\newblock Distribution of the threshold for the symmetric perceptron.
\newblock In {\em 2023 {IEEE} 64th {A}nnual {S}ymposium on {F}oundations of {C}omputer {S}cience---{FOCS} 2023}, pages 2369--2382. IEEE Computer Soc., Los Alamitos, CA, 2023.

\bibitem{Sevastjanov94}
Sergey~Vasil'evich Sevast'janov.
\newblock On some geometric methods in scheduling theory: a survey.
\newblock {\em Discrete Appl. Math.}, 55(1):59--82, 1994.

\bibitem{Soshnikov99}
Alexander Soshnikov.
\newblock Universality at the edge of the spectrum in {W}igner random matrices.
\newblock {\em Comm. Math. Phys.}, 207(3):697--733, 1999.

\bibitem{Spencer77}
Joel Spencer.
\newblock Balancing games.
\newblock {\em J. Combinatorial Theory Ser. B}, 23(1):68--74, 1977.

\bibitem{Spencer85}
Joel Spencer.
\newblock Six standard deviations suffice.
\newblock {\em Trans. Amer. Math. Soc.}, 289(2):679--706, 1985.

\bibitem{Spencer94}
Joel Spencer.
\newblock {\em Ten lectures on the probabilistic method}, volume~64 of {\em CBMS-NSF Regional Conference Series in Applied Mathematics}.
\newblock Society for Industrial and Applied Mathematics (SIAM), Philadelphia, PA, second edition, 1994.

\bibitem{Tao12}
Terence Tao.
\newblock {\em Topics in random matrix theory}, volume 132 of {\em Graduate Studies in Mathematics}.
\newblock American Mathematical Society, Providence, RI, 2012.

\bibitem{Tropp12}
Joel~A. Tropp.
\newblock User-friendly tail bounds for sums of random matrices.
\newblock {\em Found. Comput. Math.}, 12(4):389--434, 2012.

\bibitem{Tsuda05}
Koji Tsuda, Gunnar R\"atsch, and Manfred~K. Warmuth.
\newblock Matrix exponentiated gradient updates for on-line learning and {B}regman projection.
\newblock {\em J. Mach. Learn. Res.}, 6:995--1018, 2005.

\bibitem{Turner20}
Paxton Turner, Raghu Meka, and Philippe Rigollet.
\newblock Balancing gaussian vectors in high dimension.
\newblock In Jacob Abernethy and Shivani Agarwal, editors, {\em Proceedings of Thirty Third Conference on Learning Theory}, volume 125 of {\em Proceedings of Machine Learning Research}, pages 3455--3486. PMLR, 2020.

\bibitem{Vershynin18}
Roman Vershynin.
\newblock {\em High-dimensional probability}, volume~47 of {\em Cambridge Series in Statistical and Probabilistic Mathematics}.
\newblock Cambridge University Press, Cambridge, 2018.

\bibitem{Zouzias12}
Anastasios Zouzias.
\newblock A matrix hyperbolic cosine algorithm and applications.
\newblock In {\em Automata, languages, and programming. {P}art {I}}, volume 7391 of {\em Lecture Notes in Comput. Sci.}, pages 846--858. Springer, Heidelberg, 2012.

\end{thebibliography}
\end{document}